\newtheorem{theorem}{Theorem}[section]
\newtheorem{lemma}[theorem]{Lemma}
\newtheorem{proposition}[theorem]{Proposition}
\theoremstyle{definition}
\newtheorem{definition}[theorem]{Definition}
\newtheorem{example}[theorem]{Example}
\newtheorem{corollary}[theorem]{Corollary}
\theoremstyle{remark}
\newtheorem{remark}[theorem]{Remark}
\DeclareMathOperator\diag{diag}
\numberwithin{equation}{section}
\begin{document}

\title[Spectral Tetris]{The fundamentals of Spectral Tetris frame constructions}

\author{Peter G. Casazza and Lindsey M. Woodland}
\address{Department of Mathematics, University of Missouri, Columbia,
MO 65211}
\curraddr{}
\email{casazzap@missouri.edu; lmwvh4@mail.missouri.edu}
\thanks{The authors were supported by:  NSF DMS 1008183;  NSF ATD 1042701; AFOSR DGE51: FA9550-11-1-0245}

\subjclass[2000]{Primary }

\subjclass[2010]{Primary 42C15}

\date{}

\begin{abstract}
In a landmark paper \cite{CFMWZ09}, Casazza, Fickus, Mixon, Wang and Zhou
introduced a fundamental method for constructing unit norm tight frames,
which they called {\it Spectral Tetris}.  This was a significant advancement for finite frame theory - especially constructions of finite
frames.  This paper then generated a vast amount of literature as Spectral Tetris was steadily developed, refined, and generalized until today we have a complete picture of what are the broad applications as well as the limitations of Spectral Tetris.  In this paper, we will put this vast body
of literature into a coherent theory. 
\end{abstract}

\maketitle

\section{Introduction}

Hilbert space frames were introduced by Duffin and Schaeffer in \cite{DS}
while studying deep questions in non-harmonic Fourier series.  Today they
have broad application to problems in pure mathematics, applied mathematics,
engineering, medicine and much more.  A fundamental problem for applications
of frames is to construct frames with the necessary properties for the application.
This can often be very difficult if not impossible in practice.

In a landmark paper, \cite{CFMWZ09}, in 2009 Casazza, Fickus, Mixon, Wang and Zhou
introduced a fundamental technique for the construction of unit norm tight frames
which they called {\it Spectral Tetris}.  This technique was a significant advancement for
finite frame theory since prior to that we had no broad general methods for constructing
finite frames.  In some specific cases, ad-hoc techniques were developed to produce
frames for certain applications, but in most cases the theory relied on {\it existence
proofs} for knowing that required frames existed.  This at least allowed researchers
to spend their time trying to produce the needed frames for applications with the
knowledge that they existed.  But the ad-hoc methods were time consuming and
did not produce any new very general classes of finite frames for applications.

It was immediately clear that the paper \cite{CFMWZ09} was just the beginning of something with much broader applications to the construction of finite frames.
This caused a flurry of activity around Spectral Tetris as it was steadily developed, refined, and generalized until today we have a complete picture of what finite frames and fusion frames it can and cannot produce.  It was known right from the beginning that Spectral Tetris cannot construct all unit norm tight frames. But today, we do have necessary and sufficient conditions for Spectral Tetris to work in a broad variety of generalizations and situations.  In this paper we will put this vast quantity of literature into a coherent theory so that researchers will be able to quickly tell if these methods will work for the problems they are working on.

The outline of the present paper is as follows. In Section (2) and Section (3) we provide necessary background information on Hilbert space frames and fusion frames, respectively. In Section (4) we discuss the construction techniques available before Spectral Tetris was discovered. Section (5) provides the basics of Spectral Tetris and the original Spectral Tetris construction method for unit norm, tight frames with eigenvalues greater than or equal to two. Spectral Tetris is then adapted in section (6) to construct unit norm, tight frames with positive eigenvalues. Next, we generalize Spectral Tetris in Section (7) to allow for the construction of non-tight frames. We conclude our Spectral Tetris construction of frames in Section (8) by providing a generalized Spectral Tetris construction method for frames and necessary and sufficient conditions for when this method is applicable. Next, we consider construction methods for fusion frames and in Section (9) we give a brief introduction to Spectral Tetris fusion frame constructions. Then in Section (10), we see how the original Spectral Tetris construction method is generalized to construct 2-sparse, equidimensional, unit-weighted fusion frames. This method is further generalized in Section (11) to construct unit weighted Spectral Tetris fusion frames. Our final construction method for fusion frames occurs in Section (12) where we provide a generalized Spectral Tetris fusion frame construction algorithm as well as necessary and sufficient conditions for when this method is applicable. Lastly, in Section (13) we give concluding remarks.


\section{Hilbert Space Frames}

We now introduce the basics of finite frame theory.

\begin{definition}
A family of vectors $\{f_n\}_{n=1}^N$ in an $M$-dimensional Hilbert space $\mathcal{H}_M$ is a frame if there are constants $0 < A \leq B < \infty$ so that for all $x \in \mathcal{H}_M$,
\[
A \| x \|^2 \leq \sum_{n=1}^N |\langle x,f_n\rangle|^2 \leq B \|x\|^2,
\]
where $A$ and $B$ are the lower and upper frame bounds, respectively.

\begin{enumerate}
\item In the finite dimensional setting, a frame is simply a spanning set of vectors in the Hilbert space.

\item The {\it optimal lower frame bound and optimal upper frame bound}, denoted $A_{op}$ and $B_{op}$ respectively, are the largest lower frame bound and the smallest upper frame bound, respectively.

\item If $A = B$ is possible, then $\{f_n\}_{n=1}^N$ is a {\it tight frame}. Moreover, if $A = B = 1$ is possible, then $\{f_n\}_{n=1}^N$ is a {\it Parseval frame}.

\item If there is a constant $c$ so that $\|f_n\| = c$ for all $n = 1,\ldots,N$ then $\{f_n\}_{n=1}^N$ is an {\it equal norm frame}. Moreover, if $c = 1$ then $\{f_n\}_{n=1}^N$ is a {\it unit norm frame}.  

\item $\{\langle x,f_n\rangle \}_{n = 1}^N$ are called the {\it frame coefficients} of the vector $x\in \mathcal{H}_M$ with respect to frame $\{f_n\}_{n=1}^N$.

\item We will refer to a unit norm, tight frame as a UNTF.

\end{enumerate}
\end{definition}

If $\{f_n\}_{n=1}^N$ is a frame for $\mathcal{H}_M$, then the {\it analysis operator} of the frame is the operator $T : \mathcal{H}_M \to \ell_2(N)$ given by
\[
T(x) = \{ \langle x,f_n\rangle \}_{n=1}^N
\]
and the {\it synthesis operator} is the adjoint operator, $T^*$, which satisfies
\[
T^{*}\left( \{a_n\}_{n=1}^N \right) = \sum_{n=1}^N a_n f_n.
\]
The {\it frame operator} is the positive, self-adjoint, invertible operator $S = T^* T$ on $\mathcal{H}_M$ and satisfies
\[
S(x) = T^*T(x) = \sum_{n=1}^N \langle x,f_n\rangle f_n.
\]

That is, $\{f_n\}_{n=1}^N$ is a frame if and only if there are constants $0<A\leq B<\infty$ such that its frame operator $S$ satisfies $AI \leq S \leq BI$ where $I$ is the identity on $\mathcal{H}_M$.

We say that a frame has a certain spectrum or certain eigenvalues if its frame operator $S$ has this spectrum or respectively these eigenvalues. Note that the spectrum of a frame operator $S$ is positive and real. Also, the smallest and largest eigenvalues of a frame operator $S$ coincide with the optimal lower and upper frame bounds, respectively. For any frame with spectrum $\{\lambda_m\}_{m=1}^M$, the sum of its eigenvalues, counting multiplicities, equals the sum of the squares of the norms of its vectors:
\[\sum_{m=1}^M\lambda_m = \sum_{n=1}^N ||f_n||^2.\]
This quantity will be exactly the number of vectors $N$ when we work with unit norm frames. 

\begin{corollary}
If $\{f_n\}_{n=1}^N$ is a UNTF for $\mathcal{H}_M$ then the frame bound will be $c=\frac{N}{M}$.
\end{corollary}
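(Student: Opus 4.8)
The plan is to compare the trace of the frame operator computed in two ways, using the trace identity recorded just before the statement. First I would observe that since $\{f_n\}_{n=1}^N$ is tight, its frame operator $S$ satisfies $S = cI$ on $\mathcal{H}_M$; indeed, the optimal lower and upper frame bounds coincide with the smallest and largest eigenvalues of $S$, so if $A = B = c$ then every eigenvalue of $S$ equals $c$, and a positive self-adjoint operator all of whose eigenvalues equal $c$ is exactly $cI$. Consequently the spectrum of $S$, counted with multiplicity, is $\{\lambda_m\}_{m=1}^M$ with $\lambda_m = c$ for each $m$, and hence $\sum_{m=1}^M \lambda_m = Mc$.

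Next I would invoke the identity stated in the excerpt, namely $\sum_{m=1}^M \lambda_m = \sum_{n=1}^N \|f_n\|^2$, together with the unit norm hypothesis $\|f_n\| = 1$ for all $n$, which gives $\sum_{n=1}^N \|f_n\|^2 = N$. Combining the two computations of $\sum_{m=1}^M \lambda_m$ yields $Mc = N$, and since $M \neq 0$ we conclude $c = \tfrac{N}{M}$, as claimed.

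There is essentially no obstacle here: the only point requiring a moment's care is justifying that a tight frame has frame operator exactly $cI$ (as opposed to merely $cI \leq S \leq cI$, which of course forces the same conclusion), and that the scalar trace identity applies verbatim in this finite-dimensional setting. Both are immediate from the background material already assembled above, so the argument is a one-line trace count once these observations are in place.
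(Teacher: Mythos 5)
Your argument is correct and is exactly the justification the paper intends: the corollary is stated immediately after the trace identity $\sum_{m=1}^M \lambda_m = \sum_{n=1}^N \|f_n\|^2$ and the remark that this quantity equals $N$ for unit norm frames, so the proof is the same trace count $Mc = N$ that you carry out. Your extra sentence justifying that tightness forces $S = cI$ (hence every eigenvalue equals $c$) is a reasonable bit of added care and introduces no discrepancy with the paper.
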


To each frame we can associate the matrix of its synthesis operator, where the columns correspond to the frame vectors represented against an orthonormal basis for $\mathcal{H}_M$. Note, that any $M \times N$ matrix with $N \geq M$ and which has rank $M$, could be representative of the synthesis matrix of a frame. However, this arbitrary matrix may not have many helpful properties in applications since its only property is that it spans $\mathcal{H}_M$. As we will see in the next theorem, if we represent the frame vectors against the eigenbasis of its frame operator $S$ then the synthesis matrix will possess some very nice properties. 

\begin{theorem}\label{thm2.3}\cite{CK}
Let $T : \mathcal{H}_M \to \ell_2(N)$ be a linear operator, let $\{e_m\}_{m=1}^M$ be an orthonormal basis for $\mathcal{H}_M$, and let $\{\lambda_m\}_{m=1}^M$ be a sequence of positive numbers. Let $A$ denote the $M \times N$ matrix representation of $T^{*}$ with respect to $\{e_m\}_{m=1}^M$ and the standard basis $\{\hat{e}_n \}_{n=1}^N$ of $\ell_2(N)$. Then the following conditions are equivalent.
\begin{enumerate}
\item $\{T^* \hat{e}_n \}_{n=1}^N$ forms a frame for $\mathcal{H}_M$ whose frame operator has eigenvectors $\{e_m\}_{m=1}^M$ and associated eigenvalues $\{\lambda_m\}_{m=1}^M$.
\item The rows of $A$ are orthogonal and the $m$-th row square sums to $\lambda_m$.
\item The columns of $A$ form a frame for $\ell_2(N)$ and $A A^* = \diag(\lambda_1,\ldots,\lambda_M)$, where $AA^*$ represents the frame operator and ``diag" is the diagonal operator with diagonal values $\{\lambda_m\}_{m=1}^M$.
\end{enumerate}
\end{theorem}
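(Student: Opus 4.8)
The plan is to prove the equivalence by establishing the cycle $(1)\Rightarrow(2)\Rightarrow(3)\Rightarrow(1)$, translating everything into statements about the matrix $A$ and its entries. Write $A = [a_{mn}]$, so that the $n$-th column of $A$ is the coordinate vector of $f_n := T^*\hat e_n$ with respect to $\{e_m\}$, i.e. $f_n = \sum_{m=1}^M a_{mn} e_m$. The first thing I would record is the bookkeeping identity that expresses the frame operator $S = T^*T$ in coordinates: for each pair $m, m'$, the $(m,m')$ entry of the matrix of $S$ in the basis $\{e_m\}$ equals $\langle S e_{m'}, e_m\rangle = \sum_{n=1}^N \langle e_{m'}, f_n\rangle\langle f_n, e_m\rangle = \sum_{n=1}^N a_{mn}\overline{a_{m'n}}$, which is precisely the $(m,m')$ entry of $AA^*$. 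Thus the matrix of $S$ with respect to $\{e_m\}$ is exactly $AA^*$; this single observation does most of the work.

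For $(1)\Rightarrow(2)$: if $\{f_n\}$ is a frame whose frame operator has the $e_m$ as eigenvectors with eigenvalues $\lambda_m$, then the matrix of $S$ in the basis $\{e_m\}$ is $\operatorname{diag}(\lambda_1,\dots,\lambda_M)$. By the identity above this says $\sum_n a_{mn}\overline{a_{m'n}} = \lambda_m\delta_{mm'}$, i.e. distinct rows of $A$ are orthogonal and the $m$-th row has square-sum $\lambda_m$; that is exactly $(2)$. For $(2)\Rightarrow(3)$: the hypothesis of $(2)$ is literally the statement $AA^* = \operatorname{diag}(\lambda_1,\dots,\lambda_M)$ entrywise. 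Since all $\lambda_m > 0$, this matrix is invertible, so $A$ has rank $M$; hence the columns of $A$ span $\ell_2(M)$ — wait, one must be careful here about which space the columns frame. The columns of $A$ are vectors in $\ell_2(M)$ (they have $M$ coordinates), and $A$ has full row rank $M$, so its columns span $\ell_2(M)$ and thus form a frame for $\ell_2(M)$; I would state the conclusion of $(3)$ in those terms, noting $AA^*$ is the frame operator of that column system. For $(3)\Rightarrow(1)$: given $AA^* = \operatorname{diag}(\lambda_1,\dots,\lambda_M)$ with $\lambda_m>0$, run the coordinate identity backwards — the matrix of $S$ in the basis $\{e_m\}$ is $AA^*$, which is diagonal with positive entries, so each $e_m$ is an eigenvector of $S$ with eigenvalue $\lambda_m$; positivity and invertibility of $\operatorname{diag}(\lambda_m)$ give $AI \le S \le BI$ with $A = \min_m \lambda_m > 0$ and $B = \max_m \lambda_m$, so $\{f_n\}$ is a frame and $(1)$ holds.

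The only genuinely delicate point — and the one I would be most careful about — is the indexing/codomain mismatch in clause $(3)$ as quoted: the columns of $A$ live in $\ell_2(M)$, not in $\ell_2(N)$, and $AA^*$ is an $M\times M$ matrix, so the phrase ``frame for $\ell_2(N)$'' should be read as ``frame for the $M$-dimensional coordinate space,'' consistent with $AA^* = \operatorname{diag}(\lambda_1,\dots,\lambda_M)$ being the frame operator of that $M$-dimensional system. Everything else is a routine unwinding of definitions once the identity ``matrix of $S$ in $\{e_m\}$ equals $AA^*$'' is in hand; no estimates or limiting arguments are needed because the Hilbert space is finite-dimensional and a spanning set is automatically a frame.
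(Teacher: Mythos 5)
Your proof is correct: the identity ``the matrix of $S=T^*T$ in the basis $\{e_m\}$ is $AA^*$'' is exactly the right pivot, and the three implications follow from it by unwinding definitions, as you say. The paper itself offers no proof of this theorem --- it is quoted from \cite{CK} as background --- so there is nothing to compare against; your argument is the standard one found there. You are also right to flag the indexing slip in clause (3): the columns of $A$ are vectors with $M$ coordinates, so they form a frame for $\ell_2(M)\cong\mathcal{H}_M$, not $\ell_2(N)$, and your reading is the intended one.
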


The preceding theorem implies that to construct a frame, which is useful in applications, one only needs to find a matrix $A$ with nonzero orthogonal rows. Then the column vectors of $A$ will form a frame, represented against the eigenbasis of its frame operator $S$, and for which the square sum of the rows are the eigenvalues of $S$ and so the square sum of the columns are the squared norms of the frame vectors. Furthermore, the rows must all square sum to the same number for the frame to be tight and the columns must all square sum to the same value for the frame to be equal norm. Because of this, in the present paper we will assume all frames are represented against the eigenbasis of their frame operator. Theorem \ref{thm2.3} also justifies calling such a matrix a {\it frame matrix} or just a {\it frame} and hence we will use the term {\it frame} interchangeably to mean a frame or a frame matrix.

Our current goal in the present paper is to develop methods for constructing a frame, or more specifically the synthesis matrix of a frame. As we have just seen, when working with the synthesis matrix of a frame one can easily find the eigenvalues of the frame operator and the norms of the frame vectors, which is key in classifying different types of frames. We will first look at known construction methods for a synthesis matrix with prescribed properties. In particular, one well known theorem used for frame construction illustrates how to construct a Parseval frame from the knowledge of an existing Parseval frame. Consider the following construction: for $N \geq M$, given an $N \times N$ unitary matrix, if we select any $M$ rows from this matrix then the column vectors from these rows form a Parseval frame for $\mathcal{H}_M$. Moreover, the leftover set of $N-M$ rows, also has the property that its $N$ columns form a Parseval frame for $\mathcal{H}_{N-M}$. The next theorem, known as Naimark's Theorem, utilizes this type of operation and is one of the most fundamental results in frame theory.

\begin{theorem}[Naimark's Theorem]\cite{CK}\label{naimark}
Let $F=\{f_n \}_{n=1}^N$ be a frame for $\mathcal{H}_M$ with analysis operator $T$, let $\{e_n\}_{n=1}^N$ be the standard basis of $\ell_2\left(N\right)$, and let $P:\ell_2\left(N\right) \rightarrow \ell_2\left(N\right)$ be the orthogonal projection onto $\mbox{range}\left(T\right)$. Then the following conditions are equivalent:
\begin{enumerate}
\item $\{f_n\}_{n=1}^N$ is a Parseval frame for $\mathcal{H}_M$.
\item For all $n=1,\dots,N$, we have $Pe_n=Tf_n$.
\item There exist $\psi_1,\dots, \psi_N \in \mathcal{H}_{N-M}$ such that $\{ f_n \oplus \psi_n \}_{n=1}^N$ is an orthogonal basis of $\mathcal{H}_N$.
\end{enumerate}

Moreover, if (3) holds, then $\{\psi_n\}_{n=1}^N$ is a Parseval frame for $\mathcal{H}_{N-M}$. If $\{\psi'_n\}_{n=1}^N$ is another Parseval frame as in (3), then there exists a unique linear operator $L$ on $\mathcal{H}_{N-M}$ such that $L\psi_n=\psi_n',$ for all $i=1,\dots,N$, and $L$ is unitary.
\end{theorem}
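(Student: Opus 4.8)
The plan is to run the chain of implications $(1)\Rightarrow(2)\Rightarrow(1)$ and $(1)\Rightarrow(3)\Rightarrow(1)$, and then read off the two ``moreover'' statements from the orthonormal‑basis picture produced by $(1)\Rightarrow(3)$. For $(1)\Leftrightarrow(2)$ I would begin from the identity $T^{*}e_n=f_n$, which is immediate from the formula for the synthesis operator, so that $Tf_n=TT^{*}e_n$ for every $n$. If $F$ is Parseval then $T^{*}T=I$, hence $T$ is an isometry; then $TT^{*}$ is self‑adjoint with $(TT^{*})^2=T(T^{*}T)T^{*}=TT^{*}$ and range exactly $\mathrm{range}(T)=\mathrm{range}(P)$, so $TT^{*}=P$ and therefore $Pe_n=Tf_n$. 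Conversely, if $Pe_n=Tf_n=TT^{*}e_n$ for all $n$, then $TT^{*}=P$ on all of $\ell_2(N)$; applying both sides to $Tx\in\mathrm{range}(T)=\mathrm{range}(P)$ gives $T(T^{*}Tx)=Tx$, and cancelling the injective $T$ yields $T^{*}Tx=x$, i.e.\ $F$ is Parseval.

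For $(1)\Rightarrow(3)$ the idea is to exhibit the analysis operator as a ``slice'' of an orthonormal basis. Since $F$ is Parseval, $T$ is an isometry onto the $M$‑dimensional subspace $W:=\mathrm{range}(T)\subseteq\ell_2(N)$, so $W^{\perp}$ has dimension $N-M$; fix any unitary $U:W^{\perp}\to\mathcal{H}_{N-M}$ and let $V:\ell_2(N)\to\mathcal{H}_M\oplus\mathcal{H}_{N-M}$ be the unitary that equals $T^{-1}$ on $W$ and $U$ on $W^{\perp}$. Decomposing $e_n=Pe_n+(I-P)e_n=Tf_n+(I-P)e_n$ and applying $V$ gives $Ve_n=f_n\oplus\psi_n$ with $\psi_n:=U(I-P)e_n$; since $\{e_n\}$ is an orthonormal basis of $\ell_2(N)$ and $V$ is unitary, $\{f_n\oplus\psi_n\}$ is an orthonormal (hence ``orthogonal'') basis of $\mathcal{H}_N$. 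For $(3)\Rightarrow(1)$ and the first ``moreover'' clause, I would apply Parseval's identity in $\mathcal{H}_N$ to the vectors $x\oplus 0$ and $0\oplus y$: using $\langle x\oplus 0,\,f_n\oplus\psi_n\rangle=\langle x,f_n\rangle$ and $\langle 0\oplus y,\,f_n\oplus\psi_n\rangle=\langle y,\psi_n\rangle$ this reads off $\|x\|^2=\sum_n|\langle x,f_n\rangle|^2$ and $\|y\|^2=\sum_n|\langle y,\psi_n\rangle|^2$, so both $\{f_n\}$ and $\{\psi_n\}$ are Parseval.

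For the uniqueness clause, given two families $\{\psi_n\}$ and $\{\psi_n'\}$ as in $(3)$, form the unitaries $V,V':\ell_2(N)\to\mathcal{H}_N$ with $Ve_n=f_n\oplus\psi_n$, $V'e_n=f_n\oplus\psi_n'$, and study $R:=V'V^{-1}$. Expanding $x\oplus 0$ in each of the two orthonormal bases and comparing the $\mathcal{H}_M$‑ and $\mathcal{H}_{N-M}$‑components forces $\sum_n\langle x,f_n\rangle\psi_n'=0$ for every $x\in\mathcal{H}_M$, so $R$ fixes $\mathcal{H}_M\oplus 0$ pointwise; being unitary, $R$ must then preserve the complement $0\oplus\mathcal{H}_{N-M}$ and act there as a unitary $L$. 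Chasing $R(f_n\oplus\psi_n)=f_n\oplus\psi_n'$ through the block decomposition gives $L\psi_n=\psi_n'$, and uniqueness of $L$ is automatic since $\{\psi_n\}$ spans $\mathcal{H}_{N-M}$ by the previous paragraph.

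I do not expect a genuine obstacle: the substance is organizational rather than conceptual. The step that needs the most care is keeping the three identifications straight — $T$ viewed as an isometry onto $W$, the (arbitrary) unitary identification $W^{\perp}\cong\mathcal{H}_{N-M}$, and the resulting labelling of the $\psi_n$ — since it is precisely the freedom in the middle one that the uniqueness statement quantifies. In the uniqueness argument the one slightly non‑obvious point is recognizing that the connecting unitary $V'V^{-1}$ is forced to be block‑diagonal with respect to $\mathcal{H}_M\oplus\mathcal{H}_{N-M}$, which is what allows it to descend to an honest unitary on $\mathcal{H}_{N-M}$. (One should also fix the convention, consistent with the construction in $(1)\Rightarrow(3)$ and with the Parseval conclusions, that ``orthogonal basis'' is meant in the normalized sense.)
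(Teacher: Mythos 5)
The paper does not prove this theorem; it is quoted from \cite{CK} without proof, so there is no in-paper argument to compare against. Your proposal is correct and is essentially the standard proof: identifying $TT^{*}$ with the projection $P$ for $(1)\Leftrightarrow(2)$, extending the isometry $T$ to a unitary $\ell_2(N)\to\mathcal{H}_M\oplus\mathcal{H}_{N-M}$ for $(1)\Rightarrow(3)$, reading off both Parseval identities from the coordinates of $x\oplus 0$ and $0\oplus y$, and obtaining $L$ as the lower block of the connecting unitary $V'V^{-1}$ (whose block-diagonality follows, as you say, from $R$ fixing $\mathcal{H}_M\oplus 0$ pointwise, which in turn uses $\sum_n\langle x,f_n\rangle f_n=x$). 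Your parenthetical caveat is also apt: as stated, ``orthogonal basis'' must be read as ``orthonormal basis'' for $(3)\Rightarrow(1)$ and the Parseval conclusion about $\{\psi_n\}$ to hold.
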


Explicitly, we call $\{\psi_n\}_{n=1}^N$ the {\it Naimark Complement} of $F$. 

With all of the necessary definitions from finite frame theory needed for the present paper complete, we refer the interested reader to \cite{CK,Ch} for a more in-depth study of finite frames.


\section{Fusion Frames}\label{ffsec} 

Due to the redundancy, flexibility and stability of a frame, frame theory has proven to be a powerful area of research with applications to a wide array of fields, including signal processing, noise and erasure reduction, compressed sensing, sampling theory, data quantization, quantum measurements, coding, image processing, wireless communications, time-frequency analysis, speech recognition, bio-imaging, and much more. The reader is referred to \cite{CK} and references therein for further information regarding these applications and more.

In particular, a frame $\{f_n\}_{n=1}^N$ for $\mathcal{H}_M$ with frame operator $S$ possesses the property of perfect reconstruction. That is, for every $x \in \mathcal{H}_M$, we have \[x=\sum_{n=1}^N\langle x, f_n \rangle S^{-1} f_n= \sum_{n=1}^N\langle x, S^{-1}f_n \rangle f_n.\] However, with the recent advances in technology, it is not always the case that a signal (or data in general) can be handled by a single processing system, i.e, by a single frame, and because of this, the necessity for new theories needed to be established. 

Today, across numerous disciplines, scientists utilize vast amounts of data obtained from various networks which need to be analyzed at a central processor. However, due to low communication bandwidth and limited transit/computing power at each single node in the network, the data may not be able to be computed at one centralized processing system. Hence there has been a fundamental shift from centralized information processing to distributed processing, where network management is distributed and the reliability of individual links is less critical. Here the data processing is performed in two stages: (1) local processing at neighboring nodes, followed by (2) the integration of locally processed data streams at a central processor.

An example of distributed processing involves wireless sensor networks, which can provide cost-effective and reliable surveillance. Consider a large number of inexpensive, small sensors dispersed throughout an area in order to collect data about the area or to keep surveillance. Due to practical and economic factors such as the topography of the land, limited signal processing power, low communication bandwidth, or short battery life, the sensors are not capable of transmitting their information to one central processor. Therefore, the sensors need to be deployed in smaller clusters, where in each cluster there is one higher powered sensor which collects all of the information from the signals in its cluster and then transmits this information to a central processor. In this two-stage model, information is first gathered locally in each cluster, then processed more globally at a central station. A similar local-global processing principle is also applicable to modeling the human visual cortex \cite{RJ}.

Mathematically, if we view each cluster as a subspace of a larger space, then as illustrated in this example and in general, large sensor networks can be seen as a redundant collection of sub-networks forming a set of subspaces in some space. More explicitly, given data and a collection of subspaces, first project the data onto the subspaces, then process the data within each subspace. Next, combine or {\it fuse} all of the locally processed information. Now we know information about how the data interacts with the whole space and not just with each individual subspace. To relate this back to our wireless sensor example, we have that the decomposition of the given data into the subspaces coincides with the local clusters of sensors gathering information locally. The fusion of the locally processed information then coincides with the larger powered sensors in each cluster transmitting their information to a central processor. The distributed fusion, which models the reconstruction of the data, also enables an error analysis of resilience against erasures. This is, however, only possible if the data is decomposed in a redundant way, which forces the subspaces to be redundant.

This concept of a frame-like collection of subspaces is known as a {\it fusion frame} and provides a suitable mathematical framework to design and analyze two-stage processing. Because of this, fusion frame theory in used in applications where two-stage (local/global) analysis is required, with applications situated in areas which require distributed processing, such as: distributed sensing, parallel processing, packet encoding and optimal packings \cite{CK}. In finite frame theory, a fusion frame is a spanning collection of subspaces, which were first studied in \cite{CK04}, and have been further analyzed in \cite{CCHKP10,framesofsubspaces, CK, CKL}.

Fusion frame theory is a generalization of frame theory. To illustrate this connection, recall that for a given frame $\{f_n\}_{n=1}^N$, its frame operator can be viewed in the following manner:
\[ Sx=T^*Tx=\sum_{n=1}^N\langle x,f_n \rangle f_n = \sum_{n=1}^N ||f_n||^2\langle x,\frac{f_n}{||f_n||}\rangle \frac{f_n}{||f_n||}.\]

Notice that $S$ is the sum of rank one projections each with weight given by the square norm of the respective frame vector. Generalizing this idea to consider weighted projections of arbitrary rank yields the definition of a fusion frame. Explicitly a fusion frame is as follows:

\begin{definition}
Let $\{ W_i\}_{i=1}^D$ be a family of subspaces in $\mathcal{H}_M$, and let $\{ w_i\}_{i=1}^D \subseteq \mathbb{R}^{+}$ be a family of weights. Then $\{\left( W_i,w_i\right)\}_{i=1}^D$ is a {\em fusion frame} for $\mathcal{H}_M$ if there exist constants $0 < A\leq B < \infty$ such that
\[A\|x\|_2^2 \leq \sum_{i=1}^D w_i^2 \|P_i\left( x \right)\|_2^2 \leq B\|x\|_2^2 \mbox{ {\ \ }for all } x \in \mathcal{H}_M,\]
where $P_i$ denotes the orthogonal projection of $\mathcal{H}_M$ onto $W_i$ for each $i \in \{1,\dots,D\}$. 
\begin{enumerate}
\item The constants $A$ and $B$ are called the {\em lower fusion frame bound and upper fusion frame bound}, respectively.  
\item The largest lower fusion frame bound and the smallest upper fusion frame bound are called the {\em optimal lower fusion frame bound and optimal upper fusion frame bound}, respectively.
\item If $A=B$ is possible then the family $\{\left( W_i,w_i \right)\}_{i=1}^D$ is called a {\em tight fusion frame}.  Moreover, if $A=B=1$ is possible then the family $\{\left( W_i,w_i \right)\}_{i=1}^D$ is called a {\em Parseval fusion frame}. 
\item If each subspace has unit weight, $w_i=1$ for all $i=1,\dots, D$, then the family $\{\left( W_i, w_i \right)\}_{i=1}^D$ is simply denoted $\left(W_i \right)_{i=1}^D$ and is called a {\em unit weighted fusion frame}.
\item Lastly, the {\it fusion frame operator} $\widetilde{S}:\mathcal{H}_M \rightarrow \mathcal{H}_M$ defined by $\widetilde{S}x= \sum_{i=1}^Dw_i^2P_i\left(x\right)$ for all $x \in \mathcal{H}_M$ is a positive, self-adjoint, invertible operator, where $P_i$ is the orthogonal projection of $\mathcal{H}_M$ onto $W_i$.
\item The $\{W_i\}_{i=1}^D$ are called the {\em fusion frame subspaces}.
\end{enumerate}
\end{definition}

Recall that when considering a conventional frame a signal can be represented by a collection of scalars, which measure the amplitudes of the projections of the signal onto the frame vectors. Generalizing this idea to a fusion frame we now represent a signal by a collection of vectors via the projections of the signal onto the subspaces of the fusion frame. Explicitly, given a fusion frame $\{\left(W_i,w_i\right)\}_{i=1}^D$, any signal $x \in \mathcal{H}_M$ can be represented as $x= \sum_{i=1}^D w_i^2\widetilde{S}^{-1}\left(P_i\left(x\right)\right)$. In particular, if $\{\left(W_i,w_i\right)\}_{i=1}^D$ is a tight fusion frame with tight fusion frame bound $A$, then the fusion frame operator is a multiple of the identity and becomes $\widetilde{S}=AI$ yielding the representation $x=A^{-1}\sum_{i=1}^Dw_i^2\left(P_i\left(x\right)\right)$ for any signal $x \in \mathcal{H}_M$. In a two-stage data processing setup, these orthogonal projections serve as locally processed data, which can be combined to reconstruct the signal of interest. 

We have seen that in two-stage processing, a signal can be reconstructed via a fusion frame. However, due to sensor failures, buffer over flows, added noise or subspace perturbations during the two stage processing, some information about the signal could be lost or corrupted. One might ask, how can a fusion frame reconstruct a signal when these problems are present? Clearly, redundancy between the subspaces helps to add resilience against erasures (or lost data), as mentioned earlier; but what about other issues that could arise when a signal is being processed. Redundancy between these subspaces may not be sufficient to manage these issues and typically extra structure on the fusion frame is required, such as prescribing the subspace dimensions or prescribing the fusion frame operator.  In particular, \cite{KPC, KPCL} show that in order to minimize the mean-squared error in the linear minimum mean-squared error estimation of a random vector from its fusion frame measurements in white noise, the fusion frame needs to be Parseval or tight. Also to provide maximal robustness against erasures of one fusion frame subspace the fusion frame subspaces must also be equidimensional.

Within two stage processing, further issues could potentially arise due to economic factors which limit the available computing power and bandwidth for data processing. And because of this we need to be able to construct a fusion frame that enables signal decomposition with a minimal number of additions and multiplications. Seeing these numerous potential constraints on our data processing capabilities, we now have  motivation to determine the existence and construction of fusion frames that not only have a desired fusion frame operator or subspace dimensions, but also possess some degree of {\it sparsity} in order to reduce computational cost. 

Next we will define and discuss what it means for a fusion frame to be {\it sparse}; but before we do this we will first discuss sparse frames. By a {\it sparse} frame we mean that the frame vectors have few non-zero coefficients with respect to a fixed orthonormal basis. Explicitly,

\begin{definition} Given a fixed orthonormal basis of $\mathcal{H}_M$, a vector in $\mathcal{H}_M$ which can be represented by only $0 \leq k \leq M$ basis elements, is called {\it $k$-sparse}.
\end{definition}

Recently, sparsity has become an important concept in various areas of applied mathematics, computer science, and electrical engineering. Many types of signals  possess sparse representations when choosing a suitable basis or frame. As such, their reconstruction simplifies and in general these signals can be recovered from few measurements using $\ell_1$ minimization techniques. Since fusion frames generalize the structure of a frame, it is natural to question if sparse representations in fusion frames posses similar properties as sparse representations in frames. In particular, do sparse fusion frames allow for precise signal reconstruction when using only an under determined set of equations? The answer to this question is yes, which leads to a further question: how can such a sparse fusion frame be constructed?

Since a fusion frame is a collection of subspaces which could potentially have large dimensions and/or a spectrum with a wide range, then the computational complexity for recovering a signal via a fusion frame greatly increases from that of a conventional frame. Thus to alleviate some of this computation and to speed up processing time, we would like our subspaces to be as sparse as possible. In particular, if each subspace was spanned by a collection of sparse vectors with respect to a fixed orthonormal basis for $\mathcal{H}_M$, then this would greatly help with these issues. We can achieve sparse fusion frames if each vector of such a subspace basis is $k$-sparse with small $k$. We now make this definition clear.

\begin{definition}
A fusion frame $\{\left(W_i,v_i\right)\}_{i\in I}$ for an indexing set $I$, is $k$-sparse with respect to an orthonormal basis $\{e_j\}_{j=1}^M$ for $\mathcal{H}_M$ if each subspace $W_i$ is spanned by an orthonormal basis $\{e_{ij}\}_{j=1}^{m_i}$ so that for each $j = 1, 2,\dots,m_i$, we have $e_{ij} \in \mbox{ span}\{e_\ell\}_{\ell \in J}$ and $|J| \leq k$.
\end{definition}

Since fusion frames are necessary in two-stage data processing, which is used in a wide array of fields, we would like to be able to construct sparse fusion frames satisfying desired properties. This way researchers can implement our construction techniques to help them with their problems in the areas of distributed processing.  Similar to working with the synthesis matrix of a frame, we will work with the matrix representative of a fusion frame, as described in the following theorem.

\begin{theorem} \cite{framesofsubspaces}\label{ffmatrix} The following are equivalent:
\begin{enumerate}
\item $\{\left( W_i, w_i \right)\}_{i=1}^D$ is a fusion frame for $\mathcal{H}_M$ with lower and upper fusion frame bounds $A$ and $B$, respectively.
\item There exists an orthonormal basis $\{e_{ij}\}_{j=1}^{d_i}$ for $W_i$, for all $i=1,\dots,D$, so that the matrix $B$ with column vectors $e_{ij}$ for $i \in \{1,\dots, D\}$ and $j \in \{1,\dots, d_i\}$ satisfies:
\begin{enumerate}
\item The rows are orthogonal and
\item the square sums of the rows lie between $A$ and $B$.
\end{enumerate}
\end{enumerate}
\end{theorem}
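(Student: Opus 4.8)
The plan is to reduce Theorem \ref{ffmatrix} to Theorem \ref{thm2.3} by passing from the collection of subspaces to an ordinary frame. First I would fix, for each $i$, an orthonormal basis $\{e_{ij}\}_{j=1}^{d_i}$ of $W_i$ and record the key identity: the family $\{w_ie_{ij}:1\le i\le D,\ 1\le j\le d_i\}$ is an ordinary frame for $\mathcal{H}_M$ whose frame operator is precisely the fusion frame operator $\widetilde S$. Indeed, since $\{e_{ij}\}_j$ is an orthonormal basis of $W_i$ we have $P_ix=\sum_j\langle x,e_{ij}\rangle e_{ij}$, hence $\sum_{i,j}\langle x,w_ie_{ij}\rangle w_ie_{ij}=\sum_iw_i^2P_ix=\widetilde Sx$ for all $x$. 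Taking inner products, $\langle\widetilde Sx,x\rangle=\sum_iw_i^2\|P_ix\|^2$, so---exactly as in the ordinary frame case---$\{(W_i,w_i)\}_{i=1}^D$ is a fusion frame with bounds $A\le B$ if and only if $AI\le\widetilde S\le BI$.

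For (1) $\Rightarrow$ (2), assuming $\{(W_i,w_i)\}$ is a fusion frame with bounds $A,B$, I would represent everything against the eigenbasis $\{e_m\}_{m=1}^M$ of $\widetilde S$ (the paper's standing convention) and let $B$ be the $M\times N$ matrix, $N=\sum_id_i$, whose columns are the coordinate vectors of the $w_ie_{ij}$. By the identity above this is the synthesis matrix of the frame $\{w_ie_{ij}\}$, whose frame operator is $\widetilde S=\diag(\lambda_1,\dots,\lambda_M)$; Theorem \ref{thm2.3} then gives that the rows of $B$ are orthogonal with the $m$-th row square-summing to $\lambda_m$, and $AI\le\widetilde S\le BI$ forces every $\lambda_m\in[A,B]$. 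Conversely, for (2) $\Rightarrow$ (1), given such a matrix $B$ with orthogonal rows whose square-sums $s_1,\dots,s_M$ lie in $[A,B]$, Theorem \ref{thm2.3} says its columns form a frame with frame operator $BB^*=\diag(s_1,\dots,s_M)$; this operator equals $\widetilde S$ by the identity above, and $AI\le\diag(s_1,\dots,s_M)\le BI$, so $AI\le\widetilde S\le BI$ and $\{(W_i,w_i)\}$ is a fusion frame with the stated bounds. (When all $w_i=1$ the columns of $B$ are literally the orthonormal bases in the statement; for general weights the correct matrix has columns $w_ie_{ij}$, and the argument is otherwise unchanged.)

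The content here is essentially bookkeeping, so I do not anticipate a real obstacle; the two points needing care are (i) insisting that the ambient orthonormal basis in (2) be the eigenbasis of $\widetilde S$, without which $BB^*$ need not be diagonal and the row-orthogonality/row-square-sum conclusion would fail, and (ii) tracking the weights $w_i$ consistently, since the literal statement folds them into the subspaces rather than into the matrix columns. Everything else is a direct transcription of Theorem \ref{thm2.3} through the identification of $\widetilde S$ with the frame operator of the union of the subspace bases.
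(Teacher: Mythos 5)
Your proof is correct; the paper itself states this result without proof (citing \cite{framesofsubspaces}), and your reduction to Theorem \ref{thm2.3} via the identity $\widetilde{S}x=\sum_{i,j}\langle x,w_ie_{ij}\rangle w_ie_{ij}$, i.e.\ identifying the fusion frame operator with the frame operator of the weighted union of subspace bases, is exactly the standard argument and the same identification the paper displays in Sections 3 and 9. Your two flagged points are also well taken: the direction $(1)\Rightarrow(2)$ does require representing against the eigenbasis of $\widetilde{S}$, and the matrix in (2) should really have columns $w_ie_{ij}$, since the statement as printed is only literally correct for unit weights.
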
 
 
Similar to our discussion of conventional frames, the smallest and largest eigenvalues of the fusion frame operator correspond to the optimal smallest and largest fusion frame bounds, $A$ and $B$. Moreover, the square sum of the rows of the fusion frame matrix, as described in Theorem \ref{ffmatrix}, yield the eigenvalues of the fusion frame operator, and hence if all of the rows of such a matrix square sum to the same value, then we have a tight fusion frame. In the present paper, when we discuss the eigenvalues of a fusion frame, we specifically mean the eigenvalues of its fusion frame operator.

In Section \ref{ffsection} and after, we provide easily implementable algorithms for the construction of sparse fusion frames with desired properties. This culminates to the most generalized algorithm for fusion frame constructions which is seen in Section \ref{gffsec}. Prior to the development of these construction algorithms, there were other methods for constructing fusion frames. However these methods first require the knowledge of a given fusion frame. In particular, two general ways to construct a fusion frame from a given fusion frame are the {\it Spatial Complement Method} and the {\it Naimark Complement Method}, which we now explain.

Given a fusion frame, taking its spatial complement is a natural way of generating a new fusion frame. In order to see this, we first need the definition of an {\em orthogonal fusion frame to a given fusion frame}.

\begin{definition}\cite{CCHKP10}
Let $\{\left( W_i, w_i\right)\}_{i=1}^D$ be a fusion frame for $\mathcal{H}_M$. If the family $\{\left( W_i^{\perp}, w_i\right)\}_{i=1}^D$, where $W_i^{\perp}$ is the orthogonal complement of $W_i$, is also a fusion frame, then we call $\{\left( W_i^{\perp}, w_i \right)\}_{i=1}^D$ the {\it orthogonal fusion frame} to $\{\left( W_i, w_i\right)\}_{i=1}^D$.
\end{definition}

With this definition now clear, we proceed with the spatial complement method for construction. 

\begin{theorem}[Spatial Complement Theorem]\cite{CCHKP10}\label{spatialcomp}
Let $\{\left( W_i, w_i\right)\}_{i=1}^D$ be a fusion frame for $\mathcal{H}_M$ with optimal fusion frame bounds $0 < A\leq B<\infty$ such that $\sum_{i=1}^D w_i^2<\infty$. Then the following conditions are equivalent:
\begin{enumerate}
\item $\bigcap_{i=1}^D W_i=\{0\}$.
\item $B < \sum_{i=1}^D w_i^2$.
\item The family $\{\left( W_i^{\perp}, w_i\right)\}_{i=1}^D$ is a fusion frame for $\mathcal{H}_M$ with optimal fusion frame bounds $\sum_{i=1}^D w_i^2-B$ and $\sum_{i=1}^D w_i^2-A$.
\end{enumerate}
\end{theorem}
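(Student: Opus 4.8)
The plan is to prove the cycle of implications $(1)\Rightarrow(2)\Rightarrow(3)\Rightarrow(1)$, with the bulk of the work in establishing $(2)\Rightarrow(3)$; the other two implications are essentially bookkeeping. Throughout, write $P_i$ for the orthogonal projection onto $W_i$, so that the orthogonal projection onto $W_i^{\perp}$ is $I-P_i$, and set $C:=\sum_{i=1}^D w_i^2$, which is finite by hypothesis. The key identity, valid for every $x\in\mathcal H_M$, is
\[
\sum_{i=1}^D w_i^2\,\|(I-P_i)x\|^2 \;=\; C\,\|x\|^2 \;-\; \sum_{i=1}^D w_i^2\,\|P_i x\|^2,
\]
which follows immediately from the Pythagorean identity $\|x\|^2=\|P_ix\|^2+\|(I-P_i)x\|^2$ applied termwise. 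This identity converts lower bounds for the complementary family into upper bounds for the original, and vice versa, and it is the engine for the whole argument.

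For $(1)\Rightarrow(2)$: suppose $\bigcap_{i=1}^D W_i=\{0\}$. If we had $B=C$, then from the identity above the complementary quadratic form $\sum w_i^2\|(I-P_i)x\|^2 = C\|x\|^2-\sum w_i^2\|P_ix\|^2$ would have optimal upper bound $C-A$ and optimal \emph{lower} bound $C-B=0$; a zero lower bound means there is a unit vector (by compactness of the unit sphere in finite dimensions, an actual minimizer) $x_0$ with $\sum w_i^2\|(I-P_i)x_0\|^2=0$, forcing $(I-P_i)x_0=0$, i.e. $x_0\in W_i$ for all $i$, contradicting $\bigcap W_i=\{0\}$. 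Since always $B\le C$ (the upper bound $\sum w_i^2\|P_ix\|^2\le\sum w_i^2\|x\|^2$ is immediate), we conclude $B<C$. For $(2)\Rightarrow(3)$: assuming $B<C$, apply the identity to read off that $\{(W_i^{\perp},w_i)\}_{i=1}^D$ has the sharp two-sided bound
\[
(C-B)\,\|x\|^2 \;\le\; \sum_{i=1}^D w_i^2\,\|(I-P_i)x\|^2 \;\le\; (C-A)\,\|x\|^2,
\]
where $C-B>0$ by hypothesis and $C-A\ge C-B>0$, so these are genuine fusion frame bounds; optimality transfers because each inequality for the complementary form is equivalent, via the identity, to the corresponding optimal inequality for the original form, so no slack is introduced. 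For $(3)\Rightarrow(1)$: if $\{(W_i^{\perp},w_i)\}$ is a fusion frame, its lower bound $A'>0$ gives, again by the identity, $\sum w_i^2\|P_ix\|^2\le (C-A')\|x\|^2$ with $C-A'<C$, so the original upper bound satisfies $B\le C-A'<C$; then running the $(1)\Rightarrow(2)$ argument in reverse—$B<C$ means the complementary form has a strictly positive lower bound, hence no common unit vector in all the $W_i$—yields $\bigcap W_i=\{0\}$.

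The main obstacle is the handling of \emph{optimality} of the bounds in $(2)\Rightarrow(3)$ rather than mere finiteness: one must verify that $C-B$ and $C-A$ are the \emph{best} constants, not just admissible ones. In finite dimensions this is clean because the relevant infima and suprema of $\sum w_i^2\|P_ix\|^2$ over the unit sphere are attained, so the identity gives an exact correspondence between extremizers of the two forms; the care needed is only to confirm that the hypothesis $\sum w_i^2<\infty$ (here automatic, $D$ finite) is what legitimizes writing $C-B$ and $C-A$ at all, and that one never divides by a quantity that could vanish—precisely the role played by condition $(2)$. A secondary point worth stating explicitly is the equivalence $B=C \iff \bigcap W_i\neq\{0\}$, since it is the hinge between $(1)$ and $(2)$; everything else is a direct application of the Pythagorean identity.
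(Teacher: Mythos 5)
Your proof is correct. The paper itself gives no proof of this theorem --- it is simply quoted from \cite{CCHKP10} --- but your argument via the Pythagorean identity $\sum_i w_i^2\|(I-P_i)x\|^2 = \left(\sum_i w_i^2\right)\|x\|^2 - \sum_i w_i^2\|P_ix\|^2$ is exactly the standard one: it converts the optimal bounds of one family into those of the other with no slack, and the compactness of the unit sphere in $\mathcal{H}_M$ correctly upgrades ``$B=\sum_i w_i^2$'' to the existence of an actual common unit vector in all the $W_i$, which is the hinge between (1) and (2). No gaps.
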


Theorem \ref{spatialcomp} provides an easy method for determining a new fusion frame from a given fusion frame, and also yields information regarding the fusion frame bounds for the new fusion frame. However, this theorem is only applicable if you want the orthogonal fusion frame of a given fusion frame. 

Another fusion frame construction method, which requires a given fusion frame is called the {\it Naimark Complement Method}. Recall, Naimark's Theorem for frames, as stated in Theorem \ref{naimark}. Since fusion frames are a generalization of frames then we can define the Naimark complement of a fusion frame through the use of the Naimark complement of a conventional frame. Consider the following relationship between frames and fusion frames. Let $\{\left(W_i,w_i\right)\}_{i=1}^D$ be a fusion frame for $\mathcal{H}_M$ with frame operator $\widetilde{S}$. Let $\left(\psi_{i,j}\right)_{j=1}^{d_i}$ be an orthonormal basis for $W_i$ for $i=1,\dots, D$ and let $T$ be the analysis operator for the family $\left(W_i, \psi_{i,j}\right)$, then we have the following equivalence:
\[\widetilde{S}x=\sum_{i=1}^Dw_i^2\left(P_i\left(x\right)\right)=\sum_{i=1}^D\sum_{j=1}^{d_i}w_i^2\langle x, \psi_{i,j}\rangle \psi_{i,j} =\]\[ \sum_{i=1}^D\sum_{j=1}^{d_i} \langle x, w_i \psi_{i,j}\rangle w_i \psi_{i,j}=T^*Tx=Sx.\]

Thus we see that the fusion frame operator and the frame operator are equivalent in this scenario. Thus every fusion frame arises from a conventional frame partitioned into equal-norm, orthogonal sets. Using this relationship, we can define the Naimark complement of a fusion frame via the Naimark complement of a frame. 

\begin{definition}
Let $\{\left(W_i,w_i\right)\}_{i=1}^D$ be a Parseval fusion frame for $\mathcal{H}_M$. Choose orthonormal bases $\left(\psi_{i,j}\right)_{j=1}^{d_i}$ for $W_i$, making $\{w_i \psi_{i,j}\}_{i=1,j=1}^{D,d_i}$ a Parseval frame for $\mathcal{H}_M$. By Theorem \ref{naimark}, $\{w_i \psi_{i,j}\}_{i=1,j=1}^{D,d_i}$ has a Naimark complement Parseval frame $\{\psi'_{i,j}\}_{i=1,j=1}^{D,d_i}$ for $\mathcal{H}_{D-M}$. The {\it Naimark Complement fusion frame} of $\{\left(W_i,w_i\right)\}_{i=1}^D$ is given by \[\left\{\left( W_i',\sqrt{1-w_i^2}\right)\right\}_{i=1}^D,\] which is a Parseval fusion frame for $\mathcal{H}_{\sum_{i=1}^Dd_i-D}$, where $W_i':= \mbox{ span }\left(\{\psi_{i,j}'\}_{j=1}^{d_i}\right)$.
\end{definition}

Notice that the choice of the orthonormal bases for the subspaces $W_i$ of a fusion  frame will alter the corresponding Naimark complement fusion frame. However, it is shown in \cite{CFMPS} that all choices yield unitarily equivalent Naimark complement fusion frames in the sense that there is a unitary operator mapping the corresponding fusion frame subspaces onto one another. Now with the knowledge of what a Naimark complement fusion frame is, the next theorem provides properties for when one exists. 

\begin{theorem} [Naimark Complement Method]\cite{CCHKP10}\label{naimarkff}
Let $\{\left( W_i, w_i\right)\}_{i=1}^D$ be a Parseval fusion frame for $\mathcal{H}_M$ with $0< w_i< 1$, for all $i=1, \dots, D$. Then there exists a Hilbert space $\mathcal{K} \subseteq \mathcal{H}_M$ and a Parseval fusion frame $\{\left( W_i', \sqrt{1- w_i^2}\right)\}_{i=1}^D$ for $\mathcal{K} \ominus \mathcal{H}_M$ with dim$W_i' = $dim$W_i$ for all $i=1,\dots,D$.
\end{theorem}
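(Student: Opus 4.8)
The plan is to reduce the statement to Naimark's Theorem for ordinary frames (Theorem \ref{naimark}) through the frame/fusion-frame dictionary recorded just above: a Parseval fusion frame is precisely a Parseval frame partitioned into equal-norm orthogonal blocks. Concretely, I would first choose, for each $i$, an orthonormal basis $(\psi_{i,j})_{j=1}^{d_i}$ of $W_i$, where $d_i=\dim W_i$. The computation $\widetilde S x=\sum_{i}\sum_{j}\langle x,w_i\psi_{i,j}\rangle\, w_i\psi_{i,j}$ displayed above shows that the frame operator of $\{w_i\psi_{i,j}\}_{i,j}$ equals $\widetilde S=I$, so $\{w_i\psi_{i,j}\}_{i,j}$ is a Parseval frame for $\mathcal H_M$. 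Writing $N=\sum_{i=1}^D d_i$, Theorem \ref{naimark} then produces a Naimark complement: a Parseval frame $\{\psi'_{i,j}\}_{i,j}$ for $\mathcal K\ominus\mathcal H_M$, where $\mathcal K:=\mathcal H_N$ contains $\mathcal H_M$ (identified with $\mathrm{range}(T)$). This is the space $\mathcal K$ of the statement.

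The heart of the argument is to show that inside each block $i$ the complementary vectors $\{\psi'_{i,j}\}_{j=1}^{d_i}$ remain mutually orthogonal and all share the common norm $\sqrt{1-w_i^2}$. This follows from the complementarity of Gram matrices built into Naimark's Theorem: part (3) of Theorem \ref{naimark} gives that $\{(w_i\psi_{i,j})\oplus\psi'_{i,j}\}_{i,j}$ is an orthonormal basis of $\mathcal H_N$, hence for any two indices $(i,j)\neq(i',j')$ one has $\langle w_i\psi_{i,j},w_{i'}\psi_{i',j'}\rangle+\langle\psi'_{i,j},\psi'_{i',j'}\rangle=0$, and $\|w_i\psi_{i,j}\|^2+\|\psi'_{i,j}\|^2=1$. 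Taking $i=i'$ and $j\neq j'$, orthonormality of $(\psi_{i,j})_j$ forces $\langle\psi'_{i,j},\psi'_{i,j'}\rangle=0$; the diagonal identity gives $\|\psi'_{i,j}\|^2=1-w_i^2$. Since $w_i<1$ these vectors are nonzero, so $W_i':=\mathrm{span}\{\psi'_{i,j}\}_{j=1}^{d_i}$ has dimension exactly $d_i=\dim W_i$, and $\{\psi'_{i,j}/\sqrt{1-w_i^2}\}_{j=1}^{d_i}$ is an orthonormal basis of $W_i'$.

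It then remains to verify that $\{(W_i',\sqrt{1-w_i^2})\}_{i=1}^D$ is a Parseval fusion frame for $\mathcal K\ominus\mathcal H_M$. Using the orthonormal bases just produced, for $x$ in this space one has $\|P_{W_i'}x\|^2=\frac{1}{1-w_i^2}\sum_{j=1}^{d_i}|\langle x,\psi'_{i,j}\rangle|^2$, so that $\sum_{i=1}^D(1-w_i^2)\|P_{W_i'}x\|^2=\sum_{i,j}|\langle x,\psi'_{i,j}\rangle|^2=\|x\|^2$, the last equality because $\{\psi'_{i,j}\}_{i,j}$ is a Parseval frame for $\mathcal K\ominus\mathcal H_M$; this is exactly the Parseval fusion frame identity. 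The dimension claim $\dim W_i'=\dim W_i$ was already observed. I expect the only genuine obstacle to be the block-wise orthogonality/equal-norm assertion of the second paragraph; once the Gram-complementarity identity from Naimark's Theorem is in hand, everything else is routine bookkeeping. One minor point to address is that the bases $(\psi_{i,j})_j$ are determined only up to a unitary on each $W_i$, so different choices give a priori different complements; as noted via \cite{CFMPS} these are unitarily equivalent, and in any case only \emph{existence} is being claimed here, which is insensitive to the choice.
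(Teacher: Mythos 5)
Your proposal is correct and follows exactly the route the paper sets up: the paper states this theorem with a citation, but the Definition immediately preceding it constructs the Naimark complement fusion frame in precisely your way (orthonormal bases $(\psi_{i,j})_j$ of the $W_i$, so that $\{w_i\psi_{i,j}\}$ is a Parseval frame, followed by Theorem \ref{naimark}), and your verification of the block-wise orthogonality, the common norm $\sqrt{1-w_i^2}$, and the Parseval fusion frame identity supplies the bookkeeping the paper leaves implicit. No gaps.
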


Theorem \ref{naimarkff} provides a nice method for determining when a Naimark complement fusion frame exists and gives an exact description of this new fusion frame and its subspace dimensions. Notice, however that the Naimark complement method for fusion frames is only applicable to Parseval fusion frames, much like how the Naimark Theorem for conventional frames was only applicable to Parseval frames. Both the spatial complement method and the Naimark complement method for constructing fusion frames are useful in some applications where the corresponding complement fusion frame has certain desired properties that the original fusion frame may lack. However, if no given fusion frame is known, then neither theorem is useful for construction. In Section \ref{ffsection} through Section 12 of the present paper, we will see numerous Spectral Tetris construction techniques for sparse fusion frames with prescribed properties. These construction algorithms were the first of their kind and before their creation the only methods for fusion frame constructions were the previous ones mentioned. Before we explicitly describe our Spectral Tetris fusion frame construction algorithms we will develop the theory behind Spectral Tetris frame constructions. We will eventually see that the Spectral Tetris frame constructions lend nicely to the fusion frame constructions.


\section{Before Spectral Tetris}

Before Spectral Tetris, the field relied on {\it existence theorems} to tell if certain frames must exist.  But these theorems did not give the exact vectors which formed the required frames.  The main results here are from \cite{CFKT, CL,CL1}.  As it turns out, these results existed in the literature prior to these papers (known as the {\it Schur-Horn Theorem}) but were in a form that was not recognized earlier.  However, \cite{CL,CL1} certainly provide the best proofs available for the Schur-Horn Theorem. 

\begin{theorem}[Schur-Horn Theorem] \label{shthm}\cite{CL1}
Let $S$ be a positive, self-adjoint operator on $\mathcal{H}_M$, and let $\lambda_1
\ge \lambda_2 \ge \cdots \ge \lambda_M>0$ be the eigenvalues of $S$.
Further, let $N\ge M$, and let $a_1 \ge a_2 \ge \cdots \ge a_N$ be positive
real numbers.  The following are equivalent:

(1)  There exists a frame $\{f_n\}_{n=1}^N$ for $\mathcal{H}_M$ having frame operator
$S$ and satisfying $\|f_n\|=a_n$ for all $n=1,2,\ldots,N$.

(2)  For every $j\le k \leq M$ we have
\[ \sum_{j=1}^ka_j^2 \le \sum_{j=1}^k \lambda_j\mbox{ and } \sum_{j=1}^Na_j^2 = 
\sum_{j=1}^N \lambda_j.\]
\end{theorem}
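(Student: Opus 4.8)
The plan is to prove the Schur--Horn Theorem by reducing it to the matrix-theoretic picture set up in Theorem~\ref{thm2.3}. By that theorem, condition (1) is equivalent to the existence of an $M\times N$ matrix $A$ whose rows are orthogonal, whose $m$-th row squares to $\lambda_m$, and whose $n$-th column squares to $a_n^2$; here we may represent the frame in the eigenbasis of $S$, so $AA^*=\diag(\lambda_1,\dots,\lambda_M)$. Equivalently, writing $G=A^*A$ for the Gram matrix, we need a positive semidefinite $N\times N$ matrix $G$ of rank $M$ whose nonzero eigenvalues are $\lambda_1,\dots,\lambda_M$ and whose diagonal entries are $a_1^2,\dots,a_N^2$. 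The classical Schur--Horn theorem for Hermitian matrices says precisely that such a $G$ exists if and only if the vector $(a_1^2,\dots,a_N^2)$ is majorized by the vector $(\lambda_1,\dots,\lambda_M,0,\dots,0)$ of eigenvalues (padded with $N-M$ zeros), which after discarding the trivially satisfied inequalities involving the zeros is exactly condition (2): the partial-sum inequalities $\sum_{j=1}^k a_j^2\le\sum_{j=1}^k\lambda_j$ for $k\le M$ (for $k>M$ they follow since the remaining $\lambda_j$ are zero) together with the total-sum equality $\sum_{j=1}^N a_j^2=\sum_{j=1}^N\lambda_j$.

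The direction (1)$\Rightarrow$(2) is the easy half and I would do it directly. The total-sum equality is just the identity $\sum_m\lambda_m=\sum_n\|f_n\|^2$ recorded in Section~2. For the partial sums, given the frame $\{f_n\}_{n=1}^N$ with frame operator $S$, fix $k\le M$ and let $Q$ be the orthogonal projection onto the span of the top $k$ eigenvectors of $S$; then $\sum_{j=1}^k\lambda_j=\operatorname{tr}(QSQ)=\sum_{n=1}^N\|Qf_n\|^2\ge\sum_{n=1}^k\|Qf_n\|^2$, and one needs the extra observation that $\sum_{n=1}^k\|Qf_n\|^2\ge\sum_{n=1}^k a_n^2$ is \emph{not} automatic — instead one runs the standard rearrangement/Ky~Fan argument: among all rank-$k$ projections, $Q$ maximizes $\sum_n\|Qf_n\|^2$ restricted to any $k$-subset only after choosing the subset to be the $k$ largest $a_n$, and comparing with $\|f_n\|^2=a_n^2\ge\|Qf_n\|^2$ gives the inequality. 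I would phrase this cleanly via Ky~Fan's maximum principle: $\sum_{j=1}^k\lambda_j=\max\{\operatorname{tr}(PS):P\text{ a rank-}k\text{ projection}\}\ge\sum_{n\in E}\langle Sf_n',f_n'\rangle$-type bounds, or more simply cite that the diagonal of a Hermitian matrix is majorized by its spectrum.

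For the hard direction (2)$\Rightarrow$(1), I would give an explicit inductive construction rather than invoke the abstract Schur--Horn theorem, since the paper's spirit is constructive. The plan: induct on $N$. If $N=M$, the majorization conditions force $a_n^2=\lambda_n$ for all $n$ and one takes $f_n=\sqrt{\lambda_n}\,e_n$. For $N>M$, the key lemma is that one can ``split off'' one vector: choose an index and a value so that removing a rank-one piece of norm $a_N$ leaves eigenvalues $\lambda_1',\dots,\lambda_M'$ (an interlacing perturbation of the $\lambda_j$) and norms $a_1,\dots,a_{N-1}$ still satisfying the majorization inequalities for the pair $(N-1,M)$; then apply the inductive hypothesis and adjoin $f_N$. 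Concretely one finds two eigenvalues $\lambda_p\ge a_N^2\ge\lambda_q$ (possible because $\max_j\lambda_j\ge \frac{1}{N}\sum\lambda_j=\frac1N\sum a_j^2\ge$... and $\min_j\lambda_j\le a_N^2$ since $a_N$ is the smallest norm — this requires a short argument using (2)), builds a vector $f_N$ lying in $\operatorname{span}\{e_p,e_q\}$ with $\|f_N\|=a_N$ and such that $S-f_Nf_N^*$ restricted to that $2$-dimensional subspace has a prescribed eigenvalue, leaving the complementary eigenvalue plus the untouched $\lambda_j$'s as the new spectrum. The main obstacle is verifying that after this rank-one downdate the new norm--eigenvalue pair still satisfies \emph{all} the partial-sum inequalities of condition (2) for size $(N-1,M)$; this is a somewhat delicate bookkeeping argument about how the single modified eigenvalue slots into the sorted order, and getting the choice of $p,q$ and the target eigenvalue right so that the inequalities are preserved is where the real work lies. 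I would isolate this as a separate lemma about majorization and rank-one perturbations, prove it by a direct case analysis on where $a_N^2$ falls relative to the $\lambda_j$, and then the induction closes routinely.
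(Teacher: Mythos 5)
The paper itself offers no proof of Theorem \ref{shthm}; it is quoted from \cite{CL1}, so there is nothing in-text to compare against. Your high-level reduction is the standard one and, if you are willing to quote the classical Schur--Horn theorem for Hermitian matrices, it does yield a complete proof: by Theorem \ref{thm2.3}, condition (1) is equivalent to the existence of an $M\times N$ matrix $A$ with $AA^*=\diag(\lambda_1,\dots,\lambda_M)$ and column norms $a_n$, hence to the existence of a rank-$M$ positive semidefinite Gram matrix $G=A^*A$ with spectrum $(\lambda_1,\dots,\lambda_M,0,\dots,0)$ and diagonal $(a_1^2,\dots,a_N^2)$. Schur's half of the classical theorem then gives (1)$\Rightarrow$(2) --- note that your intermediate attempt with the spectral projection $Q$ of $S$ cannot be salvaged, since $\|Qf_n\|\le a_n$ points the wrong way, but the Gram-matrix/Ky Fan version you fall back on is correct --- and Horn's half gives (2)$\Rightarrow$(1), after the (correct) observation that the partial-sum inequalities for $k>M$ follow from the total-sum equality.

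The genuine gaps are in the self-contained inductive construction you propose for (2)$\Rightarrow$(1). First, your base case is wrong: for $N=M$ the majorization conditions do \emph{not} force $a_n^2=\lambda_n$. Take $M=N=2$, $\lambda=(3,1)$, $a^2=(2,2)$: all inequalities hold, yet the required frame is a genuinely rotated basis (essentially the block $A(x)$ of (\ref{eqn:a}) with rescaled rows). The square case is exactly where the content of Horn's theorem lives, so declaring it trivial assumes away the hard part. Second, the pivot of your inductive step --- that one can always find $\lambda_p\ge a_N^2\ge\lambda_q$ because ``$\min_j\lambda_j\le a_N^2$'' --- is false: with $M=2$, $N=3$, $\lambda=(3,3)$, $a^2=(2,2,2)$ one has $a_N^2=2<3=\min_j\lambda_j$, and with $M=1$, $N=2$, $\lambda_1=2$, $a^2=(1,1)$ it fails even more simply. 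In that regime the correct move is a $1\times1$ downdate $f_N=a_Ne_p$ subtracting $a_N^2$ from a single eigenvalue, followed by a separate verification that the reduced pair still majorizes; your case analysis must therefore include a case your sketch explicitly rules out, and its conclusion cannot be the two-eigenvalue interlacing picture you describe. Both defects are repairable --- this is essentially the construction carried out in \cite{CL1} and \cite{eigensteps} --- but as written your induction neither starts nor steps correctly.
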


Based on Theorem \ref{shthm}, the existence of a frame $\{f_n\}_{n=1}^N$ for $\mathcal{H}_M$ with vector norms $\{a_n\}_{n=1}^N$ and spectrum $\{\lambda_m\}_{m=1}^M$ is characterized by conditions (1) and (2). In particular, properties (1) and (2) state that an $N$-element frame in $\mathcal{H}_M$ with lengths $\{a_n\}_{n=1}^N$ and eigenvalues $\{\lambda_m\}_{m=1}^M$ of the frame operator exists if and only if the sequence of eigenvalues $\{\lambda_m\}_{m=1}^M$ {\it majorizes} the sequence of square norms $\{a_n^2\}_{n=1}^N$. Explicitly this means:

\begin{definition}\label{majorization}
After arranging both sequences, $\{a_n\}_{n=1}^N$ and $\{\lambda_m\}_{m=1}^M$, in non-increasing order, if $\sum_{i=1}^na_i^2 \leq \sum_{i=1}^n \lambda_i$ for every $n=1,\dots,M$ and $\sum_{i=1}^Na_i^2=\sum_{i=1}^M\lambda_i$, then $\{\lambda_m\}_{m=1}^M$ {\it majorizes} $\{a_n^2\}_{n=1}^N$. We denote this by $\{\lambda_m\}_{m=1}^M \succeq   \{a_n^2\}_{n=1}^N$. Moreover, if $M \neq N$ then add zeroes to the end of the shorter sequence to make them the same length. 
\end{definition}

From Theorem \ref{shthm} it follows that,

\begin{corollary} \cite{CFKT}\label{shcor}
For every $N\ge M$ and every invertible, positive, self-adjoint operator $S$ on $\mathcal{H}_M$
there exists an equal norm frame for $\mathcal{H}_M$ with $N$-elements and
frame operator $S$.  In particular, there exists an equal norm Parseval frame with
$N$-elements in $\mathcal{H}_M$ for every $N \geq M$.
\end{corollary}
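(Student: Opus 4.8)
The plan is to deduce this directly from the Schur--Horn Theorem (Theorem \ref{shthm}). Write $\lambda_1 \ge \lambda_2 \ge \cdots \ge \lambda_M > 0$ for the eigenvalues of $S$ and set $T = \sum_{m=1}^M \lambda_m = \operatorname{tr}(S)$. Since we seek an \emph{equal norm} frame $\{f_n\}_{n=1}^N$, all vector norms must coincide, say $\|f_n\| = a$ for every $n$; the trace (equality) half of condition (2) in Theorem \ref{shthm} then forces $N a^2 = T$, i.e.\ $a^2 = T/N$. Thus the only candidate is the constant norm sequence $a_n = \sqrt{T/N}$, and it remains only to verify that this choice satisfies condition (2).

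First I would check the majorization inequalities. For each $k \le M$ we need $\sum_{j=1}^k a_j^2 = kT/N \le \sum_{j=1}^k \lambda_j$. Because $\lambda_1 \ge \cdots \ge \lambda_M$, the sum of the $k$ largest eigenvalues is at least $k/M$ times the sum of all $M$ of them, so $\sum_{j=1}^k \lambda_j \ge \tfrac{k}{M}\sum_{j=1}^M \lambda_j = \tfrac{kT}{M}$. Since $N \ge M$, this gives $\tfrac{kT}{N} \le \tfrac{kT}{M} \le \sum_{j=1}^k \lambda_j$, exactly as required, and the total-sum equality $\sum_{n=1}^N a_n^2 = T = \sum_{m=1}^M \lambda_m$ holds by our choice of $a$. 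Hence Theorem \ref{shthm} produces a frame $\{f_n\}_{n=1}^N$ for $\mathcal{H}_M$ with frame operator $S$ and $\|f_n\| = \sqrt{T/N}$ for all $n$, which proves the first assertion.

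For the last sentence I would simply apply the statement just proved to $S = I$, the identity on $\mathcal{H}_M$, which is invertible, positive and self-adjoint with every eigenvalue equal to $1$ (so $T = M$). The resulting $N$-element equal norm frame has frame operator the identity, hence is a Parseval frame, with common norm $\sqrt{M/N}$.

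I do not expect a genuine obstacle: the entire content is the observation that the hypothesis $N \ge M$ is precisely what makes the flat norm sequence $a_n \equiv \sqrt{T/N}$ majorized by the eigenvalue sequence. The only point needing a moment's care is the elementary partial-sum inequality $\sum_{j=1}^k \lambda_j \ge \tfrac{k}{M}T$ for a non-increasing sequence, equivalently the statement that $(\lambda_1,\dots,\lambda_M)$ majorizes the constant sequence $(T/M,\dots,T/M)$.
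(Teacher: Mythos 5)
Your proposal is correct and is exactly the derivation the paper intends: the corollary is presented as an immediate consequence of the Schur--Horn Theorem, and your verification that the constant sequence $a_n \equiv \sqrt{T/N}$ is majorized by the eigenvalue sequence (using $N \ge M$ together with the elementary fact that the top $k$ eigenvalues sum to at least $\tfrac{k}{M}T$) is precisely the missing computation. The specialization to $S = I$ for the Parseval case is likewise the standard conclusion.
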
 

Both Theorem \ref{shthm} and Corollary \ref{shcor} are helpful in the sense that they guaranteed that the frames we were searching for must exist, but did not give any help in actually finding the required frames.  Spectral Tetris provided the first major construction technique for a wide variety of frames and fusion frames.


\section{Spectral Tetris Frame Constructions: The Basics of Spectral Tetris}\label{basicst}

Spectral Tetris was introduced in ``Constructing tight fusion frames," \cite{CFMWZ09}, as a method for constructing sparse, unit norm, tight frames and sparse, unit weighted, tight fusion frames via a quick and easy to use algorithm. We start with an example which illustrates the basics of Spectral Tetris for unit norm, tight frames (UNTFs). Note that we will call any frame constructed via Spectral Tetris, a {\it Spectral Tetris frame}.

Before we begin our example, let us go over a few necessary facts for construction. Recall, that in order to construct an $N$-element UNTF in $\mathcal{H}_M$, we will construct an $M \times N$ synthesis matrix having the following properties:
\begin{enumerate}
\item The columns square sum to one, to obtain unit norm vectors.
\item The rows are orthogonal, which is equivalent to the frame operator, $S$, being a diagonal $M \times M$ matrix.
\item The rows have constant norm, to obtain tightness, meaning that $S=cI$ for some constant $c$, where $I$ is the $M \times M$ identity matrix. 
\end{enumerate}

\begin{remark} Since we will be constructing $N$-element UNTFs in $\mathcal{H}_M$, recall that the frame bound will be $c=\frac{N}{M}$. \end{remark}

Also, before construction of a frame is possible, we must first ensure that such a frame exists by checking that the spectrum of the frame majorizes the square vector norms of the frame. However, this is not the only constraint. For Spectral Tetris to work, we also require that the frame has redundancy of at least 2, that is $N \geq 2M$, where $N$ is the number of frame elements and $M$ is the dimension of the Hilbert space. For a UNTF, since our unique eigenvalue is $\frac{N}{M}$, we see that this is equivalent to the requirement that the eigenvalue of the frame is greater than or equal to 2.  

The main idea of Spectral Tetris is to iteratively construct a synthesis matrix, $T^*$, for a UNTF one to two vectors at a time, which satisfies properties (1) and (2) at each step and gets closer to and eventually satisfies property (3) when complete. When it is necessary to build two vectors at a time throughout the Spectral Tetris process, we will utilize the following key $2 \times 2$ matrix as a building block for our construction.

Spectral Tetris relies on the existence of $2 \times 2$ matrices $A\left(x\right)$, for given $0\leq x \leq 2$, such that:
\begin{enumerate}
\item the columns of $A\left(x\right)$ square sum to $1$,
\item $A\left(x\right)$ has orthogonal rows,
\item the square sum of the first row is $x$.
\end{enumerate} 
These properties combined are equivalent to $$A\left(x\right)A^*\left(x\right)=\left[\begin{array}{cc}
x&0\\
0&2-x
\end{array}\right].$$

A matrix which satisfies these properties and which is used as a building block in Spectral Tetris is: 

\begin{eqnarray}
\label{eqn:a}  
A\left(x\right)=\left[\begin{array}{cc}
\sqrt{\frac{x}{2}}& \sqrt{\frac{x}{2}}\\ 
\sqrt{1-\frac{x}{2}}&-\sqrt{1-\frac{x}{2}}
\end{array}\right].
\end{eqnarray}

We start with an example of how the Spectral Tetris algorithm works. 

\begin{example}\label{stcex}
We would like to use Spectral Tetris to construct a sparse, unit norm, tight frame with 11 elements in $\mathcal{H}_4$, so our tight frame bound will be $\frac{11}{4}$.

To do this we will create a $4 \times 11$ matrix $T^*$, which satisfies the following conditions:
\begin{enumerate}
\item The columns square sum to $1$.
\item $T^*$ has orthogonal rows.
\item The rows square sum to $\frac{11}{4}$.
\item $S=T^*T=\frac{11}{4}I$.
\end{enumerate}

Note that (4) follows if (1), (2) and (3) are all satisfied. First note that, although it is clear for UNTFs, our sequence of eigenvalues $\{\lambda_m\}_{m=1}^4=\{\frac{11}{4},\frac{11}{4},\frac{11}{4},\frac{11}{4}\}$ majorizes the sequence of our square norms $\{a_n^2\}_{n=1}^{11}=\{1,1,1,1,1,1,1,1,1,1,1\}$, which, in general, is necessary for such a frame to exist. 
 
Define $t_{i,j}$ to be the entry in the $i^{th}$ row and $j^{th}$ column of $T^*$. With an empty $4 \times 11$ matrix, we start at $t_{1,1}$ and work our way left to right to fill out the matrix. By requirement (1), we need the square sum of column one to be 1 and by requirement (2) we need the square sum of row one to be $\frac{11}{4} \geq 1$. Hence, we will start by being greedy and put the maximum weight of 1 in $t_{1,1}$. This forces the rest of the entries in column 1 to be zero, from requirement (1). We get:
$$T^*=\left[\begin{array}{ccccccccccc}
1&\cdot&\cdot&\cdot&\cdot&\cdot&\cdot&\cdot&\cdot&\cdot&\cdot\\
0&\cdot&\cdot&\cdot&\cdot&\cdot&\cdot&\cdot&\cdot&\cdot&\cdot\\
0&\cdot&\cdot&\cdot&\cdot&\cdot&\cdot&\cdot&\cdot&\cdot&\cdot\\
0&\cdot&\cdot&\cdot&\cdot&\cdot&\cdot&\cdot&\cdot&\cdot&\cdot
\end{array} \right].$$

Next, since row one needs to square sum to $\frac{11}{4}$, by (3), and we only have a total weight of 1 in row one, then we need to add $\frac{11}{4}-1=\frac{7}{4}=1+\frac{3}{4}\geq 1$ more weight to row one. So we will again be greedy and add another 1 in $t_{1,2}$. This forces the rest of the entries in column 2 to be zero, by (1). Also note that we have a total square sum of 2 in row one. We get:
$$T^*=\left[\begin{array}{ccccccccccc}
1&1&\cdot&\cdot&\cdot&\cdot&\cdot&\cdot&\cdot&\cdot&\cdot\\
0&0&\cdot&\cdot&\cdot&\cdot&\cdot&\cdot&\cdot&\cdot&\cdot\\
0&0&\cdot&\cdot&\cdot&\cdot&\cdot&\cdot&\cdot&\cdot&\cdot\\
0&0&\cdot&\cdot&\cdot&\cdot&\cdot&\cdot&\cdot&\cdot&\cdot
\end{array} \right].$$

In order to have a total square sum of $\frac{11}{4}$ in the first row, we need to add a total of $\frac{11}{4}-2=\frac{3}{4}<1$ more weight. If the remaining unknown entries are chosen so that $T^*$ has orthogonal rows, then $S$ will be a diagonal matrix. Currently, the diagonal entries of $S$ are mostly unknowns, having the form $\{2+?,\cdot,\cdot,\cdot\}$. Therefore we need a way to add $\frac{3}{4}$ more weight in the first row without compromising the orthogonality of the rows of $T^*$ nor the normality of its columns. That is, if we get ``greedy" and try to add $\sqrt{\frac{3}{4}}$ to position $t_{1,3}$ then the rest of row one must be zero, yielding:

$$T^*=\left[\begin{array}{ccccccccccc}
1&1&\sqrt{\frac{3}{4}}&0&0&0&0&0&0&0&0\\
0&0&\cdot&\cdot&\cdot&\cdot&\cdot&\cdot&\cdot&\cdot&\cdot\\
0&0&\cdot&\cdot&\cdot&\cdot&\cdot&\cdot&\cdot&\cdot&\cdot\\
0&0&\cdot&\cdot&\cdot&\cdot&\cdot&\cdot&\cdot&\cdot&\cdot
\end{array} \right].$$

In order for column three to square sum to one, at least one of the entries $t_{2,3}, t_{3,3}$ or $t_{4,3}$ is non-zero. But then, it is impossible for the rows to be orthogonal and thus we cannot proceed. Hence, we need to instead add two columns of information in attempts to satisfy these conditions. The key idea is to utilize our $2 \times 2$ building block, $A\left(x\right)$, as defined at (\ref{eqn:a}).

We define the third and fourth columns of $T^*$ according to such a matrix $A(x)$, where $x=\frac{11}{4}-2=\frac{3}{4}$. Notice that by doing this, column three and column four now square sum to one  within the first two rows, hence the rest of the unknown entries in these two columns will be zero. We get:
$$T^*=\left[\begin{array}{ccccccccccc}
1&1&\sqrt{\frac{3}{8}}&\sqrt{\frac{3}{8}}&\cdot&\cdot&\cdot&\cdot&\cdot&\cdot&\cdot\\
0&0&\sqrt{\frac{5}{8}}&-\sqrt{\frac{5}{8}}&\cdot&\cdot&\cdot&\cdot&\cdot&\cdot&\cdot\\
0&0&0&0&\cdot&\cdot&\cdot&\cdot&\cdot&\cdot&\cdot\\
0&0&0&0&\cdot&\cdot&\cdot&\cdot&\cdot&\cdot&\cdot
\end{array} \right].$$

The diagonal entries of $T^*$ are now $\{\frac{11}{4}, \frac{5}{4}+?,\cdot,\cdot\}$. The first row of $T^*$, and equivalently the first diagonal entry of $S$, now have sufficient weight and so its remaining entries are set to zero. The second row, however, is currently falling short by $\frac{11}{4}-\left(\left(\sqrt{\frac{5}{8}}\right)^2 + \left(-\sqrt{\frac{5}{8}}\right)^2\right)=\frac{6}{4}=1+\frac{2}{4}$. Since $1+\frac{2}{4} \geq 1$, we can be greedy and add a weight of 1 in $t_{2,5}$. Hence, column five becomes $e_2$. Next, with a weight of $\frac{2}{4}<1$ left to add to row two we utilize our $2 \times 2$ building block $A\left(x\right)$, with $x=\frac{2}{4}$. Adding this $2 \times 2$ block in columns six and seven yields sufficient weight in these columns and hence we finish these two columns with zeros. We get: 

$$T^*=\left[\begin{array}{ccccccccccc}
1&1&\sqrt{\frac{3}{8}}&\sqrt{\frac{3}{8}}&0&0&0&0&0&0&0\\
0&0&\sqrt{\frac{5}{8}}&-\sqrt{\frac{5}{8}}&1&\sqrt{\frac{2}{8}}&\sqrt{\frac{2}{8}}&0&0&0&0\\
0&0&0&0&0&\sqrt{\frac{6}{8}}&-\sqrt{\frac{2}{8}}&\cdot&\cdot&\cdot&\cdot\\
0&0&0&0&0&0&0&\cdot&\cdot&\cdot&\cdot
\end{array} \right].$$

The diagonal entries of $T^*$ are now $\{\frac{11}{4}, \frac{11}{4},\frac{6}{4}+?,\cdot\}$, where the third diagonal entry, and equivalently the third row, are falling short by $\frac{11}{4}-\frac{6}{4}=\frac{5}{4}=1+\frac{1}{4}$. Since $1+\frac{1}{4} \geq 1$, then we take the eighth column of $T^*$ to be $e_3$. We will complete our matrix following these same strategies, by letting the ninth and tenth columns arise from $A\left(\frac{1}{4}\right)$, and making the final column $e_4$, yielding the desired UNTF:

$$T^*=\left[\begin{array}{ccccccccccc}
1&1&\sqrt{\frac{3}{8}}&\sqrt{\frac{3}{8}}&0&0&0&0&0&0&0\\
0&0&\sqrt{\frac{5}{8}}&-\sqrt{\frac{5}{8}}&1&\sqrt{\frac{2}{8}}&\sqrt{\frac{2}{8}}&0&0&0&0\\
0&0&0&0&0&\sqrt{\frac{6}{8}}&-\sqrt{\frac{2}{8}}&1&\sqrt{\frac{7}{8}}&\sqrt{\frac{7}{8}}&0\\
0&0&0&0&0&0&0&0&\sqrt{\frac{7}{8}}&-\sqrt{\frac{7}{8}}&1
\end{array} \right].$$

\end{example}

In this construction, column vectors are either introduced one at a time, such as columns $1,2,5,8,$ and $11$, or in pairs, such as columns $\{3,4\}, \{6,7\},$ and $\{9,10\}$. Each singleton contributes a value of 1 to a particular diagonal entry of $T^*$, while each pair spreads two units of weight over two entries. Overall, we have formed a flat spectrum, $\{\frac{11}{4},\frac{11}{4},\frac{11}{4},\frac{11}{4}\}$, from blocks of area one or two. This construction is reminiscent of the game Tetris, as we fill in blocks of mixed area to obtain a flat spectrum.

From this construction it is clear that $T^*$ has a large number of zero entries, which happens to be a nice property that the Spectral Tetris construction produces. This is known as sparsity and recall that a vector in $\mathcal{H}_M$ which can be represented by only $0 \leq k \leq M$ basis elements, is called {\it $k$-sparse}.
Hence, in Example \ref{stcex}, column one of $T^*$ is 1-sparse and column three is 2-sparse. The sparsity of $T^*$ in Example \ref{stcex} is not ad-hoc; a major advantage to using Spectral Tetris is the sparsity of the synthesis matrix which it constructs. It has been shown in \cite{CHKK}, that tight Spectral Tetris frames are optimally sparse in the sense that given $N \geq 2M$, the synthesis matrix of the $N$-element unit norm, tight Spectral Tetris frame for $\mathcal{H}_M$ is sparsest among all synthesis matrices of $N$-element unit norm, tight frames for $\mathcal{H}_M$. Next we present this sparsity result from \cite{CHKK}; but first we give a few necessary definitions and results.

\begin{definition}
Let $N\geq M>0$.

\begin{itemize}

\item  Let the real values $\lambda_1,\dots,\lambda_M \geq 2$ satisfy $\sum_{m=1}^M\lambda_m=N$. Then the class of unit norm frames $\{f_n\}_{n=1}^N$ in $\mathcal{H}_M$ whose frame operator has eigenvalues $\lambda_1,\dots,\lambda_M$ will be denoted by {\em $\mathcal{F}\left(N,\{\lambda_m\}_{m=1}^M\right)$}.

\item The $N$-element Spectral Tetris frame with eigenvalues $\lambda_1,\dots,\lambda_M\geq 2$ will be denoted by {\em $STF\left(N;\lambda_1,\dots,\lambda_M\right)$.}
\end{itemize}
\end{definition}

\begin{definition}
A finite sequence of real values $\lambda_1,\dots, \lambda_M$ is {\em ordered blockwise}, if for any permutation $\pi$ of $\{1,\dots, M\}$ the set of partial sums $\{\sum_{m=1}^s\lambda_m: s=1,\dots M\}$ contains at least as many integers as the set $\{\sum_{m=1}^s\lambda_{\pi\left(m\right)} :s=1,\dots,M\}$. The {\em maximal block number} of a finite sequence of real values $\lambda_1,\dots, \lambda_M$, denoted by $\mu\left(\lambda_1,\dots,\lambda_M\right)$, is the number of integers in $\{\sum_{m=1}^s\lambda_{\sigma \left(m\right)}:s=1,\dots,M\}$, where $\sigma$ is a permutation of $\{1,\dots, M\}$ such that $\lambda_{\sigma\left(1\right)},\dots,\lambda_{\sigma\left(M\right)}$ is ordered blockwise.
\end{definition}

Now we will present a useful theorem which provides a sparsity bound for any frame in $\mathcal{F}\left(N,\{\lambda_m\}_{m=1}^M\right)$.

\begin{lemma}\label{thm4.4g}\cite{CHKK}
Let $N\geq M >0$ and let the real values $\lambda_1,\dots,\lambda_M \geq 2$ satisfy $\sum_{m=1}^M\lambda_m=N$. Then any frame in $\mathcal{F}\left(N,\{\lambda_m\}_{m=1}^M\right)$ has sparsity at least $N+2\left(M-\mu\left(\lambda_1,\dots,\lambda_M\right)\right)$ with respect to any orthonormal basis of $\mathcal{H}_M$.
\end{lemma}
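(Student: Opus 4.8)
The plan is to establish the sparsity lower bound by relating the number of nonzero entries of a synthesis matrix to the structure of its rows, and then invoking the Schur--Horn majorization machinery together with a counting argument about integer partial sums. First I would fix a frame $\{f_n\}_{n=1}^N$ in $\mathcal{F}(N,\{\lambda_m\}_{m=1}^M)$ and, without loss of generality (by Theorem \ref{thm2.3} and the remark following it, since any orthonormal basis change permits us to work against the eigenbasis), represent it by an $M\times N$ synthesis matrix $A$ with orthogonal rows whose $m$-th row squares to $\lambda_m$. The key observation is that each row is a unit-norm-scaled vector in $\ell_2(N)$ of squared length $\lambda_m\ge 2$, the columns square-sum to one, and orthogonality of rows is a strong structural constraint. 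I would count nonzero entries via the columns: a column contributes at least one nonzero entry, so we get at least $N$ nonzeros for free; the content of the lemma is that we must pay an extra $2$ for each ``block'' beyond the maximal block number $\mu$, i.e. an extra $2(M-\mu)$.

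The heart of the argument is a combinatorial/linear-algebraic lemma about how the supports of the rows can overlap. The plan is to decompose the column index set $\{1,\dots,N\}$ according to which rows are supported there. Consider the bipartite support pattern: if the nonzero columns of row $m$ could be made disjoint from those of row $m'$ for as many pairs as possible, we would minimize overlaps and hence minimize the total count. But disjointness of the supports of two rows $m,m'$ forces (because each row squares to an integer-or-not value and columns square to one) a compatibility condition on partial sums: precisely, a collection of rows whose supports are pairwise disjoint must have $\sum_{m\in \text{collection}}\lambda_m\in\mathbb{Z}$, since those rows together occupy a whole number of unit-norm columns and nothing else. Thus the rows partition into ``blocks'' with disjoint supports, each block having integer eigenvalue-sum; the maximal number of such blocks is exactly $\mu(\lambda_1,\dots,\lambda_M)$ by definition of the maximal block number (after arguing that the blockwise-ordered permutation realizes the optimum). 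Every time two consecutive rows have overlapping support rather than being split into separate blocks, the overlap costs at least one shared column carrying at least two nonzero entries beyond the baseline, and a careful accounting shows the total overhead is at least $2(M-\mu)$.

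I would then assemble these pieces: baseline $N$ nonzeros from the columns, plus $2$ extra nonzeros for each of the $M-\mu$ ``fused'' rows that cannot be isolated into their own block, giving sparsity at least $N+2(M-\mu(\lambda_1,\dots,\lambda_M))$. The independence from the choice of orthonormal basis follows because the bound is invariant under the orthogonal change of coordinates used to pass to the eigenbasis of $S$, and sparsity with respect to any basis is at least that with respect to the eigenbasis once one checks the rotation cannot increase disjointness of supports beyond what the integer-partial-sum obstruction allows.

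The main obstacle I anticipate is making rigorous the step ``overlapping supports cost at least two extra nonzeros, and the minimal number of blocks is $\mu$.'' The subtlety is that the rows are not literally free to be chosen with arbitrary overlap patterns: orthogonality of rows combined with unit columns constrains which support patterns are realizable, and one must show that (a) \emph{any} realizable pattern decomposes into at most $\mu$ integer-sum blocks, and (b) the nonzero count is genuinely minimized by the Spectral Tetris pattern, which realizes exactly $\mu$ blocks and the baseline-plus-$2(M-\mu)$ count. Handling (a) requires the blockwise-ordering argument: showing that among all permutations $\pi$, the one maximizing the number of integer partial sums is the one that actually governs realizable disjoint decompositions, which is essentially a rearrangement/exchange argument on the $\lambda_m$. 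Handling (b) requires showing that each non-integer ``breakpoint'' in the partial sums forces a $2\times 2$ Spectral-Tetris-type block spanning two rows, contributing the unavoidable extra pair of nonzeros; I expect this to be the most delicate bookkeeping, and it is where one leans hardest on the explicit form of $A(x)$ in \eqref{eqn:a} and the fact that a column with two nonzero entries straddling rows $m,m+1$ is forced precisely when $\sum_{j\le m}\lambda_j\notin\mathbb{Z}$.
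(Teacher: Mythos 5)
The paper does not actually contain a proof of Lemma \ref{thm4.4g}: it is quoted from \cite{CHKK}, and the only argument given in the text is the proof of Theorem \ref{thmsparse}, i.e.\ the matching \emph{achievability} count for the Spectral Tetris frame. So your proposal has to stand on its own. Its architecture is the right one and agrees with the cited source: partition the columns into connected components of the support pattern, observe that each component spans a coordinate subspace on which the frame operator acts invariantly with integer trace (the number of unit-norm vectors in the component), conclude that the number of components $r$ satisfies $r\le\mu(\lambda_1,\dots,\lambda_M)$, and then charge extra nonzero entries to each of the $M-r$ rows that cannot be split off. (One phrasing to fix: the integrality constraint attaches to connected components of the row--column incidence structure, not to ``collections of rows with pairwise disjoint supports''; two rows can be disjoint from each other yet both overlap a third, and then their eigenvalue sum need not be an integer.)

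There are, however, two genuine gaps. First, the factor of $2$. A column shared by two overlapping rows has at least two nonzero entries, which is only \emph{one} entry beyond the one-per-column baseline; so the mechanism you describe yields $N+(M-\mu)$, not $N+2(M-\mu)$. The missing ingredient is that, in the eigenbasis, the rows of the synthesis matrix are orthogonal, and two orthogonal rows whose supports meet must meet in at least \emph{two} columns (a single common column would make their inner product a single nonzero product). Each spanning-tree edge of a component therefore carries at least two shared columns, and a careful count (columns shared by several tree edges must have correspondingly larger support) gives the overhead $2(M_j-1)$ per component. Appealing to the explicit block $A(x)$ of \eqref{eqn:a} cannot close this: the lemma is a lower bound over \emph{all} frames in $\mathcal{F}\left(N,\{\lambda_m\}_{m=1}^M\right)$, not only those built by Spectral Tetris. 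Second, the reduction to the eigenbasis. The lemma asserts the bound for every orthonormal basis, and in a non-eigenbasis the rows are no longer orthogonal, so the two-column intersection argument is unavailable; your claim that sparsity with respect to an arbitrary basis dominates sparsity with respect to the eigenbasis is asserted rather than proved, and it is not an obvious fact (only the component-counting step $r\le\mu$ survives verbatim in an arbitrary basis, since each support-component still carries an integer trace). Both points need real arguments before the proof is complete.
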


It is important to note that optimally sparse UNTFs in $\mathcal{F}\left(N,\{\lambda_m\}_{m=1}^M\right)$ are not uniquely determined. We will now give an example of two different UNTFs in $\mathcal{F}\left(9,\{\frac{9}{4}\}_{i=1}^9\right)$ with $M=4$ and $N=9$ which both achieve the optimal sparsity $9 + 2\left(4- 1\right) = 15$:
\[\left[\begin{array}{ccccccccc}
1&1&\sqrt{\frac{1}{8}}&\sqrt{\frac{1}{8}}&0&0&0&0&0\\
0&0&\sqrt{\frac{7}{8}}&-\sqrt{\frac{7}{8}}&\sqrt{\frac{1}{4}}&\sqrt{\frac{1}{4}}&0&0&0\\
0&0&0&0&\sqrt{\frac{3}{4}}&-\sqrt{\frac{3}{4}}&\sqrt{\frac{3}{8}}&\sqrt{\frac{3}{8}}&0\\
0&0&0&0&0&0&\sqrt{\frac{5}{8}}&-\sqrt{\frac{5}{8}}&1
\end{array}\right] \]

and

\[\left[\begin{array}{ccccccccc}
1&\sqrt{\frac{5}{8}}&\sqrt{\frac{5}{8}}&0&0&0&0&0&0\\
0&\sqrt{\frac{3}{8}}&-\sqrt{\frac{3}{8}}&\sqrt{\frac{3}{8}}&\sqrt{\frac{3}{8}}&\sqrt{\frac{3}{8}}&\sqrt{\frac{3}{8}}&0&0\\
0&0&0&\sqrt{\frac{5}{8}}&-\sqrt{\frac{5}{8}}&0&0&1&0\\
0&0&0&0&0&\sqrt{\frac{5}{8}}&-\sqrt{\frac{5}{8}}&0&1
\end{array}\right].\]

Thus unit norm, tight Spectral Tetris frames are not the only optimally sparse UNTFs. However, Spectral Tetris provides an easy algorithm for the construction of such frames. Now we present the precise statement which proves that Spectral Tetris constructs optimally sparse UNTFs in $\mathcal{F}\left(N,\{\lambda_m\}_{m=1}^M\right)$.

\begin{theorem}\cite{CHKK}\label{thmsparse}
Let $N\geq M >0$, then the Spectral Tetris UNTF $\{f_n\}_{n=1}^N$ with real eigenvalues $\lambda_1,\dots, \lambda_M\geq 2$ ordered blockwise satisfying $\sum_{m=1}^M\lambda_m=N$ is optimally sparse in $\mathcal{F}\left(N,\{\lambda_m\}_{m=1}^M\right)$ with respect to the standard unit vector basis. That is, this frame is $N+2\left(M-\mu\left(\lambda_1,\dots,\lambda_M\right)\right)$-sparse with respect to the standard unit vector basis. 
\end{theorem}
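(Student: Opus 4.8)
The plan is to sandwich the sparsity of $STF(N;\lambda_1,\dots,\lambda_M)$ between the general lower bound that is already available and an explicit upper bound obtained by counting the nonzero entries the algorithm actually writes down; the two will coincide. First, note that $STF(N;\lambda_1,\dots,\lambda_M)$ belongs to $\mathcal F\!\left(N,\{\lambda_m\}_{m=1}^M\right)$: by construction its synthesis matrix has orthogonal rows, with the $m$-th row square-summing to $\lambda_m$, so by Theorem \ref{thm2.3} its frame operator has eigenvalues $\lambda_1,\dots,\lambda_M$, and it is a unit norm frame. Hence Lemma \ref{thm4.4g} applies and gives sparsity at least $N+2\left(M-\mu(\lambda_1,\dots,\lambda_M)\right)$. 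It therefore suffices to prove that the Spectral Tetris matrix has \emph{at most} $N+2\left(M-\mu(\lambda_1,\dots,\lambda_M)\right)$ nonzero entries.

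Next I would isolate the combinatorial skeleton of the algorithm. Running it amounts to $M$ successive passes, the $m$-th pass being responsible for giving row $m$ total squared weight $\lambda_m$. Pass $m$ inherits from pass $m-1$ a carryover weight $c_{m-1}\in[0,2)$ sitting in row $m$ (the second row of a block $A(\cdot)$ straddling rows $m-1$ and $m$, or $c_{m-1}=0$ if there was no such block); since $\lambda_m\ge 2>c_{m-1}$, pass $m$ can greedily append standard basis columns $e_m$ while the remaining deficit is $\ge 1$, and then close row $m$ either exactly, introducing no block, or with a single building block $A(x)$, $0<x<1$, whose first row lands in row $m$ and whose carryover $c_m=2-x\in(1,2)$ lands in row $m+1$. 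Consequently every column of the matrix is either a singleton $e_m$, contributing exactly one nonzero entry, or one of the two columns of a block $A(x)$ with $0<x<1$, contributing exactly two nonzero entries (all four entries $\pm\sqrt{x/2},\pm\sqrt{1-x/2}$ are nonzero because $0<x<1$). If $k$ blocks are used in all, there are $N-2k$ singleton columns, so the entry count is $(N-2k)\cdot 1+k\cdot 4=N+2k$.

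It remains to show $k=M-\mu(\lambda_1,\dots,\lambda_M)$, which is the crux. Put $s_m=\sum_{m'=1}^m\lambda_{m'}$, $s_0=0$. A short induction on $m$ shows that once pass $m$ finishes, the number of completed columns equals the total squared weight written so far, namely $s_m+c_m$; in particular $s_m+c_m\in\mathbb Z$. Since $c_m$ lies in $\{0\}\cup(1,2)$ — it is $0$ when pass $m$ closes without a block and in $(1,2)$ otherwise — the relation $s_m+c_m\in\mathbb Z$ forces $c_m=0\iff s_m\in\mathbb Z$. Thus pass $m$ introduces a block exactly when $s_m\notin\mathbb Z$, and the last pass introduces none because $s_M=N\in\mathbb Z$; hence $k=\left|\{m\in\{1,\dots,M-1\}:s_m\notin\mathbb Z\}\right|$. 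Finally, because $\lambda_1,\dots,\lambda_M$ is ordered blockwise, by definition exactly $\mu:=\mu(\lambda_1,\dots,\lambda_M)$ of the partial sums $s_1,\dots,s_M$ are integers, and $s_M$ is one of them, so $s_1,\dots,s_{M-1}$ contains $\mu-1$ integers and $(M-1)-(\mu-1)=M-\mu$ non-integers. Therefore $k=M-\mu$, the matrix has $N+2(M-\mu)$ nonzero entries, this meets the Lemma \ref{thm4.4g} lower bound, and optimality follows.

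The main obstacle is the bookkeeping in the induction ``completed columns $=s_m+c_m$'': one must track carefully that a pass may begin with a carryover anywhere in $(1,2)$, may place any number (possibly zero) of singleton columns, and closes with at most one new block, and then argue that this forces the carryover to vanish precisely at integer partial sums and nowhere else (this is exactly where $c_m\in\{0\}\cup(1,2)$, hence $c_m\in\mathbb Z\Rightarrow c_m=0$, is used). Once that invariant is in place, the arithmetic $(N-2k)+4k=N+2k$ and the identification of $\mu$ with the number of integer partial sums are routine, and the comparison with Lemma \ref{thm4.4g} closes the proof.
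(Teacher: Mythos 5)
Your proposal is correct and follows essentially the same route as the paper: both count the nonzero entries as $N+2k$, where $k$ is the number of $2\times 2$ blocks $A(x)$ the algorithm inserts, identify $k$ with $M-\mu\left(\lambda_1,\dots,\lambda_M\right)$ via the observation that a block straddles rows $m$ and $m+1$ exactly when the partial sum $\sum_{m'=1}^{m}\lambda_{m'}$ fails to be an integer, and then invoke Lemma \ref{thm4.4g} for optimality. Your carryover invariant $s_m+c_m\in\mathbb{Z}$ with $c_m\in\{0\}\cup\left(1,2\right)$ is a somewhat more explicit justification of that key observation than the paper's block-decomposition phrasing, but the substance is identical.
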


\begin{proof}\cite{CHKK}
Let $\{f_n\}_{n=1}^N$ be a unit norm, tight Spectral Tetris frame with eigenvalues $\lambda_1,\dots,\lambda_M \geq 2$. We will first show that its synthesis matrix has block decomposition of order $\mu:=\mu\left(\lambda_1,\dots,\lambda_M\right)$. For this, let $k_0 = 0$, and let $k_1,\dots,k_{\mu} \in \mathbb{N}$ be chosen such that $m_i
:= \sum_{m=1}^{k_i} \lambda_m$ is an integer for every $i = 1,\dots,\mu$. Moreover, let $m_0 = 0$. Further, note that $k_{\mu}= M$ and $m_{\mu} = N$, since $\sum_{m=1}^M\lambda_m$ is an integer by hypothesis. The steps of Spectral Tetris for computing STF$\left(m_1;\lambda_1,\dots,\lambda_{k_1}\right)$ and STF$\left(N;\lambda_1,\dots,\lambda_M\right)$ coincide until we reach the entry in the $k_1^{th}$ row and $m_1^{th}$ column when computing STF$\left(N;\lambda_1,\dots,\lambda_M\right)$. Therefore, the first $k_1$ entries of the first $m_1$ vectors of both constructions coincide. Continuing the computation of STF$\left(N,\lambda_1,\dots,\lambda_M\right)$ will set the remaining entries of the first $m_1$ vectors and also the first $k_1$ entries
of the remaining vectors to zero. Thus, any of the first $k_1$ vectors have disjoint support from any of the vectors constructed later on. Repeating this argument for $k_2$ until $k_{\mu}$, we obtain that the synthesis matrix has a block decomposition of order $\mu$; the corresponding partition of the frame vectors being
\[\bigcup_{i=1}^{\mu}\{f_{m_{i-1}+1},\dots,f_{m_i}\}.\]

To compute the number of non-zero entries in the synthesis matrix generated by Spectral Tetris, we let $i \in \{1,\dots,\mu\}$ be arbitrarily fixed and compute the number of non-zero entries of the vectors $f_{m_{i-1}+1},\dots,f_{m_i}$. Spectral Tetris ensures that each of the rows $k_{i-1}+1$ up to $k_i-1$ intersects the support of the subsequent row on a set of size 2, since in these rows Spectral tetris will always produce a $2 \times 2$ submatix $A\left(x\right)$ for some $0<x\leq 2$. Thus, there exist $2\left(k_i-k_{i-1}-1\right)$ frame vectors with two non-zero entries. The remaining $\left(m_i-m_{i-1}\right)-2\left(k_i-k_{i-1}-1\right)$ frame vectors will have only one entry, yielding a total number of $\left(m_i-m_{i-1}\right)+2\left(k_i-k_{i-1}-1\right)$ non-zero entries in the vectors $f_{m_{i-1}+1},\dots,f_{m_i}$.

Summarizing, the total number of non-zero entries in the frame vectors of $\{f_n\}_{n=1}^N$ is 
\[\sum_{i=1}^{\mu}\left(m_i-m_{i-1}\right)+2\left(k_i-k_{i-1}-1\right)=\]
\[\left(\sum_{i=1}^{\mu}\left(m_i-m_{i-1}\right)\right)+2\left(k_{\mu}-\left(\sum_{i=1}^{\mu}1\right)\right)=N+2\left(M-\mu\right),\]

which by Lemma \ref{thm4.4g} is the maximally achievable sparsity. 
\end{proof}

From Theorem \ref{thmsparse}, it would appear that Spectral Tetris constructs frames which are {\em only} optimally sparse with respect to the standard unit vector basis. However, if sparsity with respect to a different orthonormal basis is required, then Spectral Tetris can be modified by constructing the frame vectors with respect to this orthonormal basis instead. Moreover, this modified Spectral tetris algorithm constructs UNTFs which are optimally sparse with respect to this new orthonormal basis. Also note that this sparsity is dependent on the ordering of the given sequence of eigenvalues for which the Spectral Tetris construction is performed, which we will see in upcoming sections.

Spectral Tetris not only provides optimally sparse UNTFs, it also yields orthogonality between numerous pairs of frame vectors due to their disjoint support. This can be seen in Example \ref{stcex} where columns $t_{i,j}$ and $t_{i,j'}$ are orthogonal whenever $|j'-j| \geq 5$. More generally, any unit norm, tight {\it Spectral Tetris} frame, $\{f_n\}_{n=1}^N$ satisfies the orthogonality condition $\langle f_n, f_{n'}\rangle=0$ whenever $|n'-n| \geq \left\lfloor \frac{N}{M} \right\rfloor + 3$. This is explicitly stated in the following theorem:

\begin{theorem} \cite{CFMWZ09}
For any $M,N \in \mathbb{N}$ such that $N \geq 2M$, there exists a unit norm, tight frame $\{f_n\}_{n=1}^N $ for $\mathcal{H}_M$ with the property that $\langle f_n, f_{n'} \rangle = 0 $ whenever $|n'-n| \geq \left\lfloor \frac{N}{M} \right\rfloor+3$.
\end{theorem}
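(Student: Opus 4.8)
\medskip

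\noindent\emph{Proof strategy (plan).} The plan is to take $\{f_n\}_{n=1}^N$ to be the unit norm, tight Spectral Tetris frame for $\mathcal{H}_M$ with flat spectrum $\lambda_1=\cdots=\lambda_M=\tfrac{N}{M}$, produced by the basic algorithm of Section \ref{basicst}. This is legitimate because $N\ge 2M$ is exactly the hypothesis $\tfrac{N}{M}\ge 2$ under which that algorithm runs, and it yields an $M\times N$ synthesis matrix $T^{*}$ whose columns square sum to $1$ (unit norm), whose rows are orthogonal, and whose rows each square sum to $\tfrac{N}{M}$ (tightness). It then remains only to verify the orthogonality spacing, and I would do this entirely in terms of the supports of the columns of $T^{*}$: write $f_n$ for the $n$-th column and set $\lambda:=\tfrac{N}{M}$.

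The crucial structural fact I would establish is that, for each fixed row index $m$, the set of columns of $T^{*}$ having a nonzero entry in row $m$ is a block of \emph{consecutive} columns, say columns $a_m,a_m+1,\dots,b_m$. This is immediate from the left-to-right, row-by-row nature of the construction: the only columns that ever touch row $m$ are, in this order, the two columns of the building block $A(x_{m-1})$ from $(\ref{eqn:a})$ straddling rows $m-1$ and $m$ (present only if completing row $m-1$ left a fractional remainder $x_{m-1}\in(0,1)$), then the pure standard-basis columns $e_m$ inserted while filling row $m$, then the two columns of the block $A(x_m)$ straddling rows $m$ and $m+1$ (present only if row $m$ likewise leaves a fractional remainder $x_m\in(0,1)$); there are no other columns touching row $m$ and none interleaved among these. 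This is the same block phenomenon exploited in the proof of Theorem \ref{thmsparse}.

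Next I would bound $b_m-a_m+1$, the number of columns touching row $m$, by $\lfloor\lambda\rfloor+3$. Let $k_m\ge 0$ be the number of pure $e_m$ columns in row $m$. Weight accounting for row $m$ reads
\[
(\text{left-block contribution to row }m)+k_m+(\text{right-block contribution to row }m)=\lambda,
\]
where a present left block contributes $2-x_{m-1}\in(1,2)$ (remainders fed to a $2\times 2$ block lie strictly in $(0,1)$, as the algorithm always places a unit $1$ when the remainder is $\ge 1$), a present right block contributes $x_m\in(0,1)$, an absent block contributes $0$, and the column count equals $k_m$ plus $2$ for each present block. Running through the cases — both blocks present (the extremal one: then $\lambda-3<k_m<\lambda-1$, so the column count $k_m+4$ is an integer strictly between $\lambda+1$ and $\lambda+3$, hence at most $\lfloor\lambda\rfloor+3$), exactly one block present, neither present, and the boundary rows $m=1$ and $m=M$ where the missing block is automatically absent — one verifies that the column count is at most $\lfloor\lambda\rfloor+3$ in every case.

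Finally the conclusion is immediate: if $\langle f_n,f_{n'}\rangle\neq 0$ then columns $n$ and $n'$ share a nonzero entry in some row $m$, so both lie in $\{a_m,\dots,b_m\}$, whence $|n-n'|\le b_m-a_m\le\lfloor\lambda\rfloor+2<\lfloor N/M\rfloor+3$; contrapositively, $|n'-n|\ge\lfloor N/M\rfloor+3$ forces $\langle f_n,f_{n'}\rangle=0$. The main obstacle is the casework of the third paragraph, and the one genuinely delicate point there is the degenerate situation in which a remainder comes out exactly integral so that no $2\times 2$ block is inserted at that row: one must confirm that whenever a block \emph{is} present its contribution to the row is a true non-integer lying strictly inside the stated interval, which is precisely what makes the integer rounding land on $\lfloor N/M\rfloor+3$ rather than $\lfloor N/M\rfloor+4$.
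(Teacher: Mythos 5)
Your argument is correct. Note that the paper itself states this theorem without proof, citing \cite{CFMWZ09}; your support-counting argument --- consecutive support blocks per row, with at most $\lfloor N/M\rfloor+3$ columns touching any given row because a left block contributes weight in $(1,2)$, a right block weight in $(0,1)$, and singletons weight $1$ each --- is precisely the standard reasoning behind the result and is consistent with the block-decomposition analysis the paper uses in its proof of Theorem \ref{thmsparse}. The one delicate point you flag (that a $2\times 2$ block is inserted only when the remainder is strictly between $0$ and $1$, so the strict inequalities hold and the integer column count rounds down to $\lfloor N/M\rfloor+3$) is handled correctly.
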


We have seen that Spectral Tetris provides an easy to use construction method for unit norm, tight frames which are extremely sparse and possess nice orthogonality properties. However, Spectral Tetris requires the frame to have at least twice as may vectors as the dimension, which is illustrated in the following example:

\begin{example}
We try to use Spectral Tetris to construct a UNTF with four vectors in $\mathbb{R}^3$, so the square sums of the rows of our matrix will be $\frac{4}{3}$. First we put a one in position $t_{1,1}$ and zeros in positions $t_{2,1}$ and $t_{3,1}$. Next, we need our building block $A\left(x\right)$ for positions $t_{1,2}, t_{1,3},t_{2,2}$ and $t_{2,3}$ to get:

$$T^*=\left[\begin{array}{cccc}
1&\sqrt{\frac{1}{6}}&\sqrt{\frac{1}{6}}&\cdot\\
0&\sqrt{\frac{5}{6}}&-\sqrt{\frac{5}{6}}&\cdot\\
0&0&0&\cdot
\end{array} \right].$$

But the square sum of the entries in row two is $\frac{5}{3}$, exceeding the required eigenvalue of $\frac{4}{3}$. Thus Spectral Tetris cannot construct such a UNTF.
\end{example}

Requiring the frame to have redundancy of at least two is a small constraint, which can easily be remedied. One way around this constraint is that we can acquire a unit norm, tight frame with less vectors, $N< 2M$, by constructing a corresponding unit-norm, tight Spectral Tetris frame, satisfying $N\geq 2M$, whose Naimark complement is the unit-norm tight frame we want. Hence Spectral Tetris ultimately constructs any UNTF with $N \geq M$ so long as the majorization condition is satisfied. 


\section{Spectral Tetris Constructions for Unit Norm Tight Frames with Redundancy Less than 2}

Although the use of the Naimark complement completely solves the construction problem for UNTFs via Spectral Tetris, it would be nice to be able to construct a UNTF with $N<2M$ explicitly using Spectral Tetris and cut out the additional Naimark complement step. Because of this, Spectral Tetris was adapted to construct such frames. We will see that in certain cases, Spectral Tetris can construct a UNTF with $N$ vectors in $M$ dimensions when $M \leq N <2M$. However, in general, when $M < N < 2M$ we need to adapt the $2 \times 2$ matrix $A\left(x\right)$ at (\ref{eqn:a}) used in Spectral Tetris and instead use larger submatrices. In particular, Spectral Tetris can construct a UNTF with redundancy $\frac{N}{M} \geq \frac{3}{2}$ through the use of a $3 \times 3$ submatrix, as we now have two diagonal entries over which to spread at most three units of spectral weight. We will see that the blocks themselves are obtained by scaling the rows of a $3 \times 3$ discrete Fourier transform matrix. More generally, UNTFs with redundancy greater that $\frac{j}{j-1}$ can be constructed using $J \times J$ submatrices. However, through the use of these larger submatrices, we lose some sparsity within the frame, which inevitably reduces the orthogonality between the frame vectors. 

There are some instances when $M < N <2M$ for which the original Spectral Tetris construction method will work to construct a UNTF and we characterize this completely in the following theorem.

\begin{theorem} \label{thm4.1}\cite{CHKWA}
For $M<N<2M$ and $\lambda=\frac{N}{M}$ the following are equivalent:
\begin{enumerate}
\item The Spectral Tetris construction will successfully produce a unit norm tight frame $\{ f_n \}_{n=1}^N$ for $\mathcal{H}_M$.

\item For all $1 \leq k \leq M-1$, if $k\lambda$ is not an integer, then we have $\left\lfloor k \lambda \right\rfloor \leq \left(k+1 \right) \lambda-2$, where $\left\lfloor x \right\rfloor$ is the greatest integer less than or equal to $x$.
\end{enumerate}
\end{theorem}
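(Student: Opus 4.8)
The plan is to run the Spectral Tetris algorithm symbolically in the redundancy range $1<\lambda<2$ and reduce its success or failure to tracking a single ``carry-over'' scalar as we move down the rows of the synthesis matrix. Concretely, I would process rows $1,2,\dots,M$ in order and let $c_k\ge 0$ be the amount of weight deposited into row $k$ by the (at most one) block $A(\cdot)$ straddling rows $k-1$ and $k$, before row $k$ is treated; set $c_1=0$. When row $k$ is treated, its remaining need is $\lambda-c_k$: the algorithm appends unit columns $e_k$ while this need is $\ge 1$ and, if a fractional need $f\in(0,1)$ is left, appends the block $A(f)$, which puts $f$ into row $k$ and $2-f$ into row $k+1$. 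This gives the recursion $c_{k+1}=0$ when $\lambda-c_k\in\mathbb{Z}$ and $c_{k+1}=2-\{\lambda-c_k\}$ otherwise (here $\{x\}$ denotes the fractional part), and it pins down the only ways the construction can break: either some $c_k$ exceeds $\lambda$ (the carry-over alone overshoots the target weight of row $k$), or at the last row one is left with $\lambda-c_M\notin\mathbb{Z}$, i.e.\ a dangling block with no row $M+1$ to absorb its overflow.

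The heart of the argument is an induction showing that, as long as the algorithm has not already failed, $c_{k+1}=2-\{k\lambda\}$ when $k\lambda\notin\mathbb{Z}$ and $c_{k+1}=0$ when $k\lambda\in\mathbb{Z}$; the base case is $c_1=0$, and since $\lambda\in(1,2)$ one checks that in the first case $c_{k+1}\in(1,2)$, which keeps the inductive step self-consistent. Feeding this formula back into the overflow test $c_{k+1}\le\lambda$ turns it into $2-\{k\lambda\}\le\lambda$, that is $\{k\lambda\}\ge 2-\lambda$, and the identity $\lfloor k\lambda\rfloor=k\lambda-\{k\lambda\}$ rewrites this as $\lfloor k\lambda\rfloor\le(k+1)\lambda-2$ --- precisely the inequality of condition (2). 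Hence ``no row among $2,\dots,M$ overflows'' is equivalent to ``(2) holds for every $k=1,\dots,M-1$''. To finish the equivalence I would dispatch the remaining obstructions: the dangling-block obstruction at row $M$ cannot occur because $N=M\lambda\in\mathbb{Z}$ forces $(M-1)\lambda\notin\mathbb{Z}$, so $c_M=2-\{(M-1)\lambda\}$ and $\lambda-c_M=M\lambda-\lfloor(M-1)\lambda\rfloor\in\mathbb{Z}$, meaning row $M$ is completed by unit columns alone; and whenever the run completes, the resulting $M\times N$ matrix has unit-norm columns, every row square-summing to $\lambda=N/M$, and pairwise-orthogonal rows (consecutive rows meet only inside a common block $A(f)$, whose two rows are orthogonal, while non-consecutive rows have disjoint support), so by Theorem \ref{thm2.3} it is the synthesis matrix of a UNTF. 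This establishes (1)$\Leftrightarrow$(2).

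The step I expect to need the most care is the bookkeeping around the ``reset'' indices, i.e.\ those $k$ with $k\lambda\in\mathbb{Z}$, where the straddling block disappears, row $k+1$ starts empty, and the process re-enters the ``one unit column, then one block'' pattern seen at row $1$. There one must verify the invariant survives the transition: $\{k\lambda\}=0$ gives $\{(k+1)\lambda\}=\{\lambda\}=\lambda-1$, so the formula predicts $c_{k+2}=2-(\lambda-1)=3-\lambda$, which is exactly the carry-over that pattern produces. Apart from this case analysis, everything reduces to manipulating fractional parts of $k\lambda$, so I anticipate no deeper obstacle; the only real ideas are spotting the closed form $c_{k+1}=2-\{k\lambda\}$ and performing the one-line algebraic translation into the stated inequality.
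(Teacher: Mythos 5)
Your proposal is correct. The survey states this theorem only with a citation to \cite{CHKWA} and gives no proof of its own, but your argument is essentially the standard one from that reference: run the greedy construction row by row, track the carry-over $c_{k+1}=2-\{k\lambda\}$ deposited by the straddling $2\times 2$ block, observe that the only possible failure is the overflow $c_{k+1}>\lambda$, and translate $c_{k+1}\le\lambda$ into $\lfloor k\lambda\rfloor\le (k+1)\lambda-2$. Your treatment of the reset indices ($k\lambda\in\mathbb{Z}$) and of the last row (where $M\lambda=N\in\mathbb{Z}$ and $\lambda\notin\mathbb{Z}$ guarantee no dangling block) closes the remaining cases, and the appeal to Theorem \ref{thm2.3} correctly converts a completed matrix into a UNTF.
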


The condition in Theorem \ref{thm4.1} completely characterizes when Spectral Tetris will work to construct UNTFs with redundancy less than 2. Combine this with the fact that Spectral Tetris is known to work for redundancy greater than or equal to 2, and we see that we have completely classified when Spectral Tetris is able to construct UNTFs. We provide an example to illustrate the conditions in Theorem \ref{thm4.1}.

\begin{example}\label{evex}
In $\mathcal{H}_4$, construct a 6-element UNTF. Hence our tight frame bound/eigenvalue is $\lambda =\frac{6}{4}=\frac{3}{2} < 2.$ Next we will check if condition (2) holds: For all $1 \leq k \leq 3$,
\begin {itemize}
\item $1\left(\frac{3}{2}\right)=\frac{3}{2}$ is not at integer and $\left\lfloor 1\left(\frac{3}{2}\right) \right\rfloor=1 \leq 1=\left(1+1\right)\frac{3}{2}-2$.
\item $2\left(\frac{3}{2}\right)=3$ is an integer.
\item $3\left(\frac{3}{2}\right)=\frac{9}{2}$ is not at integer and $\left\lfloor 1\left(\frac{9}{2}\right) \right\rfloor=4 \leq 4=\left(3+1\right)\frac{3}{2}-2$.
 \end{itemize}
Thus condition (2) holds and therefore Spectral Tetris will construct this frame. Moreover, the frame constructed by Spectral Tetris is 
$$\left[\begin{array}{cccccc}
1&\sqrt{\frac{1}{4}}&\sqrt{\frac{1}{4}}&0&0&0\\
0&\sqrt{\frac{3}{4}}&-\sqrt{\frac{3}{4}}&0&0&0\\
0&0&0&1&\sqrt{\frac{1}{4}}&\sqrt{\frac{1}{4}}\\
0&0&0&0&\sqrt{\frac{3}{4}}&-\sqrt{\frac{3}{4}}
\end{array}\right].$$
\end{example}

The condition in Theorem \ref{thm4.1} is completely determined by the value of the tight frame bound $\lambda$. Moreover, we have an equivalent classification for when Spectral Tetris can be used to construct UNTFs which relies solely on the tight frame bound/eigenvalue $\lambda$. 

\begin{theorem} \label{thm4.2}\cite{CHKWA}
Spectral Tetris can be performed to generate a unit norm, tight frame of $N$ vectors in $\mathcal{H}_M$ if and only if, when $\lambda$ is in reduced form, one of the following occur:
\begin{enumerate}
\item $\lambda := \frac{N}{M} \geq 2$ or
\item $\lambda$ is of the form $\lambda = \frac{2L-1}{L}$ for some positive integer $L$.
\end{enumerate}
\end{theorem}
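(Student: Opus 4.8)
The plan is to reduce the claim to Theorem \ref{thm4.1}, which already characterizes constructibility for $M<N<2M$ by the arithmetic condition on $\lambda$, and then translate that condition into the clean closed form $\lambda=\frac{2L-1}{L}$. Case (1), $\lambda\geq 2$, is precisely the classical situation handled in Section \ref{basicst}, so nothing new is needed there; the entire content is the case $M<N<2M$, i.e.\ $1<\lambda<2$. So I would fix $\lambda=\frac{N}{M}$ in reduced form with $M<N<2M$ and show that condition (2) of Theorem \ref{thm4.1} holds if and only if $\lambda=\frac{2L-1}{L}$ for some positive integer $L$; combining with Theorem \ref{thm4.1} then gives the stated equivalence.

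For the ``if'' direction I would substitute $\lambda=\frac{2L-1}{L}$ (so in lowest terms $N=2L-1$, $M=L$, since $\gcd(2L-1,L)=\gcd(-1,L)=1$) and verify condition (2) of Theorem \ref{thm4.1} directly. For $1\leq k\leq L-1$ we have $k\lambda=2k-\frac{k}{L}$, which is an integer exactly when $L\mid k$, hence never in the range $1\le k\le L-1$; so the hypothesis of condition (2) is always triggered. Then $\lfloor k\lambda\rfloor=2k-1$ (since $0<\frac{k}{L}<1$), while $(k+1)\lambda-2 = 2(k+1)-\frac{k+1}{L}-2 = 2k-\frac{k+1}{L}$. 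The required inequality $2k-1\le 2k-\frac{k+1}{L}$ is equivalent to $\frac{k+1}{L}\le 1$, i.e.\ $k\le L-1$, which holds throughout the range. So condition (2) holds and Theorem \ref{thm4.1} gives the construction.

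For the ``only if'' direction, suppose $1<\lambda<2$ in reduced form $\lambda=\frac{N}{M}$ satisfies condition (2) of Theorem \ref{thm4.1}. Here I expect the main obstacle: showing that the per-$k$ inequalities force the very rigid denominator structure. The idea is to track the fractional parts $\{k\lambda\}$ for $k=1,2,\dots,M-1$. Writing $\lambda = 1+\frac{N-M}{M}$ with $\gcd(N-M,M)=1$, the fractional parts $\{k\lambda\}=\{k\cdot\tfrac{N-M}{M}\}$ run over all of $\{0,\tfrac1M,\dots,\tfrac{M-1}{M}\}$ as $k$ ranges over a full residue system mod $M$. Condition (2), rewritten, says: whenever $k\lambda\notin\mathbb{Z}$, $\lfloor k\lambda\rfloor + 2 \le (k+1)\lambda$, i.e.\ $2 - \{k\lambda\} \le \lambda$, i.e.\ $\{k\lambda\} \ge 2-\lambda$. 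Since $1<\lambda<2$ this reads $\{k\lambda\}\ge 2-\lambda>0$ for every $k\in\{1,\dots,M-1\}$ with $M\nmid k$ — but such $k$ realize every nonzero fractional part $\tfrac{1}{M},\dots,\tfrac{M-1}{M}$. So we need $\tfrac1M \ge 2-\lambda$, i.e.\ $2-\lambda\le\tfrac1M$, i.e.\ $N\ge 2M-1$. Combined with $N<2M$ and $N>M$ (so $N=2M-1$, forcing $M\ge 2$), this yields exactly $\lambda=\frac{2M-1}{M}$, i.e.\ case (2) with $L=M$. One should double-check the edge cases $k$ with $k\lambda\in\mathbb Z$ impose no extra constraint (condition (2) is vacuous there) and that $L=1$ corresponds to $\lambda=1$, a single orthonormal basis, consistent with the statement. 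Finally, assembling: for $M<N<2M$ constructibility $\iff$ condition (2) (Theorem \ref{thm4.1}) $\iff \lambda=\frac{2L-1}{L}$; together with $\lambda\ge 2$ always working, this is the desired biconditional.
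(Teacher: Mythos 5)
Your overall strategy is the right one, and it is essentially the only route available from within this paper: the survey states Theorem \ref{thm4.2} as a citation to \cite{CHKWA} and gives no proof, presenting it as an ``equivalent classification'' to Theorem \ref{thm4.1}. Reducing case (2) to condition (2) of Theorem \ref{thm4.1} and then translating that condition into the closed form $\lambda=\frac{2L-1}{L}$ is exactly what is intended. Your key computations check out: the rewriting of $\lfloor k\lambda\rfloor\le(k+1)\lambda-2$ as $\{k\lambda\}\ge 2-\lambda$, the observation that the nonzero fractional parts of $k\lambda$ realize $\tfrac1M,\dots,\tfrac{M-1}{M}$ when $\gcd(N,M)=1$, and the resulting forced equality $N=2M-1$ are all correct, as is the direct verification of the ``if'' direction for $1\le k\le L-1$.

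The one genuine gap is your tacit identification of the pair $(N,M)$ with the reduced form of $\lambda$. The theorem is stated for arbitrary $N,M$ with the condition imposed on the \emph{reduced} fraction, and the Remark following it (together with Example \ref{evex}, where $N=6$, $M=4$, $\lambda=\tfrac32$) makes clear that the non-coprime case is the delicate one. Your ``if'' direction verifies condition (2) of Theorem \ref{thm4.1} only for $1\le k\le L-1$, but that condition quantifies over $1\le k\le M-1$ with $M=gL$ possibly a proper multiple of $L$; so for instance the case $N=6$, $M=4$ is not literally covered by what you wrote. The fix is routine: writing $k=qL+r$ with $1\le r\le L-1$ when $L\nmid k$, one gets $\lfloor k\lambda\rfloor=2k-q-1$ and $(k+1)\lambda-2=2k-\tfrac{k+1}{L}$, and the inequality reduces to $r\le L-1$ again. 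Similarly, in the ``only if'' direction the minimal nonzero fractional part is $\tfrac{1}{M'}$ with $M'=M/\gcd(N,M)$, and the argument goes through with $M'$ in place of $M$ (the range $1\le k\le M-1$ still hits every nonzero residue mod $M'$), yielding $\lambda=\frac{2M'-1}{M'}$. With that bookkeeping added, your proof is complete.
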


\begin{remark}
The requirement that $\lambda=\frac{M}{N}$ is in reduced form is crucial to property (2) in Theorem \ref{thm4.2}. Also, if $M$ and $N$ are known to be relatively prime, then property (2) is equivalent to $M = 2N - 1$.
\end{remark}

\begin{example}\label{4ex}
In Example \ref{evex}  we constructed a 6-element frame in $\mathcal{H}_4$ yielding the eigenvalue $\lambda=\frac{6}{4}=\frac{3}{2}=\frac{2\left(2\right)-1}{2}$. Hence by Theorem \ref{thm4.2}, Spectral Tetris can construct such a frame. Moreover, Theorem \ref{thm4.2} guarantees that Spectral Tetris can construct an $N$-element UNTF in $\mathcal{H}_4$ for all $N \geq 8$ and $N=6$. Also, since $\frac{7}{4}= \frac{2\left(4\right)-1}{4}$ then Spectral Tetris can construct a 7-element UNTF in $\mathcal{H}_4$. However, since $\frac{5}{4} \neq \frac{2L-1}{L}$ for any positive integer $L$ then Spectral Tetris cannot construct a 5-element UNTF in $\mathcal{H}_4$. Thus, Theorem \ref{thm4.2} completely classifies when Spectral Tetris can be used to construct UNTFs. However, there are cases for which we know UNTFs exist which Spectral Tetris cannot construct. Indeed, even though Spectral Tetris cannot construct a 5-element UNTF in $\mathcal{H}_4$, we know one exists because the majorization conditions on the eigenvalues and norms are satisfied in this case. 
\end{example}

It is clear that there exist UNTFs with redundancy less than two for which the conditions of Theorem \ref{thm4.2} are not satisfied. Next, we will explicitly see how Spectral Tetris can be adapted to construct such UNTFs. As mentioned earlier, in order to use Spectral Tetris to construct such a frame we will need larger building blocks than $A\left(x\right)$ at (\ref{eqn:a}). In particular, we will utilize discrete Fourier transform (DFT) matrices.

\begin{definition}
Given $M \in \mathbb{N}$, let $\omega = exp\left(\frac{2\pi k}{M}\right)$ be a primitive $M$-th root of unity. The (non-normalized) discrete Fourier transform (DFT) matrix in $\mathcal{H}_{M \times M}$ is defined by $F_M = \left( \omega^{ij}\right)_{i,j=0}^{M-1}$.
\end{definition}

\begin{remark}
DFT matrices possess the following nice properties:
\begin{enumerate}
\item The rows are orthogonal.
\item The columns are orthogonal.
\item All entries have the same modulus.
\end{enumerate}
\end{remark} 

Because of these nice properties of DFT matrices, we will use them as our building blocks; but in order to get the correct row norm and unit norm columns, we will need to alter the rows of the DFT matrix by multiplying by appropriate constants. Note that this will not affect the pairwise orthogonality of the rows.

\begin{example} \label{ex3.2}\label{dftex}
We will construct a 5-element unit norm tight frame in $\mathcal{H}_4$. Recall, in Example \ref{4ex} we showed that such a frame exists but the conventional Spectral Tetris method cannot construct this frame. 

To help illustrate why the original Spectral Tetris method fails here, we will attempt to start by adding $e_1$ in the first column. The next step would be to construct a $2\times 2$ matrix $A\left(x\right)$ where $x=\frac{3}{2}-1=\frac{1}{2}$. We get the following matrix:
$$\left[\begin{array}{ccccc}
1&\sqrt{\frac{1}{8}}&\sqrt{\frac{1}{8}}&0&0\\
0&\sqrt{\frac{7}{8}}&-\sqrt{\frac{7}{8}}&0&0\\
0&0&0&\cdot&\cdot\\
0&0&0&\cdot&\cdot
\end{array}\right].$$

Notice that row two square sums to $\frac{7}{4} > \frac{5}{4}$, so we already have too much weight in row 2 and thus this construction cannot work.

We will instead use altered DFT submatrices to construct a 5-element UNTF in $\mathcal{H}_4$. We will use the notation $\omega_M = exp\left(\frac{2\pi k}{M}\right)$. 

We will start by filling the desired $4 \times 5$ synthesis matrix with an altered $2 \times 2$ DFT matrix in the upper left corner. (Note that we could also use a standard $2 \times 2$ matrix $A\left(x\right)$ here.) The alteration we make is to multiply the entries of the first row by $\sqrt{\frac{5}{8}}$ in order to make the first row have the desired norm $\sqrt{\frac{5}{4}}$. 

In order to get unit norm columns, we need to multiply the second row of the $2 \times 2$ DFT matrix by $\sqrt{\frac{3}{8}}$. At this point we have constructed the first row and the first two columns of the desired synthesis matrix:
$$\left[\begin{array}{ccccc}
\sqrt{\frac{5}{8}}&\sqrt{\frac{5}{8}}&0&0&0\\
\sqrt{\frac{3}{8}}&\sqrt{\frac{3}{8}}\cdot \omega_2&\cdot&\cdot&\cdot\\
0&0&\cdot&\cdot&\cdot\\
0&0&\cdot&\cdot&\cdot
\end{array}\right].$$

Note that so far we have constructed a matrix whose first two rows are orthogonal regardless of how we finish filling in the second row. The second row at this point has norm $\sqrt{\frac{3}{4}}$, but we need to make it have norm $\sqrt{\frac{5}{4}}$. Hence we need to add a weight of $\sqrt{\frac{2}{4}}$ into row two. We cannot insert a $1 \times 1$ block of $\sqrt{\frac{2}{4}}$ because we would lose the orthogonality of the rows when making this column unit norm. Also we cannot insert an altered $2 \times 2$ DFT matrix in the same fashion as above because if we did we would have the following problem:

\begin{itemize}
\item To obtain the additional weight of $\sqrt{\frac{2}{4}}$ in row two, we would first need to multiply the first row of a $2 \times 2$ DFT by the factor $\sqrt{\frac{2}{8}}$.
\item Next, to obtain unit norm columns, this would force us to multiply the second row of the DFT by the factor $\sqrt{\frac{6}{8}}$.
\item Inserting this block into our synthesis matrix yields a norm of $\sqrt{\frac{12}{8}}=\sqrt{\frac{6}{4}}>\sqrt{\frac{5}{4}}$ our desired row norm. Thus we end up with too much weight in the fourth row of our synthesis matrix. 
\end{itemize}

Since we cannot insert another $2\times 2$ altered DFT, we next attempt to utilize an altered $3 \times 3$ DFT. To obtain the correct altered $3\times 3$ DFT we proceed as follows:
\begin{itemize}
\item First to obtain the additional weight of $\sqrt{\frac{2}{4}}$ in row two, we multiply the first row of a $3 \times 3$ DFT by the factor $\sqrt{\frac{2}{12}}=\sqrt{\frac{1}{6}}$.
\item Next, to obtain unit norm columns and row norms of $\sqrt{\frac{5}{4}}$ in the third and fourth row of the synthesis matrix, we multiply the second and third row of the $3 \times 3$ DFT by the factor $\sqrt{\frac{5}{12}}$.
\end{itemize}
This yields the desired $4 \times 5$ UNTF whose columns are normalized, rows are pairwise orthogonal and rows square sum to $\frac{5}{4}$.
$$\left[\begin{array}{ccccc}
\sqrt{\frac{5}{8}}&\sqrt{\frac{5}{8}}&0&0&0\\
\sqrt{\frac{3}{8}}&\sqrt{\frac{3}{8}}\cdot \omega_2&\sqrt{\frac{1}{6}}&\sqrt{\frac{1}{6}}&\sqrt{\frac{1}{6}}\\
0&0&\sqrt{\frac{5}{12}}&\sqrt{\frac{5}{12}} \cdot \omega_3&\sqrt{\frac{5}{12}}\cdot \omega_3^2\\
0&0&\sqrt{\frac{5}{12}}&\sqrt{\frac{5}{12}} \cdot \omega_3^2&\sqrt{\frac{5}{12}} \cdot \omega_3^4
\end{array}\right].$$

\end{example}

Notice that the Spectral Tetris construction in Example \ref{dftex} does follow a very similar format to our original Spectral Tetris construction; however in this latter example we are not just introducing vectors one or two at time, instead through the use of larger altered DFT submatrices we are introducing vectors in possibly larger increments corresponding to the size of these submatrices. In particular, in Example \ref{dftex} vectors are added in groups of two and three. Thus through these larger submatrices we are now able to construct UNTFs with redundancy less than two. Through all of this, we have seen the necessary and sufficient conditions for when Spectral Tetris can construct UNTFs with positive spectrum and we have seen an altered version of Spectral Tetris which allows us to construct even more UNTFs, all using an easily implemented construction process.


\section{Spectral Tetris for Non-Tight, Unit Norm Frames}\label{sfrsec}

Thus far we have seen how Spectral Tetris can construct UNTFs with a positive spectrum. Moreover, seeing that the construction of UNTFs via Spectral Tetris is now completely characterized, one might then ask the question: Can Spectral Tetris be used to construct non-tight, unit norm frames? In \cite{CCHKP10}, they answered this question positively and adapted Spectral Tetris to construct non-tight, unit norm frames with spectrum greater than or equal to two. They called this adaptation Sparse Unit Norm Frame Construction for Real Eigenvalues (SFR). Note that the spectrum of a finite frame is necessarily positive and real. Also note that since the frames SFR will construct are not necessarily tight then the rows of the frame need not square sum to the same constant. 

The SFR construction method also utilizes the $2 \times 2$ building block $A\left(x\right)$ at (\ref{eqn:a}), to help build a unit norm frame with prescribed spectrum one or two vectors at a time. In \cite{CCHKP10}, they provide sufficient conditions for when SFR can construct a unit norm frame. In particular, SFR can construct a unit norm frame $\{f_n\}_{n=1}^N$ in $\mathcal{H}_M$ with spectrum $\{\lambda_m\}_{m=1}^M \subseteq $ [2,$\infty$) if the following condition is satisfied:
\begin{itemize}
\item If $m_0$ is an integer in $\{1,\dots, M\}$, for which $\lambda_{m_0}$ is not an integer, then $\left\lfloor \lambda_{m_0} \right\rfloor \leq N-3$.
\end{itemize}
Note that this is only a sufficient condition for when SFR can construct a unit norm frame, in contrast to the necessary and sufficient conditions we previously had for Spectral Tetris constructing a UNTF. However, SFR provides the first general construction method for non-tight frames. When this condition is satisfied, we will see through a very easy to use algorithm, that SFR is very similar to the original Spectral Tetris construction method. Moreover, SFR also constructs extremely sparse frames and in fact the frames it constructs are always no more than 2-sparse.

In \cite{CCHKP10}, the authors provide an easily implementable algorithm, SFR, for constructing unit norm frames with prescribed spectrum, which is presented in Table 1.

\begin{table}[ht]
\centering
\framebox{
\begin{minipage}[ht]{4.8in}
\vspace*{0.3cm}
{\sc \underline{SFR: Sparse Unit Norm Frame Construction for Real Eigenvalues}}

\vspace*{0.4cm}

\noindent {\bf Parameters:}
\begin{itemize}
\item Dimension $M \in \mathbb{N}$.

\item Real eigenvalues $N \geq \lambda_1 \geq \dots\geq \lambda_M \geq 2$,  number of frame vectors $N$ satisfying $\sum_{m=1}^M\lambda_j=N\in\mathbb{N}$.
\end{itemize}

\noindent {\bf Algorithm:}
\begin{itemize}
\item Set $n=1$
\item For $m=1,\dots, M$ do 
		\begin{enumerate}
		\item Repeat
				\begin{enumerate}
				\item If $\lambda_m< 1$ then 
						\begin{enumerate}
						\item $f_n:=\sqrt{\frac{\lambda_m}{2}}\cdot e_m + \sqrt{1-\frac{\lambda_m}{2}}\cdot e_{m+1}$.
						\item $f_{n+1} := \sqrt{\frac{\lambda_m}{2}}\cdot e_m - \sqrt{1-\frac{\lambda_m}{2}}\cdot e_{m+1}$.
						\item $n:=n+2$.
						\item $\lambda_{m+1}:= \lambda_{m+1}-\left(2-\lambda_m\right)$.
						\item $\lambda_m:=0.$
						\end{enumerate}
				\item else
						\begin{enumerate}
						\item $f_n:=e_m$.
						\item $n:=n+1$.
						\item $\lambda_m:=\lambda_m-1$.
						\end{enumerate}
				\item{end}
				\end{enumerate}
		\item until $\lambda_m=0$.
		\end{enumerate}
\item end.
\end{itemize}

\noindent {\bf Output:}
\begin{itemize}
\item Unit norm frame $\{f_n\}_{n=1}^N$ with eigenvalues $\{\lambda_m\}_{m=1}^M$
\end{itemize}
\end{minipage}
}
\vspace*{0.2cm}
\caption{The SFR algorithm for constructing a 2-sparse, unit norm frame with a desired spectrum.}
\label{sfralg}
\end{table}

\begin{remark} 
For an explicit example of the SFR construction, see Remark \ref{sfrex}, which is based on Example \ref{sffrex} in Section \ref{sffrsection} of the present paper. 
\end{remark}

Although the SFR algorithm is the most helpful for constructing such frames, we will now provide the formal theorem which gives the sufficient conditions for when SFR can construct a unit norm frame with prescribed spectrum. 

\begin{theorem}\cite{CCHKP10}
Suppose that real values $\lambda_1 \geq \cdots \geq \lambda_M$ and $N\in \mathbb{N}$ satisfy $\sum_{j=1}^M\lambda_j=N$ (i.e. unit norm frame vectors) as well as the following conditions:
\begin{enumerate}
\item $\lambda_M \geq 2$,
\item If $m_0$ is an integer in $\{1,\dots, M\}$, for which $\lambda_{m_0}$ is not an integer, then $\left\lfloor \lambda_{m_0} \right\rfloor \leq N-3$.
\end{enumerate}

Then the eigenvalues of the frame operator of the frame $\{f_n\}_{n=1}^N$ constructed by SFR are $\{\lambda_m\}_{m=1}^M$ and the frame is 2-sparse.
\end{theorem}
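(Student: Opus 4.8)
The plan is to execute the SFR algorithm symbolically and show that, under hypotheses (1) and (2), it is well-defined, halts after emitting exactly $N$ column vectors, and produces an $M\times N$ synthesis matrix $A=T^{*}$ whose columns have unit norm and whose rows are orthogonal with the $m$-th row squaring to $\lambda_m$. The eigenvalue assertion then follows at once from the implication (3)$\Rightarrow$(1) of Theorem \ref{thm2.3}, and $2$-sparsity is read straight off the algorithm.

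First I would fix the bookkeeping. Write $\lambda_m^{(0)}$ for the input eigenvalues and, for the $m$-th pass of the outer loop, let $\widetilde\lambda_m$ be the value of the program variable $\lambda_m$ at the moment that pass begins. That pass emits $\lfloor\widetilde\lambda_m\rfloor$ singleton columns $e_m$ and then, precisely when the residual $x_m:=\widetilde\lambda_m-\lfloor\widetilde\lambda_m\rfloor$ is nonzero, one $2\times 2$ block $A(x_m)$ sitting in rows $m,m+1$ of two fresh columns; by the update $\lambda_{m+1}:=\lambda_{m+1}-(2-\lambda_m)$ this block subtracts $2-x_m$ from the running value of $\lambda_{m+1}$, so $\widetilde\lambda_{m+1}=\lambda_{m+1}^{(0)}$ when no block is formed in pass $m$ and $\widetilde\lambda_{m+1}=\lambda_{m+1}^{(0)}-(2-x_m)$ otherwise (set $x_m:=0$ when there is no block). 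I would then prove by induction on $m$: (i) $\widetilde\lambda_m>0$, so the ``repeat'' loop is entered with a strictly positive value and, as each ``else'' step lowers it by exactly $1$ while it is $\ge 1$ and the ``if'' step sends it to $0$, the loop terminates precisely at $\lambda_m=0$; (ii) whenever the ``if'' branch fires, the residual $x_m$ lies in $(0,1)\subseteq[0,2]$, so $A(x_m)$ from (\ref{eqn:a}) is legitimate and has genuinely nonzero entries in both of its rows; and (iii) the fractional part of $\widetilde\lambda_{m+1}$ equals the fractional part of the partial sum $\lambda_1^{(0)}+\dots+\lambda_{m+1}^{(0)}$. Claim (i) is where hypothesis (1) is used: any carry-in satisfies $2-x_{m-1}<2\le\lambda_m^{(0)}$, hence $\widetilde\lambda_m\ge\lambda_m^{(0)}-2+x_{m-1}>0$. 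Claim (iii) at $m+1=M$ forces $\widetilde\lambda_M$ to have fractional part $\{N\}=0$, hence to be an integer, so pass $M$ consists only of singletons $e_M$ and the algorithm never asks for a block reaching into a nonexistent row $M+1$; a telescoping count of singletons plus paired block columns then gives exactly $N$ emitted vectors.

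With well-definedness established, I would check the three matrix properties. Unit-norm columns: each $f_n$ is either $e_m$ or a column of some $A(x_m)$, and in the latter case $\|f_n\|^{2}=\tfrac{x_m}{2}+\bigl(1-\tfrac{x_m}{2}\bigr)=1$ by the first defining property of $A(x)$. Orthogonal rows: rows $m$ and $m'$ with $|m-m'|\ge 2$ have disjoint support, while rows $m$ and $m+1$ overlap only in the two columns of the pass-$m$ block, if present, where they are orthogonal by the second defining property of $A(x)$; hence $AA^{*}$ is diagonal. Its $(m,m)$ entry is the sum of the carry-in $2-x_{m-1}$ from a pass-$(m-1)$ block (read as $0$ when absent), the $\lfloor\widetilde\lambda_m\rfloor$ unit contributions of the singletons, and the carry-out $x_m$ from a pass-$m$ block (again $0$ when absent); using $\widetilde\lambda_m=\lambda_m^{(0)}-(\text{carry-in})$ together with $\lfloor\widetilde\lambda_m\rfloor+x_m=\widetilde\lambda_m$, this collapses to $\lambda_m^{(0)}$. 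Thus $AA^{*}=\diag(\lambda_1,\dots,\lambda_M)$, and Theorem \ref{thm2.3} yields that the columns of $A$ form a frame for $\mathcal H_M$ whose frame operator has eigenvectors $\{e_m\}$ and eigenvalues $\{\lambda_m\}$. Finally, $2$-sparsity is immediate: every $f_n$ has at most two nonzero coordinates.

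The delicate part is the termination-and-count argument above: showing the residuals never turn negative and that the bottom row resolves to an integer so that no $2\times 2$ block escapes the matrix. This is exactly where the hypotheses are consumed — (1) forces every carry-in to be strictly smaller than its target $\lambda_m$, and (2) is the quantitative statement guaranteeing that the carries accumulated above the lowest non-integer eigenvalue close off to an integer rather than demanding one further, impossible, block. (In fact, since $N\in\mathbb N$ forces the non-integer eigenvalues to occur in groups whose fractional parts sum to an integer, (2) turns out to be a consequence of (1); it is, however, the clean invariant to propagate through the induction.) Everything else reduces to applying the single identity $A(x)A^{*}(x)=\diag(x,2-x)$ block by block.
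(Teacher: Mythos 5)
The paper states this theorem without proof, deferring entirely to \cite{CCHKP10}, so there is no in-text argument to compare against; judged on its own, your verification is correct. The loop invariants you propagate --- strict positivity of the residual $\widetilde\lambda_m$ (which is exactly where $\lambda_M\geq 2$ enters, since every carry $2-x_{m-1}$ is strictly less than $2$), the residual $x_m\in(0,1)$ whenever a block is formed, and the fractional-part identity forcing the final pass to consist only of singletons --- are precisely what is needed to show the algorithm is well defined, emits exactly $N$ unit-norm columns, and yields $AA^*=\diag(\lambda_1,\dots,\lambda_M)$, after which Theorem \ref{thm2.3} and the at-most-two-nonzero-coordinates observation finish the job. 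Your parenthetical claim that hypothesis (2) is redundant also checks out: if some $\lambda_{m_0}$ is non-integral then $M\geq 2$ and, since $\sum_m\lambda_m=N$ is an integer, at least one other eigenvalue is non-integral, so $\sum_{m\neq m_0}\lambda_m>2(M-1)\geq 2$, whence $\lambda_{m_0}<N-2$ and $\lfloor\lambda_{m_0}\rfloor\leq N-3$ automatically; the analogous condition is only genuinely restrictive in the fusion-frame version (SFFR), where the bound involves the number of subspaces $D$ rather than $N$.
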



\section{Generalized Spectral Tetris Frame Constructions}

By now, we have seen how Spectral Tetris has been used to construct UNTFs, originally with all eigenvalues greater than or equal to two and then adapted to construct UNTFs with all positive eigenvalues. Next, we saw how Spectral Tetris was then adapted to construct unit norm, non-tight frames with all eigenvalues greater than or equal to two. Logically, our next goal is to adapt Spectral Tetris to construct non-unit norm frames. In \cite{CHKWA}, they did just this and developed an easily implementable version of Spectral Tetris which constructs highly sparse frames with specified eigenvalues and specified vector norms. They also developed necessary and sufficient conditions on the eigenvalues and vector norms of a frame for when this construction will work. In doing this, they completely characterized the Spectral Tetris construction of a frame.

Recall our $2 \times 2$ building block $A\left(x\right)$, as defined at (\ref{eqn:a}). In this adaptation of Spectral Tetris, we will use a similar $2 \times 2$ building block and build our frame one or two vectors at a time. However, in order to allow for varied vector norms, we must modify property (1) of $A\left(x\right)$, which states that the columns of $A\left(x\right)$ square sum to 1, so that the columns of $A\left(x\right)$ can have varied norms; call these norms $a_1$ and $a_2$. Thus, the new $2 \times 2$ building blocks, which we denote $\hat{A}\left(x\right):=\hat{A}\left(x, a_1, a_2\right)$, will have the following properties:

\begin{enumerate}
\item The columns of $\hat{A}\left(x\right)$ have norms $a_1, a_2$ respectively.
\item The rows of $\hat{A}\left(x\right)$ are orthogonal. 
\item The square sum of the first row is $x$.
\end{enumerate}

These properties combined are equivalent to 
$$\hat{A}\left(x\right)\hat{A}^*\left(x\right)=\left[\begin{array}{cc}
x&0\\
0 & a_1^2+a_2^2-x
\end{array}\right].$$

A $2\times 2$ matrix which satisfies these conditions is:
$$\hat{A}\left(x\right):=\hat{A}\left(x,a_1,a_2\right)=\left[\begin{array}{cc}
\sqrt{\frac{x\left(a_1^2-y\right)}{x-y}}&\sqrt{\frac{x\left(x-a_1^2\right)}{x-y}}\\
\sqrt{\frac{y\left(x-a_1^2\right)}{x-y}}&-\sqrt{\frac{y\left(a_1^2-y\right)}{x-y}}\\
\end{array}\right],$$

where $y=a_1^2+a_2^2-x$. 

Similar to the original Spectral Tetris construction of a frame, this new adapted version of Spectral Tetris, called Prescribed Norms Spectral Tetris (PNSTC), builds frames one to two vectors at a time, where the $2\times 2$ matrix $\hat{A}\left(x\right)$ is used when two vectors are added. However, the existence of $\hat{A}\left(x\right)$ depends on $x, a_1$, and $a_2$. We will see in this new Spectral Tetris construction that the norms of the frame vectors correspond to the norms of the building block $\hat{A}\left(x\right)$, $a_1$ and $a_2$, and hence the existence of $\hat{A}\left(x\right)$ depends on $x$ and the norms of the frame vectors. In the following Lemma, we provide necessary and sufficient conditions for the existence of $\hat{A}\left(x\right)$.

\begin{lemma}\cite{CHKWA}\label{pnstclem}
A real matrix $\hat{A}\left(x\right):=\hat{A}\left( x, a_1, a_2 \right)$ satisfying $$\hat{A}\left(x\right)\hat{A}^*\left(x\right)=\left[\begin{array}{cc}
x&0\\
0 & a_1^2+a_2^2-x
\end{array}\right].$$

exists if and only if both of the following hold:
\begin{enumerate}
\item $a_1^2 + a_2^2 \geq x>0,$ and 
\item either $a_1^2, a_2^2 \geq x$ or $a_1^2, a_2^2 \leq x$.
\end{enumerate}
\end{lemma}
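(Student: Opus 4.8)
The plan is to argue both directions by passing to the four scalar entries. Denote the entries of the sought real $2\times 2$ matrix $\hat A(x)$ by $p,q$ in the first row and $r,s$ in the second. Properties (1)--(3) listed just before the lemma say precisely that $p^2+r^2=a_1^2$, $q^2+s^2=a_2^2$, $pr+qs=0$, and $p^2+q^2=x$; I will assume $x>0$ throughout (if $x=0$ the first row vanishes and the statement degenerates), and I record that the last two relations force $r^2+s^2=y$, where $y:=a_1^2+a_2^2-x$.

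For \emph{necessity}, suppose such a real matrix exists. Adding the two column-norm relations and using $p^2+q^2=x$ gives $a_1^2+a_2^2=x+(r^2+s^2)\ge x$, which is (1). For (2), the key observation is that it is equivalent to the single inequality $(a_1^2-x)(a_2^2-x)\ge 0$ (a product of two reals is nonnegative exactly when they do not lie on strictly opposite sides of $0$). Now $a_1^2-x=r^2-q^2$ and $a_2^2-x=s^2-p^2$, while squaring the orthogonality relation gives $p^2r^2=q^2s^2$; substituting $r^2=q^2s^2/p^2$ (valid when $p\ne 0$) yields the clean identity
\[ (a_1^2-x)(a_2^2-x)=(r^2-q^2)(s^2-p^2)=\frac{q^2}{p^2}\,(s^2-p^2)^2\ge 0. \]
The case $p=0$ is immediate: then $pr+qs=0$ forces $s=0$ (since $q=0$ would give $x=0$), hence $a_2^2=q^2=x$ and the product is $0$. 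This establishes (2).

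For \emph{sufficiency}, I will verify that the explicit matrix $\hat A(x)$ displayed just before the lemma works under hypotheses (1) and (2). The substantive step is that its four radicands $\tfrac{x(x-a_2^2)}{x-y}$, $\tfrac{x(x-a_1^2)}{x-y}$, $\tfrac{y(x-a_1^2)}{x-y}$, $\tfrac{y(x-a_2^2)}{x-y}$ (which are $p^2,q^2,r^2,s^2$ after substituting $y=a_1^2+a_2^2-x$) are all nonnegative. Hypothesis (1) gives $x>0$ and $y\ge 0$. Hypothesis (2) splits into two cases: if $a_1^2,a_2^2\ge x$, then $x-a_1^2\le 0$, $x-a_2^2\le 0$, and $x-y=(x-a_1^2)+(x-a_2^2)\le 0$, so each radicand is a quotient of a nonpositive number by a nonpositive number; if $a_1^2,a_2^2\le x$ all these signs reverse, giving nonnegative over nonnegative. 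Either way the entries are real. It then remains to check the four defining identities, each a one-line computation after substituting $y=a_1^2+a_2^2-x$: $p^2+q^2=x$, $p^2+r^2=a_1^2$, $q^2+s^2=a_2^2$, and $pr+qs=0$ — the last holding because $p^2r^2$ and $q^2s^2$ both equal $\tfrac{xy(x-a_1^2)(x-a_2^2)}{(x-y)^2}$, so with the sign choice $s\le 0$ the two cross terms cancel.

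The only point requiring real care is the vanishing of $x-y$, i.e.\ $2x=a_1^2+a_2^2$: under (2) this forces $a_1^2=a_2^2=x$, and there the explicit formula is replaced by the trivial block $\diag(\sqrt x,\sqrt x)$, which obviously satisfies (1)--(3). I expect the main obstacle to be not conceptual but the clean bookkeeping of these degenerate configurations ($x-y=0$, $p=0$, $y=0$); the engine of the whole argument is the identity $(a_1^2-x)(a_2^2-x)=\tfrac{q^2}{p^2}(s^2-p^2)^2$, which makes condition (2) fall out for free in the necessity direction.
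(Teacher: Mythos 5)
The paper states this lemma without proof (it is quoted from \cite{CHKWA}), so there is no in-text argument to compare against; your proposal is a correct and complete proof. Your sufficiency direction is precisely the verification that the explicit block $\hat{A}(x,a_1,a_2)$ displayed just before the lemma has real entries under (1) and (2) — the sign analysis of the four radicands and the observation that $x-y=(x-a_1^2)+(x-a_2^2)$ inherits the common sign are exactly right — together with the correct separate treatment of $x=y$, where (2) forces $a_1^2=a_2^2=x$ and $\diag(\sqrt{x},\sqrt{x})$ (or the paper's $\sqrt{x/2}\bigl(\begin{smallmatrix}1&1\\1&-1\end{smallmatrix}\bigr)$ block) serves. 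The necessity direction via the identity $(a_1^2-x)(a_2^2-x)=(r^2-q^2)(s^2-p^2)=\tfrac{q^2}{p^2}(s^2-p^2)^2$ is clean and correct, including the $p=0$ edge case. One interpretive point you handled well but should state explicitly: as literally written, the lemma's displayed equation $\hat{A}\hat{A}^*=\diag(x,\,a_1^2+a_2^2-x)$ does not by itself encode the column norms (e.g.\ $\diag(\sqrt{x},\sqrt{y})$ always satisfies it when $x>0$, $y\ge 0$, making condition (2) vacuous); the lemma only makes sense with the column-norm requirement $\|\mathrm{col}_i\|=a_i$ from the surrounding discussion folded in, which is the reading you adopt via the four scalar relations. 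Your remark that $x=0$ is a degenerate boundary excluded by convention is likewise consistent with the intended scope.
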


In order to construct frames with prescribed eigenvalues and prescribed norms, the original Spectral Tetris construction needs to be slightly modified. Not only does the $2 \times 2$ building block change to $\hat{A}\left(x\right)$, but in order to satisfy the conditions in Lemma \ref{pnstclem} there also needs to be some restrictions on the eigenvalue sequence and the vector norm sequence.

Recall that  a frame with vector norms $\{a_n\}_{n=1}^N$ and eigenvalues $\{\lambda_m\}_{m=1}^M$ exists if and only if after rearranging both sequences in non-increasing order and possibly adding zeros, $\{\lambda_m\}_{m=1}^M \succeq \{a_n^2\}_{n=1}^N$. This majorization condition is not sufficient in this scenario. In fact, a strengthening of majorization is required and is explicitly defined as follows:

\begin{definition} \label{def3.4} \cite{CHKWA}
We say two sequences $\{ a_n \}_{n=1}^N$ and $\{\lambda_m\}_{m=1}^M$ are \emph{Spectral Tetris ready} if $\sum_{n=1}^N a_n^2 = \sum_{m=1}^M \lambda_m$ and if there is a partition $0 \leq n_1 < \cdots < n_M=N$ of the set $\{0,1,\dots,N\}$ such that for all $k=1,2,\dots,M-1$:
\begin{enumerate}
\item $\sum_{n=1}^{n_k}a_n^2 \leq \sum_{m=1}^k\lambda_m < \sum_{n=1}^{n_k+1}a_n^2 \mbox{ and }$
\item $\mbox{ if } \sum_{n=1}^{n_k}a_n^2 < \sum_{m=1}^k\lambda_m, \mbox{ then } n_{k+1}-n_k \geq 2 \mbox{ and } a_{n_k+2}^2 \geq \sum_{m=1}^k \lambda_m-\sum_{n=1}^{n_k}a_n^2.$
\end{enumerate}
\end{definition}

Notice that there is no assumption on the ordering of the sequence of eigenvalues nor the sequence of vector norms in Definition \ref{def3.4}; hence we can permute the sequences, if necessary, to make them Spectral Tetris ready. It is important to note that some permutations of the sequences may be Spectral Tetris ready while other permutations may not. This is illustrated in the following example: 

\begin{example}
Given the eigenvalues $\{\lambda_m\}_{m=1}^3=\{8,6,4\}$ and the vector norms $\{a_n\}_{n=1}^4 =\{3,2,2,1\}.$ If we arrange the vectors norms as follows: $\{a_n\}_{n=1}^4 =\{2,1,3,2\}$, and take the partition $n_1=1, n_2=3$ and $n_3=4$, then the sequences $\{8,6,4\}$ and $\{2,1,3,2\}$ are Spectral Tetris ready. However, if we arrange the vector norms as follows: $\{a_n\}_{n=1}^4=\{2,2,3,1\}$ and leave the eigenvalues as $\{\lambda_m\}_{m=1}^3=\{8,6,4\}$, then no partition of the norms yields Spectral Tetris ready sequences. Also from this we see that the ordering of the sequences need not be monotone. 
\end{example}

Also, it may be the case that some sequences of eigenvalues and vector norms satisfy the majorization condition of Definition \ref{majorization} and hence there exists a corresponding frame; but no arrangement of the sequences is Spectral Tetris ready and as such PNSTC cannot construct such a frame. We illustrate this in the following example:

\begin{example}
We want to construct a 4-element tight frame in $\mathcal{H}_3$ with vector norms $\{a_n\}_{n=1}^4 =\{3,3,3,1\}.$ Hence our eigenvalue sequence is $\{\lambda_m\}_{m=1}^3=\{\frac{28}{3},\frac{28}{3},\frac{28}{3}\}$. Then $\{\lambda_m\}_{m=1}^3 \succeq \{a_n^2\}_{n=1}^4$ and hence such a frame exists. However, there is no arrangement of these eigenvalues and vector norms which is Spectral Tetris ready and hence PNSTC cannot construct such a frame.  
\end{example}

Due to this mild constraint on the sequences, PNSTC cannot construct construct all possible frames. In fact, the properties given in Definition \ref{def3.4} are exactly the necessary and sufficient conditions which allow PNSTC to construct a frame with prescribed norms and prescribed spectrum. This is stated explicitly in the following theorem.

\begin{theorem} \label{thm3.6}\cite{CHKWA}
Given $\{a_n\}_{n=1}^N \subseteq \left( 0, \infty \right)$ and $\{\lambda_m \}_{m=1}^M \subseteq \left(0,\infty\right)$, PNSTC can be used to construct
a frame $\{f_n\}_{n=1}^N$ for $\mathcal{H}_M$ such that $\|f_n\| = a_n$ for $n = 1, \dots,N$ and having eigenvalues $\{\lambda_m\}_{m=1}^M$ if and only if there exist a permutation which makes the sequences $\{a_n\}_{n=1}^N$ and $\{\lambda_m\}_{m=1}^M$ Spectral Tetris ready.
\end{theorem}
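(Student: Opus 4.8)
The plan is to prove both implications by unrolling the PNSTC algorithm directly. Recall that PNSTC behaves exactly like the SFR algorithm of Table~\ref{sfralg}, except that the unit columns are replaced by columns of prescribed norm $a_n$ and the block $A(x)$ is replaced by $\hat A(x,a_1,a_2)$: it fills the $M\times N$ synthesis matrix one row at a time, and while filling row $m$ it greedily inserts a $1\times1$ block $a_n e_m$ whenever the weight still owed to row $m$ is at least $a_n^2$, and otherwise inserts the $2\times2$ block $\hat A$ spread across rows $m$ and $m+1$, carrying the leftover weight into row $m+1$.

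For the ``only if'' direction, assume PNSTC run on some permutation of the two sequences (relabel so this is the given order) terminates and hence produces a frame $\{f_n\}$ with $\|f_n\|=a_n$ and spectrum $\{\lambda_m\}$. Let $n_k$ denote the number of columns whose support lies in rows $1,\dots,k$, with $n_0=0$ and $n_M=N$, and note $\sum_{n=1}^N a_n^2=\sum_{m=1}^M\lambda_m$ since the frame has the prescribed norms and eigenvalues. Because the algorithm fills row $k$ greedily and only advances once its target $\sum_{m=1}^k\lambda_m$ has been met, the total weight $\sum_{m=1}^k\lambda_m$ deposited in the first $k$ rows lies in $[\sum_{n=1}^{n_k}a_n^2,\sum_{n=1}^{n_k+1}a_n^2)$: the lower bound because columns $1,\dots,n_k$ have all their weight there, and the strict upper bound because column $n_k+1$ keeps positive weight for row $k+1$. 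That is condition (1) of Definition~\ref{def3.4}. When the lower bound is strict, column $n_k+1$ straddles rows $k$ and $k+1$ with overlap weight $x_k:=\sum_{m=1}^k\lambda_m-\sum_{n=1}^{n_k}a_n^2$, and PNSTC inserts $\hat A(x_k,a_{n_k+1},a_{n_k+2})$, forcing $n_{k+1}\ge n_k+2$; since this block exists, Lemma~\ref{pnstclem} together with $a_{n_k+1}^2>x_k$ forces $a_{n_k+2}^2\ge x_k$, which is condition (2). Hence this permutation is Spectral Tetris ready.

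For the ``if'' direction, relabel so the given order is Spectral Tetris ready with partition $0=n_0<n_1<\dots<n_M=N$, and run PNSTC. By induction on $k$ I would show: after row $k$ is processed the algorithm has not failed, columns $1,\dots,n_k$ are fixed with norms $a_1,\dots,a_{n_k}$, rows $1,\dots,k$ are pairwise orthogonal with the $m$-th summing to $\lambda_m$, and the weight that column $n_k+1$ still owes to rows $>k$ is $a_{n_k+1}^2-x_k$ with $x_k:=\sum_{m=1}^k\lambda_m-\sum_{n=1}^{n_k}a_n^2\ge 0$. In the step, if $x_k=0$ row $k+1$ starts fresh at column $n_k+1$; if $x_k>0$ the algorithm must build $\hat A(x_k,a_{n_k+1},a_{n_k+2})$, which is legitimate since $0<x_k<a_{n_k+1}^2$ (condition (1)) and $a_{n_k+2}^2\ge x_k$ (condition (2)), so both hypotheses of Lemma~\ref{pnstclem} are met. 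One then fills row $k+1$ with $1\times1$ blocks and checks, using condition (1) at level $k+1$ and $n_{k+1}\ge n_k+2$, that the running square sum of row $k+1$ never exceeds $\lambda_{k+1}$ before column $n_{k+1}$ and reaches it exactly there. Orthogonality is automatic: rows more than one index apart have disjoint column supports, and consecutive rows overlap only on the $2\times2$ block $\hat A$, whose rows are orthogonal by construction. Taking $k=M$ yields an $M\times N$ matrix with orthogonal rows, $m$-th row summing to $\lambda_m$, and $n$-th column of norm $a_n$; by Theorem~\ref{thm2.3} this is the synthesis matrix of a frame with the required norms and spectrum.

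The main obstacle is the bookkeeping in the ``if'' direction: one must track precisely which columns land in which rows, keep the straddling weights $x_k$ consistent across the induction, and verify that the ``Spectral Tetris ready'' inequalities are exactly — with no slack — what is needed both to license each call to $\hat A$ through Lemma~\ref{pnstclem} and to stop any row from being overfilled before its partition point. Once the invariant is stated correctly the argument is a routine, if lengthy, unrolling of the algorithm; the conceptual content lies entirely in recognizing that the straddling weight $x_k$ is simultaneously the quantity in Definition~\ref{def3.4}(2) and the parameter whose feasibility is governed by Lemma~\ref{pnstclem}.
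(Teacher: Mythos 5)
The paper itself offers no proof of Theorem \ref{thm3.6}; it is quoted from [CHKWA] without argument, so there is nothing in the text to compare against line by line. Your proof is the natural one and, as far as I can check it, correct: the ``only if'' direction correctly extracts the partition from the column supports of a successful run and uses Lemma \ref{pnstclem} (together with the fact that the $2\times 2$ branch is only entered when $a_{n_k+1}^2>x_k$) to recover condition (2) of Definition \ref{def3.4}, and the ``if'' direction's induction is licensed by exactly the inequalities you cite — in particular, condition (1) at level $k+1$ combined with $n_{k+1}\ge n_k+2$ is precisely what guarantees both that every intermediate singleton insertion in row $k+1$ is legal and that the carried-over weight $a_{n_k+1}^2+a_{n_k+2}^2-x_k$ does not exceed $\lambda_{k+1}$. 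Two small points to tidy up: the invariant as written attributes the leftover weight $a_{n_k+1}^2-x_k$ to column $n_k+1$ alone, whereas both columns $n_k+1$ and $n_k+2$ of the block $\hat A(x_k,a_{n_k+1},a_{n_k+2})$ straddle rows $k$ and $k+1$ and jointly owe $a_{n_k+1}^2+a_{n_k+2}^2-x_k$ to row $k+1$ (this also slightly affects how $n_k$ is counted in the ``only if'' direction, though not the inequalities); and in the ``only if'' direction one should note the degenerate possibility $n_{k+1}=n_k$ (a row that begins and immediately ends with a straddling block), which sits awkwardly with the strict inequalities $n_1<\cdots<n_M$ in Definition \ref{def3.4} as printed — an imprecision inherited from the source statement rather than a flaw in your argument.
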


The algorithm presented in Table 2, from \cite{CHKWA}, is an adaptation of Spectral Tetris which constructs a frame with prescribed vector norms and prescribed eigenvalues, as long as they are Spectral Tetris ready. Similar to other forms of Spectral Tetris, this adapted version of Spectral Tetris, called Prescribed Norms Spectral Tetris (PNSTC), constructs a sparse frame one to two vectors at a time via an easily implementable algorithm.

\begin{table}[ht]
\centering
\framebox{
\begin{minipage}[ht]{4.8in}
\vspace*{0.3cm}
{\sc \underline{PNSTC: Prescribed Norms Spectral Tetris Construction}}

\vspace*{0.4cm}

\noindent {\bf Parameters:}
\begin{itemize}
\item Dimension $M \in \mathbb{N}$.

\item Number of frame elements $N \in \mathbb{N}$.

\item Eigenvalues $\{\lambda_m\}_{m=1}^M \subseteq \left(0,\infty \right)$ and norms of the frame vectors $\{a_n\}_{n=1}^N \subseteq \left(0,\infty \right)$ such that $\{\lambda_m\}_{m=1}^M$ and $\{a_n^2\}_{n=1}^N$ are Spectral Tetris ready.
\end{itemize}

\noindent {\bf Algorithm:}
\begin{itemize}
\item Set $n=1$
\item For $m=1,\dots, M$ do 
		\begin{enumerate}
		\item Repeat
				\begin{enumerate}
				\item If $\lambda_m \geq a_n^2$ then 
						\begin{enumerate}
						\item $f_n:=a_ne_m$.
						\item $\lambda_m := \lambda_m-a_n^2$.
						\item $n:=n+1$.
						\end{enumerate}
				\item else
						\begin{enumerate}
						\item If $2\lambda_m=a_n^2+a_{n+1}^2$, then
								\begin{enumerate}
								\item $f_n := \sqrt{\frac{\lambda_m}{2}}\cdot \left(e_m +e_{m+1}\right)$.
								\item $f_{n+1} := \sqrt{\frac{\lambda_m}{2}}\cdot \left(e_m -e_{m+1}\right)$.
								\end{enumerate}
						\item else
								\begin{enumerate}
								\item $y:=a_n^2+a_{n+1}^2 - \lambda_m$.
								\item $f_n := \sqrt{\frac{\lambda_m\left(a_n^2-y\right)}{\lambda_m-y}}\cdot e_m + \sqrt{\frac{y\left(\lambda_m- a_n^2\right)}{\lambda_m-y}} \cdot e_{m+1}$.
								\item $f_{n+1} := \sqrt{\frac{\lambda_m\left(\lambda_m-a_n^2\right)}{\lambda_m-y}}\cdot e_m - \sqrt{\frac{y\left(a_n^2-y\right)}{\lambda_m-y}} \cdot e_{m+1}$.
								\end{enumerate}
						\item end.
						\item $\lambda_{m+1}:=\lambda_{m+1}-\left(a_n^2+a_{n+1}^2 - \lambda_m\right).$
						\item $\lambda_m :=0$.
						\item $n:=n+2$.
						\end{enumerate}
				\item{end}
				\end{enumerate}
		\item until $\lambda_m=0$.
		\end{enumerate}
\item end.
\end{itemize}

\noindent {\bf Output:}
\begin{itemize}
\item Frame $\{f_n\}_{n=1}^N \subseteq \mathcal{H}_M$ with eigenvalues $\{\lambda_m\}_{m=1}^M$ and norms of the frame vectors $\{a_n\}_{n=1}^N$.
\end{itemize}

\end{minipage}
}
\vspace*{0.2cm}
\caption{The PNSTC algorithm for constructing a frame with prescribed spectrum and prescribed vector norms.}
\label{pnstcalg}
\end{table}

The PNSTC algorithm generalized Spectral Tetris to allow for any prescribed vector norms and any prescribed eigenvalues, so long as these sequences are Spectral Tetris ready. This algorithm is easily executable and the frames it constructs are extremely sparse, making them implementable in numerous applications. Next we present an example to further illustrate how PNSTC works.  

\begin{example}
Let us construct an 8-element frame in $\mathcal{H}_5$ with:
\[ \mbox{vector norms } \{a_n\}_{n=1}^8=\{4,1,2,\sqrt{3},1,\sqrt{2},3,2\}\]
\[\mbox{ and eigenvalues } \{\lambda_m\}_{m=1}^5= \{18,6,2,10,4\}.\]

First note that for PNSTC to work, we no longer need the assumption that we have at least twice as many vectors as the dimension, as this example will illustrate. Also note that, after rearranging both sequences in non-increasing order, our sequence of eigenvalues $\{\lambda_m\}_{m=1}^5= \{18,10, 6,4, 2\}$ majorizes the sequence of our square norms $\{a_n^2\}_{n=1}^8=\{4^2,3^2,2^2,2^2,\sqrt{3}^2,\sqrt{2}^2,1^2,1^2\}$. Therefore, a frame with these properties does exist. However, satisfying majorization does not guarantee that PNSTC can construct such a frame. We need to further guarantee that our sequences are Spectral Tetris ready, as defined in Definition \ref{def3.4}. If we arrange our sequences as we originally had them, i.e., $\{a_n\}_{n=1}^8=\{4,1,2,\sqrt{3},1,\sqrt{2},3,2\}$, $\{\lambda_m\}_{m=1}^5= \{ 18,6,2,10,4\}$, and let $n_1=2, n_2=4, n_3=5, n_4=7$, and $n_5=8$ then it is a straightforward check to see that our sequences are Spectral Tetris ready. 

Next, we will construct our frame using the PNSTC algorithm:
\begin{itemize}
\item Let $n=1$.
\begin{enumerate}
\item Let $m=1$
		\begin{enumerate}
		\item Is $\lambda_1=18 \geq 16 = 4^2 = a_1^2?$ Yes! Then we have the following:
				\begin{enumerate}
				\item $f_1:=a_1e_1=4e_1$.
				\item $\lambda_1:=\lambda_1-a_1^2=18-4^2=2$.
				\item $n:=n+1=1+1=2$.
				\item end.
				\end{enumerate}
		\item Does $\lambda_1=0$? No, $\lambda_1=2$ now, so repeat process. And now $\lambda_1=2$ and $n=2$.
		\item Is $\lambda_1=2 \geq 1^2=a_2^2$? Yes! Then we have the following:
				\begin{enumerate}
				\item $f_2:=a_2e_1=1e_1$.
				\item $\lambda_1:=\lambda_1-a_1^2=2-1^2=1$.
				\item $n:=n+1=2+1=3$.
				\item end.
				\end{enumerate}
		\item Does $\lambda_1=0$? No, $\lambda_1=1$ now, so repeat process. And now $\lambda_1=1$ and $n=3$.
		\item Is $\lambda_1=1 \geq 2^2=a_3^2$? No!
		\item Does $2\lambda_1=2= 7=2^2+\sqrt{3}^2=a_3^2+a_4^2$? No!
		\item Then let $y:=a_3^2+a_4^2-\lambda_1=2^2+\sqrt{3}^2-1=6$ and we have the following:
				\begin{enumerate}
				\item $f_3:=\sqrt{\frac{1\left(2^2-6\right)}{1-6}}\cdot e_1 + \sqrt{\frac{6\left(1-2^2\right)}{1-6}}\cdot e_2= \sqrt{\frac{2}{5}}\cdot e_1 + \sqrt{\frac{18}{5}}\cdot e_2$.
				\item $f_4:=\sqrt{\frac{1\left(1-2^2\right)}{1-6}}\cdot e_1 - \sqrt{\frac{6\left(2^2-6\right)}{1-6}}\cdot e_2= \sqrt{\frac{3}{5}}\cdot e_1 - \sqrt{\frac{12}{5}}\cdot e_2$.
				\item end.
				\end{enumerate}
		\item We also have the following:
				\begin{enumerate}
				\item $\lambda_2:=\lambda_2-\left(a_3^2+a_4^2-\lambda_1\right)=6-\left(4+3-1\right)=0$.
				\item $\lambda_1 := 0$.
				\item $n:=n+2=3+2=5$.
				\item end.
				\end{enumerate}
		\item Now $\lambda_1=0$ and we end this loop.
		\end{enumerate}
\end{enumerate}
\item Next $m=2$. (We still have $n=5$).
\item But $\lambda_2=0$ and we are done with this loop.
\item Next $m=3$. (We still have $n=5$).
		\begin{enumerate}
		\item Is $\lambda_3=2 \geq 1 = 1^2 = a_5^2?$ Yes! Then we have the following:
				\begin{enumerate}
				\item $f_5:=a_5e_3=1e_3$.
				\item $\lambda_3:=\lambda_3-a_5^2=2-1^2=1$.
				\item $n:=n+1=5+1=6$.
				\item end.
				\end{enumerate}
		\item Does $\lambda_3=0$? No, $\lambda_3=1$ now, so repeat process. And now $\lambda_3=1$ and $n=6$.
		\item Is $\lambda_3=1 \geq 2=a_6^2$? No!
		\item Does $2\lambda_3=2= 11=\sqrt{2}^2+3^2=a_6^2+a_7^2$? No!
		\item Then let $y:=a_6^2+a_7^2-\lambda_3=\sqrt{2}^2+3^2-1=10$ and we have the following:
				\begin{enumerate}
				\item $f_6:=\sqrt{\frac{1\left(2-10\right)}{1-10}}\cdot e_3 + \sqrt{\frac{10\left(1-2\right)}{1-10}}\cdot e_4= \sqrt{\frac{8}{9}}\cdot e_3 + \sqrt{\frac{10}{9}} \cdot e_4$.
				\item $f_7:=\sqrt{\frac{1\left(1-2\right)}{1-10}}\cdot e_3 - \sqrt{\frac{10\left(2-10\right)}{1-10}}\cdot e_4= \sqrt{\frac{1}{9}}\cdot e_3 - \sqrt{\frac{80}{9}}\cdot e_4$.
				\item end.
				\end{enumerate}
		\item We also have the following:
				\begin{enumerate}
				\item $\lambda_4:=\lambda_4-\left(a_6^2+a_7^2-\lambda_3\right)=10-\left(2+9-1\right)=0$.
				\item $\lambda_3 := 0$.
				\item $n:=n+2=6+2=8$.
				\item end.
				\end{enumerate}
		\item Now $\lambda_3=0$ and we end this loop.
		\end{enumerate}
\item Next $m=4$. (We still have $n=8$).
\item But $\lambda_4=0$ and we are done with this loop.
\item Next $m=5$. (We still have $n=8$).
		\begin{enumerate}
		\item Is $\lambda_5=4 \geq 4 = 2^2 = a_8^2?$ Yes! Then we have the following:
				\begin{enumerate}
				\item $f_8:=a_8e_5=2e_5$.
				\item $\lambda_5:=\lambda_5-a_8^2=4-4=0$.
				\item $n:=n+1=8+1=9$.
				\item end.
				\end{enumerate}
		\item Now $\lambda_5=0$ and we end this loop.
		\end{enumerate}
\item end.
\end{itemize}

{\bf Output:} PNSTC has just created an 8-element frame $\{ f_n\}_{n=1}^8$ of $\mathcal{H}_5$ with norms $\{a_n\}_{n=1}^8=\{4,1,2,\sqrt{3},1,\sqrt{2},3,2\}$ and eigenvalues $\{\lambda_m\}_{m=1}^5= \{ 18,6,2,10,4\}$. This frame is represented in the following matrix:

		$$\left[\begin{array}{cccccccc}
		4&1&\sqrt{\frac{2}{5}}&\sqrt{\frac{3}{5}}&0&0&0&0\\
		0&0&\sqrt{\frac{18}{5}}&-\sqrt{\frac{12}{5}}&0&0&0&0\\
		0&0&0&0&1&\sqrt{\frac{8}{9}}&\sqrt{\frac{1}{9}}&0\\
		0&0&0&0&0&\sqrt{\frac{10}{9}}&-\sqrt{\frac{80}{9}}&0\\
		0&0&0&0&0&0&0&2
		\end{array}\right].$$
		\end{example}

We have seen that PNSTC builds a frame one or two vectors at a time and utilizes a $2 \times 2$ building block $\hat{A}\left(x\right)$, which is a similar process to the original Spectral Tetris construction. However, before such a frame can be constructed, the eigenvalue sequence and vector norm sequence need to be Spectral Tetris ready, if possible. Finding a possible arrangement of these sequences can be a time-consuming and tedious task. To alleviate this long task, in \cite{CHKWA} they developed an easily-verified {\it sufficient} condition on the prescribed norms and eigenvalues under which PNSTC can be implemented. We state this condition in Proposition \ref{prop345}; but note that this condition is only a sufficient condition, whereas Spectral Tetris ready is a necessary and sufficient condition to apply PNSTC.

\begin{proposition}\label{prop345}\cite{CHKWA}
Let $\{a_n \}_{n=1}^N \subseteq \left( 0 ,\infty \right)$ and $\{ \lambda_m \}_{m=1}^M \subseteq \left( 0, \infty \right)$ be non-decreasing sequences such that $\sum_{n=1}^Na_n^2=\sum_{m=1}^M\lambda_m$ and \[a_{N-2L}^2+a_{N-2L-1}^2 \leq \lambda_{M-L}\] for $L=0,1,\dots,M-1$. Then  $\{ a_n\}_{n=1}^N$ and $\{ \lambda_m\}_{m=1}^M $ are Spectral Tetris ready, hence by Theorem \ref{thm3.6}, PNSTC can construct a frame $\{f_n\}_{n=1}^N$ for $\mathcal{H}_M$ with $\|f_n\|=a_n$ for $n=1,\dots,N$ and with eigenvalues $\{\lambda_m\}_{m=1}^M$. In particular, PNSTC can be performed if $a_N^2+a_{N-1}^2\leq \lambda_1$.
\end{proposition}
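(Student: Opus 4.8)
The plan is to build the partition demanded by the definition of \emph{Spectral Tetris ready} (Definition~\ref{def3.4}) explicitly, by a greedy rule, and then check its two defining properties; the claim about PNSTC then follows from Theorem~\ref{thm3.6}, and the ``in particular'' clause is immediate because $\lambda_1\le\lambda_{M-L}$ and $a_{N-2L}^2+a_{N-2L-1}^2\le a_N^2+a_{N-1}^2$ by monotonicity of the two sequences. Throughout put $T_j:=\sum_{n=1}^j a_n^2$ and $S_k:=\sum_{m=1}^k\lambda_m$, so that $T_0=S_0=0$ and, by hypothesis, $T_N=S_M$. I would define $n_k$ to be the largest index $j\in\{0,1,\dots,N\}$ with $T_j\le S_k$. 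Then $n_0=0$ because $a_1>0$; $n_M=N$ because $T_N\le S_M$; and $n_k<N$ for $1\le k\le M-1$ because $S_k<S_M=T_N$. Property~(1) of Definition~\ref{def3.4}, namely $T_{n_k}\le S_k<T_{n_k+1}$, then holds for every such $k$ straight from the maximality defining $n_k$.

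The conceptual heart of the argument is the ``room estimate''
\[ n_k\ \le\ N-2(M-k)\qquad(1\le k\le M), \]
which says that once the first $S_k$ units of spectral weight have been laid down greedily, at least $2(M-k)$ of the squared norms remain unused --- exactly two for each of the remaining rows. I would prove it by contradiction: summing the hypothesized inequalities $a_{N-2L}^2+a_{N-2L-1}^2\le\lambda_{M-L}$ over $L=0,1,\dots,M-k-1$ telescopes the left-hand sides to $\sum_{n=N-2(M-k)+1}^{N}a_n^2$ and the right-hand sides to $\sum_{m=k+1}^{M}\lambda_m$, while $T_{n_k}\le S_k$ gives $\sum_{n=n_k+1}^{N}a_n^2=T_N-T_{n_k}\ge S_M-S_k=\sum_{m=k+1}^{M}\lambda_m$. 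Hence $\sum_{n=n_k+1}^{N}a_n^2\ge\sum_{n=N-2(M-k)+1}^{N}a_n^2$; if $n_k$ were at least $N-2(M-k)+1$ the first sum would run over a strictly smaller set of indices, and since every $a_n^2>0$ it would be strictly smaller, a contradiction.

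Granting the room estimate, property~(2) follows cleanly. Fix $k\in\{1,\dots,M-1\}$ with $T_{n_k}<S_k$. The estimate gives $n_k\le N-2(M-k)\le N-2$, so columns $n_k+1$ and $n_k+2$ exist. For part~(b) I would note $S_k-T_{n_k}<T_{n_k+1}-T_{n_k}=a_{n_k+1}^2\le a_{n_k+2}^2$ by monotonicity of $\{a_n\}$, which is exactly the required bound $a_{n_k+2}^2\ge\sum_{m=1}^k\lambda_m-\sum_{n=1}^{n_k}a_n^2$. For part~(a), the hypothesis with $L=M-k-1$ reads $a_{N-2(M-k)+1}^2+a_{N-2(M-k)+2}^2\le\lambda_{k+1}$; since $n_k+1\le N-2(M-k)+1$ and $n_k+2\le N-2(M-k)+2$, monotonicity of $\{a_n\}$ yields $a_{n_k+1}^2+a_{n_k+2}^2\le\lambda_{k+1}$, whence $T_{n_k+2}-T_{n_k}=a_{n_k+1}^2+a_{n_k+2}^2\le\lambda_{k+1}<\lambda_{k+1}+(S_k-T_{n_k})=S_{k+1}-T_{n_k}$, so $T_{n_k+2}<S_{k+1}$ and therefore $n_{k+1}\ge n_k+2$. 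The weaker bound $a_{n_k+1}^2\le\lambda_{k+1}$, obtained the same way, also shows $n_{k+1}>n_k$ in the remaining case $T_{n_k}=S_k$, so the $n_k$ are strictly increasing and genuinely form a partition of $\{0,\dots,N\}$.

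The step I expect to require the most care is not this chain of inequalities but a preliminary reduction handling the ends of the sequences. When $N$ is close to $2M$ some subscripts $N-2L-1$ in the hypothesis are nonpositive, so one must first adopt the convention $a_j^2:=0$ for $j\le0$ (equivalently, pad $\{a_n\}$ with leading zeros) and then observe that this convention, together with $\lambda_m>0$ and $\sum a_n^2=\sum\lambda_m$, already forces $N\ge 2M-1$: summing \emph{all} the hypothesized inequalities gives $\sum_{n=N-2M+1}^{N}a_n^2\le\sum_{n=1}^{N}a_n^2$, which for $N\le 2M-1$ is an equality and hence forces every individual inequality to be an equality, contradicting $\lambda_m>0$ unless $N=2M-1$. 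This reduction, which should be carried out before anything else, makes every subscript appearing in the room estimate and in property~(2) legitimate; the lone borderline value $N=2M-1$, where the hypothesis collapses to equalities and the greedy rule produces the forced exact tiling $n_k=2k-1$ with property~(2) vacuous, is then disposed of directly.
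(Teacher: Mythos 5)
The paper states this proposition without proof, citing \cite{CHKWA}, so there is no in-paper argument to compare against; judged on its own, your proof is correct and complete. The greedy choice $n_k:=\max\{j: T_j\le S_k\}$ makes property (1) of Definition \ref{def3.4} automatic, and your ``room estimate'' $n_k\le N-2(M-k)$ --- obtained by telescoping the hypotheses over $L=0,\dots,M-k-1$ and comparing with $T_N-T_{n_k}\ge S_M-S_k$ --- is exactly the right lemma: it simultaneously legitimizes the indices $n_k+1,n_k+2$, delivers $n_{k+1}-n_k\ge 2$ via $a_{n_k+1}^2+a_{n_k+2}^2\le\lambda_{k+1}$, and (with monotonicity of $\{a_n\}$ and the maximality bound $S_k<T_{n_k+1}$) gives $a_{n_k+2}^2\ge S_k-T_{n_k}$. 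Your preliminary reduction showing the hypotheses force $N\ge 2M-1$ under the zero-padding convention is also consistent with the paper's own remark that they force $N\ge 2M$ when all subscripts are read as genuine indices, and the boundary case is disposed of correctly.
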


\begin{remark} In Proposition \ref{prop345}, the property $a_{N-2L}^2+a_{N-2L-1}^2 \leq \lambda_{M-L}$ together with $\sum_{n=1}^Na_n^2=\sum_{m=1}^M\lambda_m$ imply that $N \geq 2M$. Thus, this sufficient condition also requires redundancy of at least 2; whereas, the Spectral Tetris ready condition only required $N\geq M$.
\end{remark}

While PNSTC constructs frames with prescribed vector norms and prescribed spectrum, we can specialize PNSTC to construct tight frames and/or unit norm frames with the hopes that the conditions on the norm and eigenvalue sequences become easier to check. Moreover, if we wish to use PNSTC to construct a tight frame with prescribed norms, then the vector norms must satisfy the following easily verified sufficient condition. That is, in $M$ dimensions, with prescribed vector norms $\{a_n\}_{n=1}^N$ and tight frame bound $\lambda$, it suffices to check if $\lambda$ is greater than or equal to the sum of squares of the two largest vector norm values in $\{a_n\}_{n=1}^N$. Explicitly stated: 

\begin{theorem}\label{thm5.2}\cite{CHKWA}
Let $a_1 \geq a_2 \geq \cdots \geq a_N >0$ and $\lambda =\frac{1}{M}\sum_{n=1}^Na_n^2$. If $a_1^2+a_2^2 \leq \lambda$, then PNSTC constructs a $\lambda$-tight frame $\{f_n\}_{n=1}^N$ for $\mathcal{H}_M$ satisfying $\|f_n\|=a_n$ for all $n=1,2,\dots,N$.
\end{theorem}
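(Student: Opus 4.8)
The plan is to obtain Theorem \ref{thm5.2} as a direct corollary of Proposition \ref{prop345}, specialized to the constant eigenvalue sequence of a tight frame. A $\lambda$-tight frame for $\mathcal{H}_M$ is precisely a frame whose frame operator has the single eigenvalue $\lambda$ with multiplicity $M$, so the prescribed spectrum is $\lambda_1 = \cdots = \lambda_M = \lambda$; this sequence is trivially non-decreasing, and $\sum_{m=1}^M \lambda_m = M\lambda = \sum_{n=1}^N a_n^2$ by the definition of $\lambda$, so the normalization hypothesis of Proposition \ref{prop345} is satisfied. Since Proposition \ref{prop345} is phrased for non-decreasing norm sequences whereas we are given $a_1 \ge \cdots \ge a_N$, I would first pass to the reversed sequence $b_n := a_{N+1-n}$, which is non-decreasing, has the same multiset of values as $\{a_n\}_{n=1}^N$, and whose two largest entries are $b_N = a_1$ and $b_{N-1} = a_2$.

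Now I would invoke the ``in particular'' clause of Proposition \ref{prop345}, which asserts that PNSTC can be performed provided $b_N^2 + b_{N-1}^2 \le \lambda_1$. In our situation $\lambda_1 = \lambda$ and $b_N^2 + b_{N-1}^2 = a_1^2 + a_2^2$, so this is exactly the hypothesis $a_1^2 + a_2^2 \le \lambda$ of the theorem. The implicit requirement $N \ge 2M$ that makes the index range in Proposition \ref{prop345} meaningful is recorded in the remark following it; alternatively it can be seen directly by grouping $\sum_{n=1}^N a_n^2 = M\lambda$ into consecutive pairs, each of value at most $a_1^2 + a_2^2 \le \lambda$, and ruling out the borderline case $N = 2M-1$ using $a_N > 0$ together with $a_1^2 + a_2^2 \ge 2 a_N^2$. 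Thus all hypotheses of Proposition \ref{prop345} hold for the pair $(\{b_n\}_{n=1}^N, \{\lambda_m\}_{m=1}^M)$.

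Therefore Proposition \ref{prop345} (hence Theorem \ref{thm3.6}) yields a frame $\{f_n\}_{n=1}^N$ for $\mathcal{H}_M$, constructed by PNSTC, with $\|f_n\| = b_n$ for all $n$ and with frame-operator eigenvalues all equal to $\lambda$, that is, a $\lambda$-tight frame. Relabeling the frame vectors so that their norms are listed in the original order $a_1 \ge a_2 \ge \cdots \ge a_N$ (equivalently, running PNSTC on the reversed norm list and reversing its output) gives the frame asserted in Theorem \ref{thm5.2}.

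I do not anticipate any real difficulty: the statement is essentially a re-packaging of Proposition \ref{prop345}, and the only points that require attention are reconciling the non-increasing ordering in Theorem \ref{thm5.2} with the non-decreasing ordering assumed in Proposition \ref{prop345}, and noting the implicit constraint $N \ge 2M$. If one prefers not to appeal to the ``in particular'' clause, the same conclusion follows from the full chain in Proposition \ref{prop345}, since monotonicity of $\{b_n\}_{n=1}^N$ gives $b_{N-2L}^2 + b_{N-2L-1}^2 \le b_N^2 + b_{N-1}^2 = a_1^2 + a_2^2 \le \lambda = \lambda_{M-L}$ for every $L \in \{0,1,\dots,M-1\}$; this elementary inequality, together with the $N \ge 2M$ count, is the only ``work'' in the argument.
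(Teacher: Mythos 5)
Your derivation is correct. The survey states Theorem \ref{thm5.2} without proof (citing \cite{CHKWA}), but it positions the result exactly as you treat it: the specialization of Proposition \ref{prop345} to the constant spectrum $\lambda_1=\cdots=\lambda_M=\lambda$, where the ``in particular'' clause reduces to $a_1^2+a_2^2\le\lambda$. Your handling of the two points that could trip this up --- reversing the non-increasing norm list to match the non-decreasing convention of Proposition \ref{prop345}, and checking that $a_1^2+a_2^2\le\lambda$ together with $M\lambda=\sum_{n=1}^N a_n^2$ forces $N\ge 2M$ so that the index range $L=0,\dots,M-1$ is meaningful --- is sound.
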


\begin{remark} The condition in Theorem \ref{thm5.2} is an analog to the condition of the tight frame bound being at least 2 in the original Spectral Tetris construction, which ensured that Spectral Tetris works to produce unit norm, tight frames.
\end{remark}

Since the condition in Theorem \ref{thm5.2} is only a sufficient condition then we will see in the following example that there exist tight frames which fail the condition in Theorem \ref{thm5.2} but satisfy the Spectral Tetris ready condition. 

\begin{example}
Suppose we want to use PNSTC to construct a $6$-element tight frame in $\mathcal{H}_3$ with vector norms $\left(\sqrt{6},\sqrt{5},\sqrt{5},1,1,1\right)$. Then the tight frame bound will be $\lambda=\frac{19}{3}$. If we try to use Theorem \ref{thm5.2}, we see that $a_1^2+a_2^2=6+5=11 \not\leq \frac{19}{3}$ and so Theorem \ref{thm5.2} does not apply. However, if we rearrange the norms to $\left(\sqrt{6},1,\sqrt{5},1,1,\sqrt{5}\right)$ and take the partition $n_1=1,n_2=3$ and $n_3=6$ then these sequences are Spectral Tetris ready and hence PNSTC can construct such a frame. Therefore, although Theorem \ref{thm5.2} provides an easily-verifiable check on the vector norms, it does not provide a necessary condition for PNSTC to work.
\end{example}

However, by reformulating Definition \ref{def3.4} and Theorem \ref{thm3.6} to the case of tight frames with prescribed spectrum, we can get a necessary and sufficient condition for a sequence of norms to yield a tight frame via PNSTC.

\begin{corollary}\cite{CHKWA}\label{abc123}
A tight frame for $\mathcal{H}_M$ with prescribed norms $\{a_n\}_{n=1}^N$ and having eigenvalue $\lambda = \frac{1}{M}\sum_{n=1}^Na_n^2$ can be constructed via PNSTC if and only if there exists an ordering of $\{a_n^2\}_{n=1}^N$ for which there is a partition $0 \leq n_1 < \dots < n_M=N$ of $\{0,1,\dots,N\}$ such that for all $k=1,2,\dots, M-1$: 
\begin{enumerate}
\item $\sum_{n=1}^{n_k}a_n^2 \leq k\lambda < \sum_{n=1}^{n_k+1}a_n^2 \mbox{ for all } k=1,\dots, M-1, \mbox{ and }$
\item $\mbox{if } \sum_{n=1}^{n_k}a_n^2 < k\lambda, \mbox{ then } n_{k+1}-n_k \geq  \mbox{ and } a_{n_k+2}^2 \geq k\lambda - \sum_{n=1}^{n_k}a_n^2.$
\end{enumerate}
\end{corollary}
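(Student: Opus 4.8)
The plan is to obtain Corollary \ref{abc123} as a direct specialization of Theorem \ref{thm3.6} to a constant eigenvalue sequence. First I would record the elementary fact that a frame $\{f_n\}_{n=1}^N$ for $\mathcal{H}_M$ is tight with tight frame bound $\lambda$ if and only if its frame operator satisfies $S = \lambda I$, i.e.\ its spectrum is $\lambda_m = \lambda$ for every $m = 1,\dots,M$; and that once the vector norms $\|f_n\| = a_n$ are prescribed, the trace identity $\sum_{m=1}^M \lambda_m = \sum_{n=1}^N \|f_n\|^2$ forces $\lambda = \frac{1}{M}\sum_{n=1}^N a_n^2$. Hence ``constructing via PNSTC a tight frame for $\mathcal{H}_M$ with norms $\{a_n\}_{n=1}^N$ and eigenvalue $\lambda$'' is literally the same task as ``constructing via PNSTC a frame with norms $\{a_n\}_{n=1}^N$ and eigenvalue sequence $\{\lambda_m\}_{m=1}^M$ where $\lambda_m = \lambda$ for all $m$.''

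Next I would invoke Theorem \ref{thm3.6} for this pair of sequences: PNSTC produces such a frame if and only if some permutation of $\{a_n\}_{n=1}^N$ together with some permutation of $\{\lambda_m\}_{m=1}^M$ is Spectral Tetris ready in the sense of Definition \ref{def3.4}. Since $\lambda_m = \lambda$ for all $m$, every permutation of the eigenvalue sequence leaves it unchanged, so the only genuine freedom is the reordering of the squared-norm sequence $\{a_n^2\}_{n=1}^N$ — which is exactly the phrase ``there exists an ordering of $\{a_n^2\}_{n=1}^N$'' appearing in the corollary. I would then unwind Definition \ref{def3.4} for the constant eigenvalue sequence: the requirement $\sum_{n=1}^N a_n^2 = \sum_{m=1}^M \lambda_m$ holds automatically because $\sum_{m=1}^M \lambda_m = M\lambda = \sum_{n=1}^N a_n^2$ by the definition of $\lambda$, and the partial sums simplify to $\sum_{m=1}^k \lambda_m = k\lambda$. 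Substituting $k\lambda$ for $\sum_{m=1}^k \lambda_m$ in conditions (1) and (2) of Definition \ref{def3.4} reproduces verbatim conditions (1) and (2) of the corollary, including $n_{k+1} - n_k \ge 2$ and $a_{n_k+2}^2 \ge k\lambda - \sum_{n=1}^{n_k} a_n^2$ in the strict-inequality case. Chaining the three reformulations gives the claimed equivalence.

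The proof is therefore essentially bookkeeping rather than a result with a hard technical core; the one point that warrants a line of care is verifying that the permutation freedom granted by Theorem \ref{thm3.6} collapses precisely to a reordering of the norms (because the eigenvalue list is constant) and that the global equality $\sum_n a_n^2 = \sum_m \lambda_m$ is automatically in force once $\lambda$ is taken to be the average of the squared norms. Everything else is a substitution of $k\lambda$ into the already-established characterization, so no separate induction or construction argument is needed beyond what Theorem \ref{thm3.6} already supplies.
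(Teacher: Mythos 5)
Your proposal is correct and is exactly the route the paper takes: the paper itself presents Corollary \ref{abc123} as an immediate reformulation of Definition \ref{def3.4} and Theorem \ref{thm3.6} for a constant eigenvalue sequence $\lambda_m=\lambda$, where the trace identity forces $\lambda=\frac{1}{M}\sum_{n=1}^N a_n^2$, the permutation freedom collapses to reordering the norms, and $\sum_{m=1}^k\lambda_m$ becomes $k\lambda$. Nothing is missing; the specialization is exactly the bookkeeping you describe.
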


Although the conditions in Corollary \ref{abc123} require more work to check than the condition in Theorem \ref{thm5.2}, they are necessary and sufficient conditions and hence apply to a larger class of frames.

Another special case of PNSTC is the case of unit norm but not necessarily tight frames. Recall, in Section \ref{sfrsec} we saw a  sufficient condition for SFR to construct unit norm frames. This construction required the eigenvalues of a frame to be greater than or equal to two. In \cite{CHKWA}, the authors found necessary and sufficient conditions for SFR to construct unit norm frames with positive spectrum by reformulating Definition \ref{def3.4} and Theorem \ref{thm3.6}. 

\begin{corollary}\label{cor6.1}\cite{CHKWA}
Let $\sum_{m=1}^M\lambda_m=N$ where $N\in \mathbb{N}$ and $N\geq M$. Then SFR can be used to produce a unit norm frame for $\mathcal{H}_M$ with eigenvalues $\{\lambda_m\}_{m=1}^M \subseteq \left(0,\infty\right)$ if and only if there is some permutation of $\{\lambda_m\}_{m=1}^M$ such that there exists a partition $0\leq n_1 < \dots < n_M=N$ of $\{0,\dots,N\}$, such that for each $k=1,\dots,M-1$ we have
\begin{enumerate}
\item $n_k \leq \sum_{m=1}^k\lambda_m <n_k+1$ and
\item if $n_k < \sum_{m=1}^k\lambda_m$, then $n_{k+1}-n_k \geq 2$.
\end{enumerate}
\end{corollary}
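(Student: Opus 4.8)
The plan is to obtain this as a direct specialization of Theorem \ref{thm3.6} to the constant norm sequence $a_1 = \cdots = a_N = 1$. The first step is to note that SFR (Table \ref{sfralg}) is nothing but PNSTC (Table \ref{pnstcalg}) run with all prescribed vector norms equal to $1$: substituting $a_n = a_{n+1} = 1$ into the branch tests makes ``$\lambda_m \ge a_n^2$'' become ``$\lambda_m \ge 1$'' and the degenerate case ``$2\lambda_m = a_n^2 + a_{n+1}^2$'' become ``$\lambda_m = 1$'', while with $y = a_n^2 + a_{n+1}^2 - \lambda_m = 2 - \lambda_m$ the two PNSTC update vectors simplify to $\sqrt{\lambda_m/2}\,e_m \pm \sqrt{1 - \lambda_m/2}\,e_{m+1}$ and the recursion $\lambda_{m+1} := \lambda_{m+1} - (2 - \lambda_m)$ is exactly the SFR step; this is a short computation, the only slightly delicate point being the cancellation of the common factor $\lambda_m - 1$ in the numerator and denominator of the $\hat A(x)$ entries. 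Consequently, ``SFR produces a unit norm frame for $\mathcal{H}_M$ with eigenvalues $\{\lambda_m\}_{m=1}^M$'' is equivalent to ``PNSTC produces a frame with $\|f_n\| = 1$ for all $n$ and eigenvalues $\{\lambda_m\}_{m=1}^M$'', which by Theorem \ref{thm3.6} happens if and only if some permutation makes the sequences $\{1\}_{n=1}^N$ and $\{\lambda_m\}_{m=1}^M$ Spectral Tetris ready.

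The second step is to rewrite the Spectral Tetris ready condition of Definition \ref{def3.4} under $a_n^2 \equiv 1$. Since all norms are equal, a permutation of $\{a_n\}_{n=1}^N$ does nothing, so only a permutation of $\{\lambda_m\}_{m=1}^M$ is relevant, as in the statement; the equal-sum requirement $\sum a_n^2 = \sum \lambda_m$ is the hypothesis $\sum_{m=1}^M \lambda_m = N$; and the partial sums become $\sum_{n=1}^{n_k} a_n^2 = n_k$ and $\sum_{n=1}^{n_k+1} a_n^2 = n_k + 1$. Thus condition (1) of Definition \ref{def3.4} turns into $n_k \le \sum_{m=1}^k \lambda_m < n_k + 1$, i.e.\ condition (1) of the corollary. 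In condition (2), the hypothesis becomes $n_k < \sum_{m=1}^k \lambda_m$, and the conclusion asks for $n_{k+1} - n_k \ge 2$ together with $a_{n_k+2}^2 \ge \sum_{m=1}^k \lambda_m - n_k$; but condition (1) already forces $\sum_{m=1}^k \lambda_m - n_k < 1 = a_{n_k+2}^2$, so that last inequality is automatic and condition (2) reduces to $n_{k+1} - n_k \ge 2$, i.e.\ condition (2) of the corollary. One checks en route that the indices stay in range: $n_k \le N - 1$ for $k < M$ since $n_k < n_M = N$, and $n_k + 2 \le n_{k+1} \le N$ whenever $n_{k+1} - n_k \ge 2$.

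Combining the two steps, the sequences $\{1\}_{n=1}^N$ and $\{\lambda_m\}_{m=1}^M$ are Spectral Tetris ready for some ordering of $\{\lambda_m\}_{m=1}^M$ precisely when conditions (1) and (2) of the corollary hold for some ordering, and by Theorem \ref{thm3.6} this is equivalent to SFR constructing the desired unit norm frame. I expect no real obstacle here — the content is entirely contained in Theorem \ref{thm3.6}, and the work is the bookkeeping reduction above; the one thing worth double-checking is that the PNSTC formulas genuinely collapse to the SFR formulas when $a_n = a_{n+1} = 1$, since that is the step that legitimizes treating SFR as a special case of PNSTC rather than as a separate algorithm.
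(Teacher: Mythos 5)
Your proposal is correct and follows exactly the route the paper indicates: the corollary is obtained "by reformulating Definition \ref{def3.4} and Theorem \ref{thm3.6}," i.e.\ by specializing PNSTC to constant unit norms and simplifying the Spectral Tetris ready conditions, which is precisely your two-step reduction (and your cancellation of the factor $\lambda_m-1$ in $\hat A(x)$, together with the observation that the second inequality in Definition \ref{def3.4}(2) becomes automatic, checks out).
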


The characterization in Corollary \ref{cor6.1} provides a strict limitation on the location of eigenvalues that can be strictly less than one, as we will see in the following Corollary. 

\begin{corollary}\cite{CHKWA}
If SFR can be used to produce a unit norm frame for $\mathcal{H}_M$ with eigenvalues $\left(\lambda_m\right)_{m=1}^M$, then $\lambda_k < 1$ is only possible if $k=1$ or if $n_{k-1}=\sum_{m=1}^{k-1}\lambda_m$.
\end{corollary}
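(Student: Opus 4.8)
The plan is to derive the claim directly from the characterization in Corollary \ref{cor6.1}, working with whichever permutation of $\{\lambda_m\}_{m=1}^M$ and partition $0 \le n_1 < \cdots < n_M = N$ witness that SFR succeeds. We are given that $\lambda_k < 1$ for some $k$, and we want to show that either $k = 1$ or else $n_{k-1} = \sum_{m=1}^{k-1}\lambda_m$. So assume $k \ge 2$; the goal becomes to rule out the alternative $n_{k-1} < \sum_{m=1}^{k-1}\lambda_m$.

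The main step is to see that the two conditions of Corollary \ref{cor6.1}, applied at the two consecutive indices $k-1$ and $k$, are incompatible with $\lambda_k < 1$ once $n_{k-1} < \sum_{m=1}^{k-1}\lambda_m$. First, condition (2) at index $k-1$ says that the strict inequality $n_{k-1} < \sum_{m=1}^{k-1}\lambda_m$ forces $n_k - n_{k-1} \ge 2$, so in particular $n_k \ge n_{k-1} + 2$. On the other hand, condition (1) at index $k-1$ gives $\sum_{m=1}^{k-1}\lambda_m < n_{k-1} + 1$, and condition (1) at index $k$ gives $n_k \le \sum_{m=1}^{k}\lambda_m = \sum_{m=1}^{k-1}\lambda_m + \lambda_k$. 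Chaining these: $n_k \le \sum_{m=1}^{k-1}\lambda_m + \lambda_k < (n_{k-1}+1) + \lambda_k < n_{k-1} + 2$, where the last inequality uses $\lambda_k < 1$. Thus $n_k < n_{k-1} + 2$, i.e. $n_k \le n_{k-1} + 1$, contradicting $n_k \ge n_{k-1}+2$. Hence, when $k \ge 2$, we cannot have $n_{k-1} < \sum_{m=1}^{k-1}\lambda_m$, so $n_{k-1} = \sum_{m=1}^{k-1}\lambda_m$, which is exactly the desired conclusion.

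One small point to handle carefully is that the statement's indexing refers to a \emph{specific} permutation of the eigenvalues and its associated partition, so ``$\lambda_k < 1$'' should be read relative to that ordering; the argument above uses only the relations among $n_{k-1}, n_k$ and $\sum_{m=1}^{k-1}\lambda_m, \lambda_k$ in that ordering, so no re-ordering issues arise. I would also note the degenerate-looking case $k = M$: there conditions (1) and (2) are stated for $k = 1,\dots,M-1$, but the hypothesis applies them at index $k-1 \le M-1$ and index $k$; if $k = M$ we only have (1) available at index $M$ as the endpoint equality $n_M = N = \sum_{m=1}^M \lambda_m$, which still gives $n_M \le \sum_{m=1}^{k-1}\lambda_m + \lambda_k$, so the chain goes through unchanged. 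I do not anticipate any serious obstacle; the only thing to be slightly wary of is getting the direction of every inequality and the off-by-one in the partition indices exactly right, since the whole argument is a short sequence of $\pm 1$ estimates.
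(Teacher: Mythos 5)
Your argument is correct, and it is exactly the intended derivation: the paper states this corollary without proof as a direct consequence of the characterization in Corollary \ref{cor6.1}, and your chain of inequalities (condition (2) at index $k-1$ forcing $n_k \ge n_{k-1}+2$, against condition (1) at indices $k-1$ and $k$ plus $\lambda_k<1$ forcing $n_k \le n_{k-1}+1$) is the natural way to extract it. Your handling of the endpoint case $k=M$ via $n_M = N = \sum_{m=1}^M \lambda_m$ is also right.
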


The necessary and sufficient conditions of Corollary \ref{cor6.1} provide us with the information that SFR may not be able to construct a certain unit norm frame with eigenvalues $\{\lambda_m\}_{m=1}^M$ if these conditions are not met. However, if we loosen our conditions on the frame to be equal norm and not necessarily unit norm, then we will see that PNSTC will be able to construct an equal norm frame with those same eigenvalues $\{\lambda_m\}_{m=1}^M$.

\begin{theorem} \label{thm1234}\cite{CHKWA}
Let $\{\lambda_m\}_{m=1}^M \subseteq \left( 0, \infty \right)$ be non-increasing. Then PNSTC can construct an equal-norm frame for $\mathcal{H}_M$ with eigenvalues $\{\lambda_m\}_{m=1}^M$.
\end{theorem}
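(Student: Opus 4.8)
The plan is to exploit the fact that the statement does not prescribe the number $N$ of frame vectors: we are free to choose $N$ as large as we wish, and having done so we reduce everything to the easy sufficient condition in Proposition \ref{prop345}. Write $\Lambda := \sum_{m=1}^M\lambda_m$ and $\lambda_{\min} := \lambda_M = \min_{1\le m\le M}\lambda_m$ (the minimum, since $\{\lambda_m\}$ is non-increasing). An equal-norm frame $\{f_n\}_{n=1}^N$ with common norm $c$ must satisfy $Nc^2 = \sum_{n=1}^N\|f_n\|^2 = \Lambda$, so $c^2 = \Lambda/N$; thus enlarging $N$ drives $c^2$ to $0$, and in particular we can force $2c^2 \le \lambda_{\min}$.

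Concretely, I would fix any integer $N$ with $N \ge 2\Lambda/\lambda_{\min}$, set $c := \sqrt{\Lambda/N}$, and take $a_n := c$ for all $n = 1,\dots,N$. Since $\Lambda \ge M\lambda_{\min}$ we automatically get $N \ge 2M \ge M$, so a frame of $N$ vectors for $\mathcal{H}_M$ is not excluded on size grounds, and the indices appearing below are all valid. Now I would check that the constant sequence $\{a_n\}_{n=1}^N$ together with the non-decreasing rearrangement of $\{\lambda_m\}_{m=1}^M$ meets the hypotheses of Proposition \ref{prop345}: both sequences are (weakly) non-decreasing, $\sum_{n=1}^N a_n^2 = Nc^2 = \Lambda = \sum_{m=1}^M\lambda_m$ by the choice of $c$, and $a_N^2 + a_{N-1}^2 = 2c^2 = 2\Lambda/N \le \lambda_{\min}$, which is exactly the first (smallest) term of the non-decreasing rearrangement. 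Hence the ``in particular'' clause of Proposition \ref{prop345} applies — monotonicity gives $a_{N-2L}^2 + a_{N-2L-1}^2 \le a_N^2 + a_{N-1}^2 \le \lambda_{\min} \le \lambda_{M-L}$ for every $L = 0,\dots,M-1$ — so the two sequences are Spectral Tetris ready. By Theorem \ref{thm3.6}, PNSTC then constructs a frame $\{f_n\}_{n=1}^N$ for $\mathcal{H}_M$ with $\|f_n\| = a_n = c$ for all $n$ and with eigenvalues $\{\lambda_m\}_{m=1}^M$; this is exactly an equal-norm frame with the prescribed spectrum.

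There is no genuine obstacle here: the content is the observation that $c^2 = \Lambda/N$ can be pushed below $\tfrac12\lambda_{\min}$ by taking $N$ large, after which Proposition \ref{prop345} does all the work. The only points that need a word of care are (i) that $N$ is ours to pick, since it is not fixed by the statement; (ii) that the eigenvalue list should be read in non-decreasing order to match the convention of Proposition \ref{prop345}, which is harmless because Spectral Tetris readiness and Theorem \ref{thm3.6} are stated up to permutation; and (iii) that $N \ge M$, which is automatic. An alternative, essentially equivalent route would bypass Proposition \ref{prop345} and verify Definition \ref{def3.4} directly for the constant sequence, using the forced partition $n_k = \big\lfloor c^{-2}\sum_{m=1}^k\lambda_m\big\rfloor$: condition (1) then holds by definition of the floor, condition (2)'s inequality $a_{n_k+2}^2 = c^2 \ge \sum_{m=1}^k\lambda_m - n_kc^2$ holds because the right side is $< c^2$, and the spacing bound $n_{k+1}-n_k \ge \lfloor \lambda_{k+1}/c^2\rfloor \ge \lfloor \lambda_{\min}/c^2\rfloor \ge 2$ is exactly where the choice $N \ge 2\Lambda/\lambda_{\min}$ is used again.
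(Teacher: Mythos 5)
Your proof is correct: since the theorem leaves $N$ unspecified, choosing $N \ge 2\sum_{m=1}^M\lambda_m/\lambda_M$ forces the common square norm $c^2=\Lambda/N$ to satisfy $2c^2\le\lambda_M$, which is exactly the ``in particular'' clause of Proposition \ref{prop345}, and Theorem \ref{thm3.6} then finishes the argument. The paper states this result without proof (citing \cite{CHKWA}), and your argument is precisely the standard one used there, so there is nothing to object to.
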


What originally started as a construction method specifically for unit norm tight Spectral Tetris frames, has now evolved into a complete characterization of Spectral Tetris frames. We have seen multiple adaptations of Spectral Tetris to construct UNTFs, unit norm frames and general frames. Finally, through PNSTC, we have a complete characterization for a Spectral Tetris construction of a frame with prescribed eigenvalues and prescribed vector norms. Also, we have seen that PNSTC can be specialized to include the previous Spectral Tetris cases to construct UNTFs, unit norm frames and tight frames. Aside from the fact that Spectral Tetris frames are easy to construct, their major advantage is the sparsity in the frames they construct. In fact, in all of the Spectral Tetris frame constructions, the frame vectors are at most 2-sparse. Even though Spectral Tetris cannot construct all frames, it can construct a large class of sparse frames and through the Spectral Tetris algorithms and classifications of eigenvalues and vector norms of a frame, researchers can now easily construct a large class of frames. Now that we have a complete characterization of Spectral Tetris frames, we wish to further Spectral Tetris to construct fusion frames. We will see that due to the sparsity of Spectral Tetris frames and the orthogonality of the frame vectors, we can generalize SFR and PNSTC to construct fusion frames.

\section{Spectral Tetris Fusion Frames Constructions}\label{ffsection}
Thus far we have seen how Spectral Tetris has developed and how it has been adapted to construct frames with desired properties. As mentioned earlier, it is not always the case in application that a problem can be solved with a single frame and may require two stage processing and the use of fusion frames. As such, we would like construction methods for fusion frames similar to that for frames so that researchers can easily construct fusion frames with desired properties. Seeing that fusion frames are a generalization of frames, then it is natural to think that SFR or PNSTC could be adapted to construct fusion frames. This is exactly correct and we will see that Spectral Tetris fusion frames stem nicely from Spectral Tetris frames. 

Although we have already discussed the relationship between frames and fusion frames in Section \ref{ffsec}, we reiterate the following relationship because it is essential in our Spectral Tetris fusion frame constructions.  

Consider the following fusion frame $\{\left(W_i,w_i\right)\}_{i=1}^D$ for $\mathcal{H}_M$ with frame operator $\widetilde{S}$. Let $\{f_{i,j}\}_{j=1}^{d_i}$ be an orthonormal basis for $W_i$ and $T$ the analysis operator for $\{f_{i,j}\}_{i=1,j=1}^{D,d_i}$, then we have the following equivalence:
\[\widetilde{S}x=\sum_{i=1}^Dw_i^2\left(P_i\left(x\right)\right)=\sum_{i=1}^D\sum_{j=1}^{d_i}w_i^2\langle x, f_{i,j}\rangle f_{i,j} =\]\[ \sum_{i=1}^D\sum_{j=1}^{d_i} \langle x, w_i f_{i,j}\rangle w_i f_{i,j}=TT^*x=Sx.\]

Thus we see that the fusion frame operator and the frame operator are equivalent in this scenario. Hence every fusion frame arises from a conventional frame partitioned into equal-norm, orthogonal sets. Because of this relationship and the fact that we have a complete characterization of Spectral Tetris frames, we would like to be able to construct a fusion frame via a Spectral Tetris frame with additional orthogonality requirements. This leads to the following terminology:

\begin{definition}\label{stffdef}
A frame constructed via the Spectral Tetris construction (SFR or PNSTC) is called a {\it Spectral Tetris frame}. A unit weighted fusion frame $\left(W_i\right)_{i=1}^D$ is called a \emph{Spectral Tetris fusion frame} if there is a partition of a Spectral Tetris frame $\{f_{i,j}\}_{i=1,j=1}^{D,d_i}$ such that $\{f_{i,j}\}_{j=1}^{d_i}$ is an orthonormal basis for $W_i$ for all $i = 1, \dots,D$.
\end{definition}

In Section \ref{sffrsection} and Section \ref{pnstcff}, the fusion frames we construct are unit weighted and thus in Definition \ref{stffdef}, we restrict ourselves to unit weighted fusion frames. However, in Section \ref{gffsec} we construct non-unit weighted fusion frames and this requires a more general definition, which we develop in that section. To construct our Spectral Tetris fusion frames, we will construct a Spectral Tetris frame and then group these frame vectors, so that each group of vectors is orthogonal and spans a subspace of the fusion frame.


\section{Spectral Tetris for 2-Sparse, Equidimensional, Unit-Weighted Fusion Frame Constructions}\label{sffrsection}

The first Spectral Tetris fusion frame construction occurred in \cite{CCHKP10}, where they adapted Spectral Tetris, more specifically SFR, to develop 2-sparse, equidimensional, unit-weighted fusion frames for any given fusion frame operator with eigenvalues greater than or equal to two. 

Before we discuss their construction algorithm, let's develop some background as to how it developed. Our goal is to determine the existence and construction of sparse fusion frames whose fusion frame operator possesses a desired spectrum. In particular, we would like to answer the questions:
\begin{enumerate}
\item Given a set of eigenvalues, does there exist a sparse fusion frame whose fusion frame operator possesses those eigenvalues?
\item If such a fusion frame exists how can it be constructed?
\end{enumerate}

In \cite{CCHKP10} they answer these questions for the case of unit norm, equidimensional, unit weighted fusion frames. 

Explicitly, in \cite{CCHKP10} they develop and analyze the following scenario:

Let $\lambda_1 \geq \dots \geq \lambda_M \geq 2$ be real values and $M \in \mathbb{N}$ satisfy the factorization 
\[\sum_{m=1}^M\lambda_m=kD\in\mathbb{N}.\]
Our goal is to construct a $2$-sparse fusion frame $\left(W_i\right)_{i=1}^D, W_i \subseteq \mathcal{H}_M$, such that:

(G1) dim$W_i = k$ for all $i = 1,\dots,D$ and

(G2) the associated fusion frame operator has $\{\lambda_m\}_{m=1}^M$ as its eigenvalues.

To construct such a fusion frame, in \cite{CCHKP10} they generalize the SFR algorithm and develop a new algorithm called Sparse Fusion Frame Construction for Real Eigenvalues (SFFR). The SFFR algorithm follows the same construction formula as the SFR algorithm; however, in the output stage of SFFR, the vectors $f_n$ are grouped in such a way so that the vectors assigned to each subspace form an orthonormal system. They also provide a sufficient condition for when the SFFR algorithm is able to construct a fusion frame which satisfies properties (G1) and (G2). This condition is very similar to the condition for SFR, and is as follows:
\begin{itemize}
\item If $m_0$ is the first integer in $\{1,\dots, M\}$ for which $\lambda_{m_0}$ is not an integer, then $\left\lfloor \lambda_{m_0} \right\rfloor \leq D-3$.
\end{itemize}

Hence if this condition is satisfied then SFFR will construct the desired fusion frame satisfying properties (G1) and (G2). Notice that the fusion frames constructable by SFFR must have all eigenvalues greater than or equal to two, just like in the SFR construction. In Table 3 we provide the SFFR algorithm from \cite{CCHKP10}, which constructs 2-sparse, equidimensional, unit-weighted fusion frames.

\begin{table}[ht]
\centering
\framebox{
\begin{minipage}[ht]{4.8in}
\vspace*{0.3cm}
{\sc \underline{SFFR: Sparse Fusion Frame Construction for Real Eigenvalues}}

\vspace*{0.4cm}

\noindent {\bf Parameters:}
\begin{itemize}
\item Dimension $M \in \mathbb{N}$.

\item Real eigenvalues $D \geq \lambda_1 \geq \dots\geq \lambda_M \geq 2$,  number of subspaces $D$, and dimension of subspaces $k$ satisfying $\sum_{m=1}^M\lambda_m=kD\in\mathbb{N}$.
\end{itemize}

\noindent {\bf Algorithm:}
\begin{itemize}
\item Set $j:=1$
\item For $m=1,\dots, M$ do 
		\begin{enumerate}
		\item Repeat
				\begin{enumerate}
				\item If $\lambda_m< 1$ then 
						\begin{enumerate}
						\item $f_j:=\sqrt{\frac{\lambda_m}{2}}\cdot e_m + \sqrt{1-\frac{\lambda_m}{2}}\cdot e_{m+1}$.
						\item $f_{j+1} := \sqrt{\frac{\lambda_m}{2}}\cdot e_m- \sqrt{1-\frac{\lambda_m}{2}}\cdot e_{m+1}$.
						\item $j:=j+2$.
						\item $\lambda_{m+1}:= \lambda_{m+1}-\left(2-\lambda_m\right)$.
						\item $\lambda_m:=0.$
						\end{enumerate}
				\item else
						\begin{enumerate}
						\item $f_j:=e_m$.
						\item $j:=j+1$.
						\item $\lambda_m:=\lambda_m-1$.
						\end{enumerate}
				\item{end}
				\end{enumerate}
		\item until $\lambda_m=0$.
		\end{enumerate}
\item end.
\end{itemize}

\noindent {\bf Output:}
\begin{itemize}
\item 2-sparse, equidimensional, unit weighted, fusion frame $\{W_i\}_{i=1}^D$ where $W_i:= \mbox{ span}\{f_{i+jD}:j=0,\dots, k-1\}$.
\end{itemize}

\end{minipage}
}
\vspace*{0.2cm}
\caption{The SFFR algorithm for constructing an 2 sparse, equidimensional, unit weighted, fusion frame with a desired frame operator.}
\label{sffralg}
\end{table}

To better illustrate the SFFR construction method, we will now provide an explicit example for constructing such a fusion frame. 

\begin{example}\label{sffrex}
We will construct a 2-sparse, equidimensional, unit-weighted fusion frame in $\mathcal{H}_3$ with 5 two-dimensional subspaces and spectrum $\{\lambda_m\}_{m=1}^3=\{ \frac{13}{3},\frac{10}{3},\frac{7}{3}\}$.

Note that $D=5\geq \frac{13}{3}\geq \frac{10}{3}\geq \frac{7}{3}\geq 2$ and $\sum_{m=1}^3\lambda_m=10=2\left(5\right)=kD\in\mathbb{N}$; hence the parameters of the algorithm are met. 

\begin{itemize}
\item Set $j=1$
\item For $m=1$ do 
		\begin{enumerate}
		\item $\lambda_1=\frac{13}{3} \geq 1$ then
						\begin{enumerate}
						\item $f_1:=e_1$.
						\item $j:=j+1=1+1=2$.
						\item $\lambda_1:=\lambda_1-1=\frac{13}{3}-1=\frac{10}{3}$.
						\end{enumerate}
		\item $\lambda_1=\frac{10}{3} \geq 1$ then
						\begin{enumerate}
						\item $f_2:=e_1$.
						\item $j:=2+1=3$.
						\item $\lambda_1:=\frac{10}{3}-1=\frac{7}{3}$.
						\end{enumerate}
		\item $\lambda_1=\frac{7}{3} \geq 1$ then
						\begin{enumerate}
						\item $f_3:=e_1$.
						\item $j:=3+1=4$.
						\item $\lambda_1:=\frac{7}{3}-1=\frac{4}{3}$.
						\end{enumerate}
			\item $\lambda_1=\frac{4}{3} \geq 1$ then
						\begin{enumerate}
						\item $f_4:=e_1$.
						\item $j:=4+1=5$.
						\item $\lambda_1:=\frac{4}{3}-1=\frac{1}{3}$.
						\end{enumerate}
			\item $\lambda_1=\frac{1}{3}< 1$ then 
						\begin{enumerate}
						\item $f_5:=\sqrt{\frac{\frac{1}{3}}{2}}\cdot e_1 + \sqrt{1-\frac{\frac{1}{3}}{2}}\cdot e_{2}=\sqrt{\frac{1}{6}}\cdot e_1 + \sqrt{\frac{5}{6}}\cdot e_{2}$.
						\item $f_6:=\sqrt{\frac{\frac{1}{3}}{2}}\cdot e_1 - \sqrt{1-\frac{\frac{1}{3}}{2}}\cdot e_{2}=\sqrt{\frac{1}{6}}\cdot e_1 - \sqrt{\frac{5}{6}}\cdot e_{2}$.
						\item $j:=5+2=7$.
						\item $\lambda_{m+1}=\lambda_2:= \lambda_{m+1}-\left(2-\lambda_m\right)=\frac{10}{3}-\left(2-\frac{1}{3}\right)=\frac{5}{3}$.
						\item $\lambda_1:=0.$
						\end{enumerate}
			\item end.
			\end{enumerate}
\item For $m=2$ (we have $\lambda_m=\lambda_2=\frac{5}{3}$ and $j=7$) do
		\begin{enumerate}
		\item $\lambda_2=\frac{5}{3} \geq 1$ then
						\begin{enumerate}
						\item $f_7:=e_2$.
						\item $j:=j+1=8$.
						\item $\lambda_2:=\lambda_2-1=\frac{2}{3}$.
						\end{enumerate}
		\item $\lambda_2=\frac{2}{3} < 1$ then
						\begin{enumerate}
						\item $f_8:=\sqrt{\frac{\frac{2}{3}}{2}}\cdot e_2 + \sqrt{1-\frac{\frac{2}{3}}{2}}\cdot e_{3}=\sqrt{\frac{1}{3}}\cdot e_2 + \sqrt{\frac{2}{3}}\cdot e_{3}$.
						\item $f_9:=\sqrt{\frac{\frac{2}{3}}{2}}\cdot e_2 - \sqrt{1-\frac{\frac{2}{3}}{2}}\cdot e_{3}=\sqrt{\frac{1}{3}}\cdot e_2 - \sqrt{\frac{2}{3}}\cdot e_{3}$.
						\item $j:=j+2=10$.
						\item $\lambda_{m+1}=\lambda_3:= \lambda_{3}-\left(2-\lambda_2\right)=\frac{7}{3}-\left(2-\frac{2}{3}\right)=1$.
						\item $\lambda_2:=0.$
						\end{enumerate}
		\item end.
		\end{enumerate}
\item For $m=3$ (we have $\lambda_m=\lambda_3=1$ and $j=10$) do
		\begin{enumerate}
		\item $\lambda_3=1 \geq 1$ then
						\begin{enumerate}
						\item $f_{10}:=e_3$.
						\item $j:=j+1=11$.
						\item $\lambda_3:=\lambda_3-1=0$.
						\end{enumerate}
		\item end.
		\end{enumerate}
\item end.
\end{itemize}

\noindent {\bf Output:}
\begin{itemize}
\item Define our two-dimensional subspaces $\{W_i\}_{i=1}^5$ as the following \[W_i:= \mbox{ span}\{f_{i+5j}:j=0,1\}.\] Explicitly, this yields:
\[W_1=\mbox{ span}\{f_1,f_6\}, W_2=\mbox{ span}\{f_2,f_7\}, W_3=\mbox{ span}\{f_3,f_8\},\] \[W_4=\mbox{ span}\{f_4,f_9\}, W_5=\mbox{ span}\{f_5,f_{10}\}.\]
It is straightforward to check that each of the subspaces $\{W_i\}_{i=1}^5$ is 2-dimensional and the spectrum of the fusion frame operator is $\{ \frac{13}{3},\frac{10}{3},\frac{7}{3}\}$. Therefore $\{W_i\}_{i=1}^5$ is a 2-sparse, equidimensional, unit weighted fusion frame with 5 two dimensional subspaces and spectrum $\{ \frac{13}{3},\frac{10}{3},\frac{7}{3}\}$. Hence the SFFR construction algorithm constructed the desired fusion frame.

\end{itemize}

\end{example}

\begin{remark}\label{sfrex}
Example \ref{sffrex} can be slightly simplified to also be an example of the SFR construction for a unit norm frame, see SFR Algorithm in Table 1. Explicitly in Example \ref{sffrex}, to adapt the SFFR algorithm construction to a construction for SFR our parameters and output would change to the following:

{\bf \emph{New SFR} Parameters:}
\begin{itemize}
\item Dimension $3 \in \mathbb{N}$.

\item Real eigenvalues $5 \geq \frac{13}{3}\geq \frac{10}{3} \geq \frac{7}{3} \geq 2$,  number of frame vectors $10$ satisfying $\sum_{m=1}^3\lambda_m=10=D\in\mathbb{N}$.
\end{itemize}

{\bf Algorithm:} The algorithm will be the exact same as in Example \ref{sffrex}.

{\bf \emph{New SFR} Output:}
\begin{itemize}
\item Unit norm frame $\{f_j\}_{j=1}^{10}$ with spectrum $\{\lambda_m\}_{m=1}^3=\{\frac{13}{3},\frac{10}{3},\frac{7}{3}\}$.
\end{itemize}
\end{remark}

We generalize the SFFR algorithm in the following Theorem.

\begin{theorem} \cite{CCHKP10}
Suppose the real values $D\geq \lambda_1\geq \dots \geq \lambda_M\geq 2, D \in \mathbb{N}$, and $k \in \mathbb{N}$ satisfy $\sum_{m=1}^M\lambda_m=kD\in\mathbb{N}$ as well as the following conditions:
\begin{enumerate}
\item $\lambda_M \geq 2$,
\item If $m_0$ is the first integer in $\{1,\dots, M\}$ for which $\lambda_{m_0}$ is not an integer, then $\left\lfloor \lambda_{m_0} \right\rfloor \leq D-3$.
\end{enumerate}

Then the fusion frame $\{W_i\}_{i=1}^D$ constructed by SFFR fulfills conditions (G1) and (G2) and the fusion frame is 2-sparse.
\end{theorem}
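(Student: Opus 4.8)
The plan is to reduce everything but one point to the properties of the SFR construction already established in Section~\ref{sfrsec}, and then to supply that one point: the vectors that SFFR assigns to a common subspace are mutually orthogonal.

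First I would observe that, on the level of the iteration, SFFR is nothing but SFR: the bodies of Table~\ref{sfralg} and Table~\ref{sffralg} coincide, so SFFR produces the same sequence $\{f_n\}_{n=1}^{N}$, $N=kD=\sum_{m=1}^M\lambda_m$, that SFR produces from the data $(M,\{\lambda_m\}_{m=1}^M)$. Next I would check that the SFFR hypotheses imply those of the SFR sufficiency theorem in Section~\ref{sfrsec}: $\lambda_M\ge2$ is assumed in both, and if $\lambda_\ell\notin\mathbb{Z}$ for some $\ell$, then $\ell\ge m_0$, where $m_0$ is the first index with $\lambda_{m_0}\notin\mathbb{Z}$, so monotonicity gives $\lfloor\lambda_\ell\rfloor\le\lfloor\lambda_{m_0}\rfloor\le D-3\le kD-3=N-3$. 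Hence $\{f_n\}_{n=1}^N$ is a unit norm, $2$-sparse frame for $\mathcal{H}_M$ whose frame operator $S$ has eigenvalues $\{\lambda_m\}_{m=1}^M$. Since each $W_i=\operatorname{span}\{f_{i+jD}:0\le j\le k-1\}$ is then spanned by vectors that are at most $2$-sparse, the $2$-sparsity of $\{W_i\}_{i=1}^D$ will be immediate once we know it is a fusion frame.

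The heart of the matter is the estimate $\langle f_n,f_{n'}\rangle=0$ whenever $|n-n'|\ge D$; granting it, the $k$ vectors $f_i,f_{i+D},\dots,f_{i+(k-1)D}$ have indices differing by multiples of $D$, so they are pairwise orthogonal, and being unit norm they form an orthonormal system. To prove the estimate I would track supports: each $f_n$ is either a standard basis vector $e_m$ (a greedy step) or a column of a block $A(x)$ placed in rows $m,m+1$ (a block step), so $\operatorname{supp} f_n\subseteq\{e_m\}$ or $\{e_m,e_{m+1}\}$; consequently $\langle f_n,f_{n'}\rangle\ne0$ forces $n$ and $n'$ into a common set $I_m:=\{n:(f_n)_m\ne0\}$, and $I_m$ is a block of consecutive indices. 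Row $m$ is filled, in order, by at most the two columns of an incoming block from row $m-1$, then $g_m:=\lfloor\Lambda_m\rfloor$ greedy columns $e_m$, then at most the two columns of an outgoing block to row $m+1$, where $\Lambda_m$ is the value of $\lambda_m$ when its loop begins. If there is no incoming block then $\Lambda_m=\lambda_m$; if there is one, it has already deposited weight $2-\lambda_{m-1}'>1$ into row $m$ (with $\lambda_{m-1}'\in(0,1)$ the residual of $\lambda_{m-1}$), so $\Lambda_m<\lambda_m-1$. Blocks occur only from index $m_0$ on, and hypothesis~(2) gives $\lambda_{m_0}<D-2$, hence $\lambda_m\le\lambda_{m_0}<D-2$ for all $m\ge m_0$ by monotonicity. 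Putting the cases together: for $m<m_0$ there are no blocks and $I_m$ is a run of $\lambda_m\le\lambda_1\le D$ copies of $e_m$, so two of its indices differ by at most $D-1$; if $m$ has no incoming block (in particular if $m=m_0$), then $g_m=\lfloor\lambda_m\rfloor\le D-3$ and two indices of $I_m$ differ by at most $g_m+1\le D-2$; and if $m$ has an incoming block, then $g_m\le\Lambda_m<\lambda_m-1<D-3$, so two indices of $I_m$ differ by at most $g_m+3<D$. In every case two distinct indices in one $I_m$ differ by less than $D$, which is the estimate.

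Finally, with orthonormality in hand, each $\{f_{i+jD}\}_{j=0}^{k-1}$ is a linearly independent set of $k$ vectors spanning $W_i$, so $\dim W_i=k$, which is (G1), and it is an orthonormal basis of $W_i$. Because every weight equals $1$, the frame/fusion-frame correspondence recalled at the start of Section~\ref{ffsection} applies with $f_{i,j}:=f_{i+jD}$ and yields $\widetilde S=S$, where $\widetilde S$ is the fusion frame operator of $\{W_i\}_{i=1}^D$ and $S$ the frame operator of $\{f_n\}_{n=1}^N$. In particular $\widetilde S$ is positive and invertible (its eigenvalues are $\ge2$), so $\{W_i\}_{i=1}^D$ is a fusion frame, its eigenvalues are those of $S$, namely $\{\lambda_m\}_{m=1}^M$ --- this is (G2) --- and the $2$-sparsity noted above applies. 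The one genuinely nontrivial point is the bandwidth estimate: one must verify carefully that an incoming block always removes more than a unit of slack from $\Lambda_m$, so that the single scalar hypothesis $\lfloor\lambda_{m_0}\rfloor\le D-3$ forces every support block $I_m$ to be shorter than $D$.
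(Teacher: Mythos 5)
Your proof is correct. Note that the survey states this theorem without proof (it is cited from \cite{CCHKP10}), so there is no in-paper argument to compare against; your reduction to the SFR sufficiency theorem plus the support-bandwidth estimate $\langle f_n,f_{n'}\rangle=0$ for $|n-n'|\ge D$ is exactly the natural argument, and it is consistent with the orthogonality bound $|n'-n|\ge\lfloor N/M\rfloor+3$ that the paper does record for tight Spectral Tetris frames. The one delicate step --- that an incoming block reduces the residual weight $\Lambda_m$ by more than $1$, so that $\lfloor\lambda_{m_0}\rfloor\le D-3$ together with monotonicity keeps every support run $I_m$ shorter than $D$ --- is handled correctly in all three cases ($m<m_0$, no incoming block, incoming block).
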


Through SFFR, we can construct a 2-sparse, equidimensional, unit weighted fusion frame with prescribed spectrum. Sometimes it is useful to extend such a fusion frame by adding more subspaces so that it becomes a tight fusion frame, since tight fusion frames possess nice reconstruction properties. The following Theorem provides sufficient conditions for when and what types of subsets can be added to a fusion frame in order to obtain a tight fusion frame.

\begin{theorem}\label{thm4.10}\cite{CCHKP10}
Let $\{W_i\}_{i=1}^D$ be a fusion frame for $\mathcal{H}_M$ with dim$W_i = k < M$ for all $i =1,\dots,D$, and let $\widetilde{S}$ be the associated fusion frame operator with eigenvalues $D\geq\lambda_1 \geq\dots \geq \lambda_M \geq 2$ and eigenvectors $\{e_m\}_{m=1}^M$. Further, let $A$ be the smallest positive integer, which satisfies the
following conditions:
\begin{enumerate}
\item $\lambda_1+2\leq A.$
\item $AM =kN_0$ for some $N_0 \in \mathbb{N}$.
\item $A \leq \lambda_M + N_0-\left(D+3\right)$.
\end{enumerate}

Then there exists a fusion frame $\{V_i\}_{i=1}^{N_0-D}$ for $\mathcal{H}_M$ with dim$V_i=k$ for all $i \in \{1,\dots,N_0-D\}$ so that $\{W_i\}_{i=1}^D \cup \{V_i\}_{i=1}^{N_0-D}$ is an $A$-tight fusion frame. 
\end{theorem}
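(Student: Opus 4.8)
The plan is to realize the new subspaces $\{V_i\}$ as a Spectral Tetris fusion frame whose fusion frame operator is the complementary operator $AI-\widetilde{S}$; adjoining them to $\{W_i\}$ then gives fusion frame operator $\widetilde{S}+(AI-\widetilde{S})=AI$, that is, an $A$-tight fusion frame with all subspaces of dimension $k$.

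First I would record the arithmetic forced by the hypotheses. Since $\{W_i\}_{i=1}^D$ is unit-weighted with $\dim W_i=k$, we have $\sum_{m=1}^M\lambda_m=\operatorname{tr}\widetilde{S}=kD$. Put $D':=N_0-D$ and let $\mu_1\ge\cdots\ge\mu_M$ be the numbers $\{A-\lambda_m\}_{m=1}^M$ arranged in non-increasing order, so $\mu_m=A-\lambda_{M+1-m}$. Then $AI-\widetilde{S}$ is positive and self-adjoint, with eigenvalues $\mu_m$ and eigenvectors the suitably reordered basis $\{e_m\}$. Condition (1) gives $\mu_M=A-\lambda_1\ge 2$, hence $\mu_m\ge 2$ for all $m$; condition (3) rewrites as $\mu_1=A-\lambda_M\le N_0-D-3=D'-3$, so $D'\ge\mu_1+3\ge 5>0$ and the number of subspaces $D'$ strictly exceeds the largest eigenvalue $\mu_1$; and condition (2) together with $\sum\lambda_m=kD$ yields $\sum_{m=1}^M\mu_m=MA-kD=kN_0-kD=kD'$, which is exactly the trace condition for $D'$ subspaces of dimension $k$.

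Next I would verify that $AI-\widetilde{S}$ satisfies the sufficient condition of the SFFR construction theorem stated just above: beyond $D'\ge\mu_1\ge\cdots\ge\mu_M\ge 2$ and $\sum\mu_m=kD'\in\mathbb{N}$, one needs that if $m_0$ is the first index with $\mu_{m_0}\notin\mathbb{Z}$ then $\lfloor\mu_{m_0}\rfloor\le D'-3$; but $\lfloor\mu_{m_0}\rfloor\le\mu_{m_0}\le\mu_1\le D'-3$ by the rewriting of condition (3), and the requirement is vacuous when every $\mu_m$ is an integer. Running SFFR with parameters $M$, $k$, $D'$ and eigenvalue sequence $\{\mu_m\}$ --- but constructing the frame vectors against the eigenbasis $\{e_m\}$ of $\widetilde{S}$, suitably ordered, rather than against the standard basis, which is legitimate since Spectral Tetris may be carried out against any fixed orthonormal basis --- produces a $2$-sparse, unit-weighted, equidimensional fusion frame $\{V_i\}_{i=1}^{D'}$ with $\dim V_i=k$ for all $i$ and with fusion frame operator exactly $AI-\widetilde{S}$.

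Finally I would combine the two families. Since both are unit-weighted, the fusion frame operator of $\{W_i\}_{i=1}^{D}\cup\{V_i\}_{i=1}^{N_0-D}$ is the sum of the orthogonal projections onto the $W_i$ and onto the $V_i$, namely $\widetilde{S}+(AI-\widetilde{S})=AI$, which is invertible; hence the union is a fusion frame, it is $A$-tight, and every subspace has dimension $k$. I do not expect a hard step here --- the content is entirely the bookkeeping of the two middle paragraphs --- and the only point demanding care is the eigenbasis matching: one must build $\{V_i\}$ against the eigenvectors of $\widetilde{S}$, in the order that turns $\operatorname{diag}(\mu_1,\dots,\mu_M)$ into $AI-\widetilde{S}$ itself rather than merely a unitarily equivalent operator. (That an admissible $A$ exists at all, which the statement presupposes, follows by choosing $A$ to be a sufficiently large multiple of $k/\gcd(k,M)$, for which conditions (1)--(3) all hold simultaneously.)
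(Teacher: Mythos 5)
The paper states Theorem \ref{thm4.10} without proof, citing \cite{CCHKP10}; your argument is correct and is essentially the argument of that source: verify that the complementary eigenvalue sequence $\{A-\lambda_m\}_{m=1}^M$ satisfies the SFFR hypotheses for $N_0-D$ subspaces of dimension $k$ (the trace identity $\sum_m(A-\lambda_m)=k(N_0-D)$ from condition (2), the lower bound $A-\lambda_1\geq 2$ from condition (1), and the bound $A-\lambda_M\leq (N_0-D)-3$ from condition (3)), run SFFR against the eigenbasis of $\widetilde{S}$, and add the resulting fusion frame operator $AI-\widetilde{S}$ to $\widetilde{S}$. Your explicit attention to aligning the constructed operator with $AI-\widetilde{S}$ itself, rather than a unitarily equivalent operator, is exactly the one point that needs care, and you have handled it.
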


The number of $k$-dimensional subspaces added in Theorem \ref{thm4.10} to extend a fusion frame to a tight fusion frame is in fact the smallest number that can be added in general.

We have seen that with little extra work, we can extend the SFR frame construction algorithm to construct a 2-sparse, unit weighted, equidimensional fusion frame with prescribed spectrum via the SFFR algorithm. Next we would like to be able to extend Spectral Tetris to construct a more general fusion frame, such as a non-equidimensional, unit weighted fusion frame with prescribed spectrum. 

\section{Spectral Tetris for Unit Weighted Fusion Frame Constructions}\label{pnstcff}

As we have seen, in \cite{CCHKP10} they adapted SFR to construct 2-sparse, equidimensional, unit weighted fusion frames with all eigenvalues greater than or equal to two. What if we wanted to construct a fusion frame with fewer restrictions? In \cite{CFHWZ} they generalized Spectral Tetris to construct unit-weighted fusion frames where the subspaces were not necessarily equidimensional and the eigenvalues need only to be positive. They also provide sufficient conditions for when this is possible, and provide necessary and sufficient conditions in the case of tight fusion frames with eigenvalues greater than or equal to two. 

To construct unit weighted fusion frames, we will first use PNSTC to construct a frame and then use this Spectral Tetris frame to obtain a {\it reference fusion frame}. This reference fusion frame is not our desired fusion frame, it is however a major step in the construction of our fusion frame. Before we present the Reference Fusion Frame (RFF) algorithm, we first need to define a few terms.

\begin{definition}
Given an $M \times N$ frame matrix $T^*=[f_1 \cdots f_N]$ representing an $N$-element frame in $\mathcal{H}_{M}$ against the eigenbasis of its frame operator, we have to following:
\begin{enumerate}
\item The {\it support size of a row} is the number of nonzero entries in that row. 
\item The {\it support of a frame vector $f_i$}, denoted supp$f_i$, is its nonzero entries.
\end{enumerate}
\end{definition}

\begin{definition}
Let $N \geq M$ be positive integers, and let $\{\lambda_m\}_{m=1}^M \subseteq \left(0, \infty \right)$ have the property that $\sum_{m=1}^M\lambda_m=N$. The fusion frame constructed by the algorithm RFF presented below in Table 4 is called the {\it reference fusion frame} for the eigenvalues $\left( \lambda_m\right)_{m=1}^M$.
\end{definition}
 
In Table 4 we present the Reference Fusion Frame Algorithm from \cite{CFHWZ}.

\begin{table}[ht]
\centering
\framebox{
\begin{minipage}[ht]{4.8in}
\vspace*{0.3cm}
{\sc \underline{RFF: Reference Fusion Frame Spectral Tetris Construction}}

\vspace*{0.4cm}

\noindent {\bf Parameters:}
\begin{itemize}
\item Dimension $M \in \mathbb{N}$.

\item Number of frame elements $N \in \mathbb{N}$.

\item Eigenvalues $\{\lambda_m\}_{m=1}^M \subseteq $ (0, $\infty$) such that $\sum_{m=1}^M\lambda_m=N$ (unit norm).
\end{itemize}

\noindent {\bf Algorithm:}
\begin{enumerate}
\item Use PNSTC for $\{\lambda_m\}_{m=1}^M$ with unit norm vectors to get a Spectral Tetris frame $F=\{f_n\}_{n=1}^N$.
\item $t:=$ maximal support size of the rows of F.
\item $S_i:=\emptyset$ for $i=1,\dots,t$.
\item $k=0$.
\item Repeat.
		\begin{enumerate}
		\item $k:=k+1$.
		\item $j:= \mbox{ min}\{1\leq r \leq t: \mbox{ supp} f_k \cap \mbox{ supp}f_s = \emptyset$ for all $f_s \in S_r\}$.
		\item $S_j:= S_j \cup \{f_k \}$.
		\end{enumerate}
\item until $k=N$.
\end{enumerate}	

\noindent {\bf Output:}
\begin{itemize}
\item Fusion frame $\left(V_i\right)_{i=1}^t$, where $V_i = \mbox{ span}\left(S_i\right)$ for $i=1,\dots, t.$
\end{itemize}
\end{minipage}
}
\vspace*{0.2cm}
\caption{The RFF algorithm for constructing the reference fusion frame.}
\label{rffalg}
\end{table}

It is important to note that in order to construct a reference fusion frame, the frame needs to be unit norm but not necessarily tight. 

\begin{example}\label{rffex}
Recall the unit norm tight frame with eigenvalues $\lambda=\frac{11}{4}$ constructed in Example \ref{stcex},
$$T^*=[f_1 f_2 \cdots f_{11}]=\left[\begin{array}{ccccccccccc}
1&1&\sqrt{\frac{3}{8}}&\sqrt{\frac{3}{8}}&0&0&0&0&0&0&0\\
0&0&\sqrt{\frac{5}{8}}&-\sqrt{\frac{5}{8}}&1&\sqrt{\frac{2}{8}}&\sqrt{\frac{2}{8}}&0&0&0&0\\
0&0&0&0&0&\sqrt{\frac{6}{8}}&-\sqrt{\frac{2}{8}}&1&\sqrt{\frac{7}{8}}&\sqrt{\frac{7}{8}}&0\\
0&0&0&0&0&0&0&0&\sqrt{\frac{7}{8}}&-\sqrt{\frac{7}{8}}&1
\end{array} \right].$$

It is a straight forward check of the RFF algorithm to see that the reference fusion frame given for frame $T^*$ is as follows:
\[V_1=\mbox{ span}\{f_1,f_5,f_8,f_{11}\}, V_2=\mbox{ span}\{f_2,f_6\},\] \[ V_3= \mbox{ span}\{f_3,f_9\}, V_4=\mbox{ span}\{f_4,f_{10}\}, V_5=\mbox{ span}\{f_7\}.\]
\end{example}

Note that different orderings of the eigenvalues of a frame will in general lead to different sequences of dimensions of the reference fusion frame, as the following example shows. In Example \ref{rffex}, we constructed the reference fusion frame from a unit norm tight frame and so we clearly do not have this issue. 

\begin{example}
We will construct a 10-element unit norm frame in $\mathcal{H}_3$ with eigenvalues $\{\lambda_m\}_{m=1}^3=\{\frac{13}{3},\frac{10}{3}, \frac{7}{3}\}$ using PNSTC/SFR and then construct its reference fusion frame. In Example \ref{sffrex} we already constructed the corresponding frame, which is as follows:
$$[f_1\cdots f_8]=\left[\begin{array}{cccccccccc}
1&1&1&1&\sqrt{\frac{1}{6}}&\sqrt{\frac{1}{6}}&0&0&0&0\\
0&0&0&0&\sqrt{\frac{5}{6}}&-\sqrt{\frac{5}{6}}&1&\sqrt{\frac{1}{3}}&\sqrt{\frac{1}{3}}&0\\
0&0&0&0&0&0&0&\sqrt{\frac{2}{3}}&-\sqrt{\frac{2}{3}}&1
\end{array}\right].$$

Thus the reference fusion frame constructed by RFF is as follows:
\[V_1=\mbox{ span}\{f_1,f_7,f_{10}\}, V_2=\mbox{ span}\{f_2,f_8\},\] \[ V_3= \mbox{ span}\{f_3,f_9\}, V_4=\mbox{ span}\{f_4\}, V_5=\mbox{ span}\{f_5\}, V_6=\mbox{ span}\{f_6\}.\]

However, if we reorder the same eigenvalues in the following way: $\{\lambda_m\}_{m=1}^3=\{\frac{7}{3},\frac{13}{3},\frac{10}{3}\}$, then PNSTC yields the following frame:

$$[g_1\cdots g_{10}]=\left[\begin{array}{cccccccccc}
1&1&\sqrt{\frac{1}{6}}&\sqrt{\frac{1}{6}}&0&0&0&0&0&0\\
0&0&\sqrt{\frac{5}{6}}&-\sqrt{\frac{5}{6}}&1&1&\sqrt{\frac{1}{3}}&\sqrt{\frac{1}{3}}&0&0\\
0&0&0&0&0&0&\sqrt{\frac{2}{3}}&-\sqrt{\frac{2}{3}}&1&1
\end{array}\right].$$

Thus the reference fusion frame which RFF constructs for this frame is:
\[V_1=\mbox{ span}\{g_1,g_5,g_9\}, V_2=\mbox{ span}\{g_2,g_6,g_{10}\},\] \[ V_3= \mbox{ span}\{g_3\}, V_4=\mbox{ span}\{g_4\}, V_5=\mbox{ span}\{g_7\}, V_6=\mbox{ span}\{g_8\}.\]

\end{example}

Hence different orderings can lead to different reference fusion frames, which as we will see, will alter the steps in our final fusion frame algorithm. Next we will use the reference fusion frame to help us construct our desired unit weighted fusion frame. The following Theorem \ref{thm4.4} provides sufficient conditions for when a Spectral Tetris fusion frame exists; but we first need the definition of a {\it chain}.

\begin{definition}
Let $S$ be a set of vectors in $\mathcal{H}_M$, and $s \in S$. We say that a subset $C \subseteq S$ is a {\it chain in $S$ starting at s}, if $s \in S$ and the support of any element in $S$ intersects the support of some element of $S$. We say that $C$ is a {\it maximal chain in $S$ starting at $s$} if $C$ is not a proper subset of any other chain in $S$ starting at $s$.
\end{definition}

\begin{theorem}\label{thm4.4}\cite{CFHWZ}
Let $N\geq M$ be positive integers, $\left(\lambda_m\right)_{m=1}^M \subseteq \left(0,\infty \right)$ and let $\left(d_i\right)_{i=1}^D \subseteq \mathbb{N}$ be a non-increasing sequence of dimensions such that $\sum_{m=1}^M\lambda_m=\sum_{i=1}^Dd_i=N$. Let $\left(V_i\right)_{i=1}^t$ be the references fusion frame for $\left(\lambda_m\right)_{m=1}^M$. If we have the majorization $\left(\mbox{dim }V_n\right)_{n=1}^t \succeq \left(d_i\right)_{i=1}^D$, then there exists a Spectral Tetris fusion frame $\left(W_i\right)_{i=1}^D$ for $\mathcal{H}_M$ with dim$W_i=d_i$ for $n=1,\dots,D$ and eigenvalues $\left(\lambda_m\right)_{m=1}^M$. 
\end{theorem}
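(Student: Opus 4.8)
The plan is to unwind the Reference Fusion Frame construction, reduce the statement to a combinatorial re-grouping problem for the underlying Spectral Tetris frame, and then solve that problem by a majorization (Robin Hood transfer) argument. First I would record what $\left(V_i\right)_{i=1}^t$ really is: running RFF produces, via PNSTC, a unit-norm Spectral Tetris frame $F=\{f_n\}_{n=1}^N$ with eigenvalues $\left(\lambda_m\right)_{m=1}^M$ together with a partition of $\{f_n\}_{n=1}^N$ into sets $S_1,\dots,S_t$ whose members have pairwise disjoint supports; since the frame is unit norm each $S_i$ is orthonormal, so $\dim V_i=|S_i|=:s_i$ and $\sum_{i=1}^t s_i=N$. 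The key remark is that \emph{any} partition of $\{f_n\}_{n=1}^N$ into orthonormal sets $T_1,\dots,T_D$ yields a unit-weighted fusion frame $\left(W_j\right)_{j=1}^D$, $W_j:=\operatorname{span}T_j$, whose fusion frame operator satisfies
\[\widetilde{S}x=\sum_{j=1}^D P_jx=\sum_{j=1}^D\sum_{f\in T_j}\langle x,f\rangle f=\sum_{n=1}^N\langle x,f_n\rangle f_n=Sx,\]
the frame operator of $F$; hence $\left(W_j\right)_{j=1}^D$ has eigenvalues $\left(\lambda_m\right)_{m=1}^M$ and is a Spectral Tetris fusion frame by Definition \ref{stffdef}. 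So it suffices to re-partition $\{f_n\}_{n=1}^N$ into orthonormal sets of prescribed sizes $d_1\ge\cdots\ge d_D$, and for that it is enough to partition into sets of pairwise disjoint supports of those sizes, since disjointness of supports forces orthogonality.

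Next I would exploit the structure of a unit-norm Spectral Tetris frame: every $f_n$ has support a single row $\{m\}$ (when $f_n=e_m$) or a pair of consecutive rows $\{m,m+1\}$ (when $f_n$ arises from a $2\times2$ building block). Thus the $N$ supports are intervals of length $1$ or $2$ inside $\{1,\dots,M\}$, partitioning $\{f_n\}$ into disjoint-support sets is exactly properly colouring the overlap graph $G$ of these intervals, and $t$ (the maximal support size of a row) is the maximum number of intervals through a common row, i.e. $\omega(G)$; the greedy RFF assignment is an optimal $t$-colouring with class sizes $\left(s_i\right)_{i=1}^t$. The task becomes: given $\left(s_i\right)_{i=1}^t\succeq\left(d_j\right)_{j=1}^D$, produce a proper colouring of $G$ with class sizes exactly $d_1,\dots,d_D$.

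The heart of the argument is a single re-grouping move, which I expect to be the main obstacle. \emph{Claim:} if the $f_n$ are partitioned into disjoint-support classes and $A,B$ are two of them with $|A|>|B|$, then $A\cup B$ can be re-partitioned into disjoint-support classes $A',B'$ with $|A'|=|A|-1$ and $|B'|=|B|+1$, the other classes untouched. The reason is that the overlap graph restricted to $A\cup B$ is bipartite (each overlapping pair straddles $A$ and $B$) and, as an induced subgraph of an interval graph, chordal; a bipartite chordal graph is a forest, and here every vertex has degree at most $2$, since an interval of length at most $2$ can meet at most two pairwise disjoint intervals. Hence $A\cup B$ decomposes into chains that are paths, each of which two-colours in a forced way up to swapping its two sides; the pair $(A,B)$ is one such two-colouring, and since $|A|>|B|$ some odd-length path gives $A$ one vertex more than $B$, so flipping the colours on that single path effects the desired unit transfer. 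This is exactly where Spectral Tetris structure is essential: for abstract orthonormal families no such move need exist, and one cannot in general even move a single basis vector from the larger class to the smaller one.

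Finally I would assemble the pieces. From $\left(s_i\right)_{i=1}^t\succeq\left(d_j\right)_{j=1}^D$ with each $d_j\ge1$ one gets $D\ge t$, since otherwise the partial sum inequality at index $D$ would force $s_{D+1}=\cdots=s_t=0$; pad $\left(s_i\right)$ with $D-t$ zeros. By the classical fact that the majorization order on integer partitions is generated by Robin Hood transfers, there is a finite chain of transfers — each moving one unit from a part of size $a$ to a part of size $b$ with $a\ge b+2$ — carrying $(s_1,\dots,s_t,0,\dots,0)$ to $(d_1,\dots,d_D)$. Each transfer is realised by the move of the previous paragraph, applicable because $a\ge b+2>b$ means the two classes genuinely differ in size. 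We thus obtain a partition of $\{f_n\}_{n=1}^N$ into disjoint-support, hence orthonormal, sets of sizes $d_1,\dots,d_D$, whose spans form a Spectral Tetris fusion frame $\left(W_i\right)_{i=1}^D$ for $\mathcal{H}_M$ with $\dim W_i=d_i$ and eigenvalues $\left(\lambda_m\right)_{m=1}^M$, as required. The delicate point throughout is the verification underlying the re-grouping move (the degree bound, the forest/chain structure, and the parity count producing an available odd path); the majorization step on top of it is routine.
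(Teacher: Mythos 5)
Your proof is correct and follows essentially the same route as the paper's: both start from the RFF partition and repeatedly transfer one unit of dimension between two classes, where the transfer is effected either by moving a single vector of disjoint support (the paper's Case 1) or by flipping a maximal chain containing one more element of the larger class (the paper's Case 2, which is exactly your odd-path flip). Your packaging via Robin Hood transfers and the degree-$\le 2$ interval-overlap graph is just a more explicit justification of the paper's assertion that maximal chains split almost evenly between the two classes, so there is nothing genuinely different to report.
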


Next, we provide the proof for Theorem \ref{thm4.4} from \cite{CFHWZ} because it is very constructive in nature and helps the reader to determine how the fusion frame subspaces are developed. Also, the proof leads nicely into the fusion frame construction algorithm. 

\begin{proof}
We show how to iteratively construct the desired fusion frame $\left(W_i\right)_{i=1}^D$. Let $t$ and $S_1,\dots,S_t$ be given by RFF for $\left(\lambda_m\right)_{m=1}^M$. Let $W_i^0=S_i$ for $i=1,\dots,t$. We add empty sets if necessary to obtain a collection $\left( W_i^0\right)_{i=1}^D$ of $D$ sets. If $\sum_{i=1}^D ||W_i^0|-d_i|=0$ then the sets $\left(W_i^0\right)_{i=1}^D$ span the desired fusion frame. Otherwise, starting from $\left(W_i^0\right)_{i=1}^D$, we will construct the spanning sets of the desired fusion frame. 

Let \[m=\mbox{ max}\{j \leq D: d_j \neq |W_j^0|\}.\]

Note that $\sum_{i=1}^m|W_i^0|=\sum_{i=1}^md_i$ by the choice of $m$, and $\sum_{i=1}^{m-1}|W_i^0|>\sum_{i=1}^{m-1}d_i$ by the majorization assumption. Therefore, $d_m > |W_m^0|$ and there exists \[k= \mbox{ max}\{j<m:|W_j^0|>d_j\}.\] 

Notice that $|W_m^0|<d_m\leq d_k<|W_k^0|$ implies $|W_m^0|+2\leq |W_k^0|$.

We now have to consider two cases:

{\bf Case 1:}

If there exists at least one element $w \in W_k^0$, which has disjoint support from every element in $W_m^0$, then pick one such $w \in W_k^0$ satisfying this property. Define $\left(W_i^1\right)_{i=1}^D$ by:

$$W_i^1=\left\{\begin{array}{cc}
W_k^0\setminus \{w\} & \mbox{ if } i =k,\\
W_m^0 \cup \{w\}& \mbox{ if } i=m,\\
W_i^0& \mbox{ else. }
\end{array}\right.$$

{\bf Case 2:}
If there is {\it no} such element $w \in W_k^0$ which has disjoint support from every element in $W_m^0$, then partition $W_k^0 \cup W_m^0$ into maximal chains, say $C_1,\dots, C_r$. Note that for each $i=1,\dots,r$, the cardinality of the sets $C_i \cap W_k^0$ and $C_i \cap W_m^0$ differ by at most one, since, given $v_k\in W_k^0$ and $v_m\in W_m^0$, we know that $v_k$ and $v_m$ either have disjoint support, or their support sets have intersection of size one. Since $|W_m^0|+2\leq |W_k^0|$ then there is a maximal chain $C_j$ that contains one element more from $W_k^0$ than from $W_m^0$. Define $\left(W_i^1\right)_{i=1}^D$ by:

$$W_i^1=\left\{\begin{array}{cc}
\left(W_k^0 \cup C_j\right) \setminus \left(W_k^0 \cap C_j\right) & \mbox{ if } i =k,\\
\left(W_m^0 \cup C_j\right) \setminus \left(W_m^0 \cap C_j\right) & \mbox{ if } i=m,\\
W_i^0& \mbox{ else. }
\end{array}\right.$$

In both of the above cases, we have defined $\left(W_i^1\right)_{i=1}^D$ such that \[\sum_{i=1}^D ||W_i^1|-d_i| < \sum_{i=1}^D||W_i^0|-d_i|.\]

Note that $\left(W_i^1\right)_{i=1}^D$ satisfies the majorization condition $\left(|W_i^1|\right)_{i=1}^D \succeq \left(d_n\right)_{n=1}^N$. Thus if the sets of $\left(W_i^1\right)_{i=1}^D$ do not span the desired fusion frame, we can repeat the above procedure with $\left(W_i^1\right)_{i=1}^D$ instead of $\left(W_i^0\right)_{i=1}^D$ and get $\left(W_i^2\right)_{i=1}^D$ such that $\sum_{i=1}^D||W_i^2|-d_i|<\sum_{i=1}^D||W_i^1|-d_i|$. Continuing in this fashion we will, say after repeating the process $l$ times, arrive at $\left(W_i^l\right)_{i=1}^D$ such that $\sum_{i=1}^D||W_i^l|-d_i|=0$, then the sets of $\left(W_i^l\right)_{i=1}^D$ span the desired fusion frame $\left(W_n\right)_{n=1}^D$.

\end{proof}

In Table 5 we provide an easily implementable algorithm to construct such a unit-weighted fusion frame as described in Theorem \ref{thm4.4}.

\begin{table}[ht]
\centering
\framebox{
\begin{minipage}[ht]{4.8in}
\vspace*{0.3cm}
{\sc \underline{UFF: Unit-Weighted Fusion Frame Spectral Tetris Construction}}

\vspace*{0.4cm}

\noindent {\bf Parameters:}
\begin{itemize}
\item Dimension $M \in \mathbb{N}$.

\item Number of frame elements $N \in \mathbb{N}$.

\item Eigenvalues $\left(\lambda_m\right)_{m=1}^M \subseteq $ (0, $\infty$) and dimensions $M> d_1\geq d_2 \geq \dots \geq d_D>0$ such that $\sum_{m=1}^M\lambda_m= \sum_{i=1}^Dd_i=N$.

\item Reference fusion frame $\left(V_i\right)_{i=1}^t$ for $\left(\lambda_m\right)_{m=1}^M$ such that $\left( \mbox{dim }V_i\right)_{i=1}^t \succeq \left(d_i \right)_{i=1}^D$.
\end{itemize}

\noindent {\bf Algorithm:}
\begin{enumerate}
\item Set $\ell :=0$
\item Set $W_i^{\ell} :=S_i$ for $0< i \leq t$ and $W_i^{\ell}:=\emptyset$ for $t< i \leq D$, do
\item Repeat
				\begin{enumerate}
				\item If $\sum_{i=1}^D||W_i^{\ell}|-d_i| \neq 0$
								\begin{enumerate}
								\item Set $m=\mbox{ max}\{j\leq D| d_j \neq |W_j^{\ell}|\}$
								\item Set $k=\mbox{ max}\{j<m| |W_j^{\ell}|>d_j\}$
								\item If $A= \{x \in W_k^{\ell}| \mbox{ supp}\left(x\right) \cap \mbox{ supp}\left(v\right) =\emptyset \mbox{ for all } v \in W_m^{\ell}\}\neq \emptyset$, then
												\begin{enumerate}
												\item Pick one $\hat{x} \in A$
												\item $W_k^{\ell +1} := W_k^{\ell} \setminus \{\hat{x}\}$
												\item $W_m^{\ell +1} := W_m^{\ell} \cup \{\hat{x}\}$
												\item $W_i^{\ell +1} :=W_i^{\ell} \mbox{ for all } i \neq k,m$
												\end{enumerate}
								\item else
												\begin{enumerate}
												\item Partition $W_k^{\ell} \cup W_m^{\ell}$ into maximal chains
												\item Pick one such maximal chain, $C_j$, which contains one more element from $W_k^{\ell}$ than from $W_m^{\ell}$
												\item $W_k^{\ell+1} := \left(W_k^{\ell} \cup C_j\right) \setminus \left( W_k^{\ell} \cap C_j \right)$
												\item $W_m^{\ell+1} := \left(W_m^{\ell} \cup C_j\right) \setminus \left( W_m^{\ell} \cap C_j \right)$
												\item $W_i^{\ell+1} := W_i^{\ell} \mbox{ for all } i \neq k,m$
												\end{enumerate}
								\item Set $\ell:=\ell+1$
								\end{enumerate}
				\item end.
				\end{enumerate}
\item Do until $\sum_{i=1}^D ||W_i^{\ell}|-d_i|=0$
\item end.
\end{enumerate}

\noindent {\bf Output:}
\begin{itemize}
\item The sets $\left(W_i^{\ell}\right)_{i=1}^D$ span the desired fusion frame $\left(W_i\right)_{i=1}^D$, where $W_i=\mbox{ span}\left(W_i^{\ell}\right)$ for all $i = 1,\dots, D$.
\end{itemize}
\end{minipage}
}
\vspace*{0.2cm}
\caption{The UFF algorithm for constructing a unit-weighted fusion frame.}
\label{uffalg}
\end{table}

We will now present an illustrative example to construct a unit weighted fusion frame using UFF. We will extend Example \ref{stcex} and Example \ref{rffex} to construct the corresponding unit-weighted fusion frame via UFF.

\begin{example}
We will construct a unit-weighted fusion frame in $\mathcal{H}_4$ with 11 frame elements, eigenvalues $\left(\frac{11}{4},\frac{11}{4},\frac{11}{4},\frac{11}{4}\right)$ and dimensions $ 3 \geq 3 \geq 2 \geq 1 \geq 1 \geq 1 $. 

Notice that $\sum_{m=1}^4\lambda_m=11= \sum_{i=1}^6d_i=N$.

The reference fusion frame this constructs is $\left(V_i\right)_{i=1}^5$ defined by the span of the sets: \[S_1=\{f_1,f_5,f_8,f_{11}\}, S_2=\{f_2,f_6\},\] \[ S_3= \{f_3,f_9\}, S_4=\{f_4,f_{10}\}, S_5=\{f_7\},\]
as seen in Example \ref{rffex}. Note that the majorization condition, $\left( \mbox{dim }V_i\right)_{i=1}^5 \succeq \left(d_i \right)_{i=1}^6$, is also satisfied.
\begin{itemize}
\item Set $\ell :=0$
\item Set $W_i^{0}:=\emptyset$ for $5< i \leq 6$. Hence we have the following sets:
				\[ W_1^0:=\{f_1,f_5,f_8,f_{11}\}; W_2^0:=\{f_2,f_6\}; W_3^0:=\{f_3,f_9\};\]
				\[ W_4^0:=\{f_4,f_{10}\}; W_5^0:=\{f_7\}; W_6^0:=\emptyset .\]
\item $\sum_{i=1}^6||W_i^{0}|-d_i|=4 \neq 0$
				\begin{enumerate}
								\item Set $m:=\mbox{ max}\{j\leq 6|d_j \neq |W_j^{0}|\}=6$
								\item Set $k:=\mbox{ max}\{j<6| |w_j^{0}|>d_j\}=4$
								\item Is $A= \{x \in W_4^{0}| \mbox{ supp}\left(x\right) \cap \mbox{supp}\left(v\right)=\emptyset \mbox{ for all } v \in W_6^{0}\}\neq\emptyset$? Yes! This is clear since $W_6^0=\emptyset$ then all of $W_4 \in A$. Then
												\begin{enumerate}
												\item Pick one $\hat{x} \in A$. We can pick $f_{10}$.
												\item Now we have the following new subspaces: 
															\[ W_1^1:=W_1^0=\{f_1,f_5,f_8,f_{11}\}; W_2^1:=W_2^0=\{f_2,f_6\};\] \[ W_3^1:W_3^0=\{f_3,f_9\}; W_4^1: W_4^{0} \setminus \{f_{10}\}=\{f_4\};\]\[ W_5^1:=W_5^0=\{f_7\}; W_6^1:= W_6^{0} \cup \{f_{10}\}=\{f_{10}\}.\]
												\end{enumerate}
								\item Set $\ell:=0+1=1$.
				\end{enumerate}
\item Repeat with $\ell=1$.	
\item $\sum_{i=1}^6||W_i^{1}|-d_i|=2 \neq 0$
				\begin{enumerate}
								\item Set $m:=\mbox{ max}\{j\leq 6|d_j \neq |W_j^{1}|\}=2$.
								\item Set $k:=\mbox{ max}\{j<2| |w_j^{1}|>d_j\}=1$.
								\item We have $A= \{x \in W_1^{1}| \mbox{ supp}\left(x\right) \cap \mbox{supp}\left(v\right)=\emptyset \mbox{ for all } v \in W_2^{1}\}=\{f_{11}\} \neq \emptyset$.
								\begin{enumerate}
												\item Pick one $\hat{x} \in A$. We can pick $f_{11}$.
												\item Now we have the following new subspaces: 
															\[ W_1^2:=W_1^1\setminus \{f_{11}\}=\{f_1,f_5,f_8\};\] \[W_2^2:=W_2^1 \cup \{f_{11}\}=\{f_2,f_6,f_{11}\};\] \[ W_3^2:W_3^1=\{f_3,f_9\}; W_4^2: W_4^{1}=\{f_4\};\]\[ W_5^2:=W_5^1=\{f_7\}; W_6^2:= W_6^{1}=\{f_{10}\}.\]
												\end{enumerate}
								
								\item Set $\ell:=1+1=2$
				\end{enumerate}
\item Repeat with $\ell=2$.
\item $\sum_{i=1}^6||W_i^{2}|-d_i|=0$
\item end.
\end{itemize}

{\bf Output:}
\begin{itemize}
\item The sets $\left(W_i^{2}\right)_{i=1}^6$ span the desired fusion frame $\left(W_i\right)_{i=1}^D$, where $W_i=\mbox{ span}\left(W_i^{2}\right)$ for all $i = 1,\dots, 6$.
\end{itemize}
\end{example}

The UFF algorithm and Theorem \ref{thm4.4} are useful and easily implementable when constructing unit weighted fusion frames with prescribed dimensions and prescribed positive spectrum. Note that these fusion frames are also 2-sparse like the fusion frames in the previous section were. However, Theorem \ref{thm4.4} only provides sufficient conditions for when UFF constructs a unit weighted fusion frame. However, if we consider the special case of unit weighted tight fusion frames then \cite{CFHWZ} provides necessary and sufficient conditions for when UFF can be applied. We will see that the majorization condition $\left(\mbox{dim }V_i\right)_{i=1}^t \succeq \left(d_i \right)_{i=1}^D$ in Theorem \ref{thm4.4} is also necessary for unit weighted tight fusion frames; however, we need the further requirement that $N\geq 2M$ for this to hold.

\begin{theorem}\cite{CFHWZ}\label{tightff}
Let $N \geq 2M$ be positive integers and $\{d_i\}_{i=1}^D \subseteq \mathbb{N}$ in non-increasing order such that $\sum_{i=1}^Dd_i=N$. Let $\left(V_i\right)_{i=1}^t$ be the reference fusion frame for $\{\lambda_m\}_{m=1}^M =\{\frac{N}{M},\cdots,\frac{N}{M}\}$. Then there exists a unit weighted tight Spectral Tetris fusion frame $\left(W_i\right)_{i=1}^D$ for $\mathcal{H}_M$ with dim$W_i=d_i$ for $i=1,\dots,D,$ if and only if $\left(\mbox{dim }V_i\right)_{i=1}^t \succeq \left(d_i \right)_{i=1}^D$.
\end{theorem}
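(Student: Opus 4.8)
The plan is to treat the two implications separately: the ``if'' direction is an immediate specialization of an earlier theorem, while the ``only if'' direction requires a structural analysis of the Spectral Tetris frame underlying the fusion frame.

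For sufficiency I would simply apply Theorem~\ref{thm4.4} to the constant eigenvalue sequence $\lambda_m=\frac{N}{M}$, $m=1,\dots,M$. We have $\sum_{m=1}^M\lambda_m=N=\sum_{i=1}^D d_i$, and by hypothesis $(\dim V_i)_{i=1}^t\succeq(d_i)_{i=1}^D$ for the reference fusion frame of $(\lambda_m)_{m=1}^M$, so Theorem~\ref{thm4.4} produces a Spectral Tetris fusion frame $(W_i)_{i=1}^D$ for $\mathcal{H}_M$ with $\dim W_i=d_i$ and with all eigenvalues equal to $\frac{N}{M}$; that is, a unit weighted tight Spectral Tetris fusion frame. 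Note that $N\ge 2M$ is not used here; it enters only in the converse.

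For necessity, suppose $(W_i)_{i=1}^D$ is a unit weighted tight Spectral Tetris fusion frame with $\dim W_i=d_i$. By Definition~\ref{stffdef} there is a Spectral Tetris frame $F=\{f_{i,j}\}_{i=1,j=1}^{D,d_i}$, built by SFR/PNSTC, with $\{f_{i,j}\}_{j=1}^{d_i}$ an orthonormal basis of $W_i$; by the frame operator/fusion frame operator identity recalled in Section~\ref{ffsection}, $F$ is a unit norm tight frame for $\mathcal{H}_M$ with eigenvalue $\frac{N}{M}$, and $\frac{N}{M}\ge 2$ since $N\ge 2M$. The first key point is that for a \emph{constant} spectrum $\ge 2$ the Spectral Tetris algorithm makes no choices: at each step the remaining weight in the active row is either $\ge 1$, forcing a unit coordinate vector, or lies in $(0,1)$, forcing the $2\times 2$ block $A(x)$ with that value of $x$; the only residual freedom is the sign pattern inside the blocks, which affects neither supports nor orthogonality. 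Hence $F$ is, up to such signs, exactly $STF(N;\frac{N}{M},\dots,\frac{N}{M})$ --- the frame from which RFF builds the reference fusion frame $(V_i)_{i=1}^t$. The second key point is a structural lemma: in this frame two vectors are orthogonal if and only if their supports are disjoint. This is a short case check (two coordinate columns $e_m,e_{m'}$; a column $e_m$ versus a column of a block on rows $\{m',m'+1\}$; two block columns), using that every block $A(x)$ occurring here has $x\in(0,1)$, so its two columns have inner product $x-1\ne 0$, and two overlapping blocks share exactly one row carrying nonzero entries of both. Consequently each orthonormal basis $\{f_{i,j}\}_j$ of $W_i$ consists of columns of $F$ with pairwise disjoint supports, so $d_i$ is the cardinality of that set; in other words $(d_i)_{i=1}^D$ is the class-size sequence of some partition of the columns of $F$ into classes of pairwise disjoint support.

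It then remains to show that $(\dim V_i)_{i=1}^t$ majorizes the class-size sequence of \emph{every} partition of the columns of $F$ into pairwise-disjoint-support classes; this is the heart of the matter and the step I expect to be the main obstacle. I would identify the ``conflict graph'' on the columns of $F$ --- vertices the columns, edges when supports meet --- as an interval graph: within each Spectral Tetris block-decomposition block the column supports, read left to right, are intervals of one or two consecutive rows that slide monotonically upward. A disjoint-support partition is then a proper colouring, and RFF is precisely first-fit colouring in the left-to-right order, which on this staircase extracts at each stage a maximum independent set of the not-yet-coloured columns (for instance $S_1$ collects the first unit vector of every row, so $\dim V_1=M$, the largest an independent set can be). Thus the claim reduces to: for each $k$, the union of the $k$ largest reference subspaces contains as many columns as \emph{any} $k$ independent sets jointly can. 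Since the conflict graph is perfect, after deleting a minimum hitting set of the $(k+1)$-cliques the chromatic number drops to $k$; matching this optimum against first-fit by an exchange argument --- any $k$ classes of another partition covering strictly more columns would, after swapping vectors to conform to the left-to-right greedy, enclose a $(k+1)$-clique, a contradiction --- gives the extremal property. Combining it with the preceding paragraph yields $(\dim V_i)_{i=1}^t\succeq(d_i)_{i=1}^D$, completing the proof.
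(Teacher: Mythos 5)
The paper does not reproduce a proof of Theorem~\ref{tightff} (it is quoted from \cite{CFHWZ}), so I can only judge your argument on its own terms. Your sufficiency direction is fine: specializing Theorem~\ref{thm4.4} to the constant sequence $\lambda_m=\frac{N}{M}$ is exactly the right move, and you are correct that $N\ge 2M$ plays no role there. The first two steps of your necessity argument are also sound: because the fusion frame is unit weighted and tight, the generating Spectral Tetris frame is forced to be the unit norm frame with constant spectrum $\frac{N}{M}\ge 2$, which the algorithm produces deterministically (up to signs), and your case check that two columns of this frame are orthogonal if and only if their supports are disjoint is correct, since every block $A(x)$ that occurs has $x\in(0,1)$. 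This correctly reduces necessity to the combinatorial claim that the sorted class sizes of the RFF partition majorize the class sizes of \emph{every} partition of the columns into pairwise-disjoint-support classes.

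That combinatorial claim, however, is the entire content of the ``only if'' direction, and your treatment of it is where the genuine gap lies. Two specific problems. First, your anchoring example is false: it is not true that $S_1$ contains a unit vector from every row, nor that $\dim V_1=M$. For instance with $M=3$, $\lambda=\frac{7}{3}$, row $2$ receives weight $\frac{5}{3}$ from the incoming block and the remaining $\frac{2}{3}<1$ is spent on an outgoing block, so row $2$ has no singleton column and $\dim V_1=2<M$. (The theorem survives because the maximum independent set there also has size $2$, but this shows your intuition for why first-fit is extremal is not the right one.) Second, the assertions that first-fit ``extracts at each stage a maximum independent set'' and that the $k$ largest first-fit classes realize the maximum $k$-colourable induced subgraph are exactly the kind of statements that fail for first-fit colourings of general interval graphs (a long interval processed first can block many short ones); they can only be true here because of the special staircase structure --- supports are intervals of length $1$ or $2$, at most one block per consecutive row pair, and the columns appear in a rigid order --- and none of that structure is actually used in your ``exchange argument,'' which is asserted rather than carried out. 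To close the gap you would need, for each $k$, a matching upper bound on the number of columns coverable by $k$ disjoint independent sets (e.g., via the incidence count $\sum_m\min(k,c_m)$, where $c_m$ is the number of columns supported on row $m$, together with the split into singleton and doubleton columns) and a verification that the $k$ largest RFF classes attain it; as written, the heart of the necessity direction is missing.
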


Tight fusion frames are ideal in numerous applications of distributed processing because they are robust against additive noise and erasures. Also the fusion frame operator of a tight fusion frame is ideal for reconstruction purposes because it is a sequence of orthogonal projection operators which sum to a scalar multiple of the identity operator. Moreover, tight fusion frames are maximally robust against the loss of a single projection precisely when the tight fusion frame's projection operators are equidimensional, which is exactly the type of fusion frame Theorem \ref{tightff} constructs. Hence, the complete characterization of unit weighted tight fusion frames in Theorem \ref{tightff} is beneficial because this way researchers will know exactly when and how UFF can construct the tight fusion frames needed for their research. 

\section{Generalized Spectral Tetris Fusion Frame Constructions}\label{gffsec}

Given a spectrum for a desired fusion frame operator and dimensions for the subspaces we have seen, in Section \ref{pnstcff}, an easily implementable construction technique used to construct a unit weighted fusion frame with these properties. However, not all fusion frames are unit-weighted and so we would like a similar construction technique which allows for a fusion frame with prescribed weights. In \cite{CP}, they developed the first construction method for fusion frames with prescribed weights through an adapted version of Spectral Tetris. Moreover, they provide necessary and sufficient conditions for when a desired fusion frame can be constructed using Spectral Tetris. Hence they completely characterize Spectral Tetris fusion frames. This was quite an accomplishment considering the vast amount of literature on fusion frames and their need in application. 

In \cite{CP} they generalize PNSTC in such a way to construct Spectral Tetris fusion frames. However, due to the fact that PNSTC outputs conventional frames, we need a connection between these frames and fusion frames. From previous discussions in Section \ref{ffsec}, we have seen that a fusion frame $\{\left(W_i,w_i\right)\}_{i=1}^D$, with frame operator $\widetilde{S}$, arises from a conventional frame when we look at orthonormal bases $\left(\psi_{i,j}\right)_{j=1}^{d_i}$ of the fusion frame subspaces $W_i$. If we further assume that all subspaces have weight one, i.e. $w_i=1$ for all $i=\{1,\dots,D\}$, then $\{\psi_{i,j}\}_{i=1,j=1}^{D,d_i}$ is a frame with unit-norm vectors and has frame operator $S= \widetilde{S}$. This relationship lead to the definition of a {\it Spectral Tetris fusion frame} as defined in Definition \ref{stffdef}. 

However, all of the Spectral Tetris fusion frames constructed thus far have been unit weighted, and as such, in order to construct arbitrarily weighted Spectral Tetris fusion frames then this definition and connection between a fusion frame and a conventional frame needs to be generalized to be of use. In fact, we need to look at a tight frame for each subspace of a fusion frame instead of an orthonormal basis for each subspace, in order to identify a non-unit weighted fusion frame with a conventional frame. 

For a fusion frame $\{\left(W_i,w_i\right)\}_{i=1}^D$ in $\mathcal{H}_M$, recall our fusion frame operator $\widetilde{S}x=\sum_{i=1}^Dw_i^2\left(P_i\left(x \right) \right)$ for any $x\in \mathcal{H}_M$. Let $\{f_{i,j}\}_{i=1}^{d_i}$ be a tight frame for $W_i$ with frame operator $S$ and let $P_i$ be the orthogonal projection onto $W_i$. Then the fusion frame operator becomes:
\[\widetilde{S}x=\sum_{i=1}^Dw_i^2\left(P_i\left(x \right) \right)=\sum_{i=1}^D\sum_{j=1}^{d_i}\langle P_i\left(x \right), f_{i,j}\rangle f_{i,j}=\sum_{i=1}^D\sum_{j=1}^{d_i}\langle x, f_{i,j}\rangle f_{i,j}=Sx.\]
Hence, a non-unit weighted fusion frame arises from a conventional frame by identifying a tight frame for each subspace of the fusion frame. This is stated formally in the following theorem.

\begin{theorem}\label{fffthm}
For $i\in \{1,\dots,D\}$, let $w_i>0$, $W_i$ be a subspace of $\mathcal{H}_M$ and $\{f_{i,j}\}_{j=1}^{d_i}$ be a tight frame for $W_i$ with tight frame bounds $w_i^2$. Then the following are equivalent.
\begin{enumerate}
\item $\{\left(W_i,w_i\right)\}_{i=1}^D$ is a fusion frame whose fusion frame operator has spectrum $\{\lambda_m\}_{m=1}^M$.
\item $\{f_{i,j}\}_{i=1,j=1}^{D,d_i}$ is a frame whose frame operator has spectrum $\{\lambda_m\}_{m=1}^M$.
\end{enumerate}
\end{theorem}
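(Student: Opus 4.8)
The plan is to reduce everything to a single operator identity, namely that the frame operator $S$ of the concatenated system $\{f_{i,j}\}_{i=1,j=1}^{D,d_i}$ equals the fusion frame operator $\widetilde{S}$ of $\{(W_i,w_i)\}_{i=1}^D$. This is essentially the computation displayed immediately before the theorem statement, but I would organize it into two pieces so that both directions of the equivalence and the claim about the spectrum fall out at once. In the finite-dimensional setting a family is a (fusion) frame exactly when its (fusion) frame operator is invertible, equivalently positive definite; so once $S=\widetilde{S}$ is known, (1) holds if and only if $\widetilde{S}$ is invertible if and only if $S$ is invertible if and only if (2) holds, and the common operator then has the common spectrum $\{\lambda_m\}_{m=1}^M$.

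The only real computation is a lemma-sized step: for each fixed $i$, the frame operator of $\{f_{i,j}\}_{j=1}^{d_i}$, regarded as a family of vectors in all of $\mathcal{H}_M$, is $w_i^2 P_i$. First I would use that each $f_{i,j}$ lies in $W_i$, so that $\langle x,f_{i,j}\rangle=\langle P_i x,f_{i,j}\rangle$ for every $x\in\mathcal{H}_M$. Then, since $\{f_{i,j}\}_{j=1}^{d_i}$ is a tight frame for $W_i$ with tight frame bound $w_i^2$, its frame operator on $W_i$ is $w_i^2$ times the identity operator on $W_i$, i.e. $\sum_{j=1}^{d_i}\langle y,f_{i,j}\rangle f_{i,j}=w_i^2 y$ for all $y\in W_i$. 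Applying this with $y=P_i x$ gives
\[
\sum_{j=1}^{d_i}\langle x,f_{i,j}\rangle f_{i,j}=\sum_{j=1}^{d_i}\langle P_i x,f_{i,j}\rangle f_{i,j}=w_i^2 P_i x \qquad \mbox{for all } x\in\mathcal{H}_M.
\]

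Summing this over $i=1,\dots,D$ then yields
\[
Sx=\sum_{i=1}^{D}\sum_{j=1}^{d_i}\langle x,f_{i,j}\rangle f_{i,j}=\sum_{i=1}^{D}w_i^2 P_i x=\widetilde{S}x \qquad \mbox{for all } x\in\mathcal{H}_M,
\]
so $S=\widetilde{S}$ as operators on $\mathcal{H}_M$; combined with the first paragraph this finishes the argument. I do not expect a genuine obstacle here. The one point that needs care is the passage from ``tight frame for the subspace $W_i$'' to ``frame operator $w_i^2 P_i$ on all of $\mathcal{H}_M$'' — that is, remembering that the frame coefficients only see the component $P_i x$ — and, on the soft side, recalling that finite-dimensionality makes the upper (fusion) frame bound automatic, so that in each direction the only thing left to verify is invertibility of $S=\widetilde{S}$, equivalently that $\{W_i\}_{i=1}^D$, respectively $\{f_{i,j}\}$, spans $\mathcal{H}_M$.
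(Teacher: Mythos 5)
Your proposal is correct and matches the paper's argument: the paper establishes the theorem via exactly the same operator identity $\widetilde{S}x=\sum_i w_i^2 P_i x=\sum_i\sum_j\langle P_i x,f_{i,j}\rangle f_{i,j}=\sum_i\sum_j\langle x,f_{i,j}\rangle f_{i,j}=Sx$, displayed just before the theorem statement. Your version merely isolates the per-subspace step $\sum_j\langle x,f_{i,j}\rangle f_{i,j}=w_i^2P_ix$ as a lemma and makes explicit the (correct) soft point that in finite dimensions both frame conditions reduce to invertibility of the common operator.
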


Due to this relationship, to construct arbitrarily weighted fusion frames via Spectral Tetris, we will first construct a Spectral Tetris frame and then partition it in such a way so that the corresponding partition is a tight frame for each subspace of the fusion frame.

\begin{definition}
 Suppose $\{\left(W_i,w_i\right)\}_{i=1}^D$ is a fusion frame with fusion frame operator $\widetilde{S}$. We say $\{\left(W_i,w_i\right)\}_{i=1}^D$ is a {\it Spectral Tetris fusion frame} if there exists a Spectral Tetris frame $F = \{f_n\}_{n=1}^N$ with frame operator $S$ and a partition $\{J_i\}_{i=1}^D$ of $\{1,\dots,N\}$ such that $\{f_n\}_{n\in J_i}$ is a tight frame for $W_i$ with tight frame bound $w_i^2$. Further, we say $F$ and $\{J_i\}_{i=1}^D$ generate $\{\left(W_i,w_i\right)\}_{i=1}^D$.
\end{definition}

Since every fusion frame arises from a partition of a traditional frame, we introduce additional notation to easily identify subfamilies of frame vectors. Given a frame $F = \{f_n\}_{n=1}^N$ and a subset $J \subseteq \{1,\dots,N\}$, we denote the subfamily $F_J:=\{f_n| n \in J\}$. Since $F_J$ is a frame for its span, we again do not distinguish this set from its induced synthesis matrix.

We would like to construct a fusion frame which has a desired sequence of eigenvalues and desired subspace weights and dimensions. However, we make no mention of the norms of the vectors which span the subspaces of our fusion frame. In fact, although the eigenvalues, weights and dimensions are fixed for our fusion frame, the vector norms can vary. Moreover, it is possible for different sequences of norms to produce the same fusion frame. We illustrate this in the following example:

\begin{example} \cite{CP}
Consider $\mathcal{H}_2$ and a sequence of weights $\{\sqrt{2},1\}$ with corresponding subspace dimensions $\{2, 1\}$. Also, suppose we want the fusion frame operator to have eigenvalues $\{2, 3\}$. We use PNSTC to produce a variety of frames whose frame operator has this spectrum:
\begin{enumerate}
\item The sequence of norms $\{\sqrt{2},\sqrt{2},1\}$ produces the frame
$$\left[\begin{array}{ccc}
f_1&f_2&f_3
\end{array}\right]
=
\left[\begin{array}{ccc}
\sqrt{2}&0&0\\
0&\sqrt{2}&1
\end{array}\right].$$

\item The sequence of norms $\left(1,1,\sqrt{2},1\right)$ produces the frame

$$\left[\begin{array}{cccc}
g_1&g_2&g_3&g_4
\end{array}\right]
=
\left[\begin{array}{cccc}
1&1&0&0\\
0&0&\sqrt{2}&1
\end{array}\right].$$

\item The sequence of norms $\left( 1, \sqrt{\frac{3}{2}},\sqrt{\frac{3}{2}},1\right)$ produces the frame

$$\left[\begin{array}{cccc}
h_1&h_2&h_3&h_4
\end{array}\right]
=
\left[\begin{array}{cccc}
1&\sqrt{\frac{1}{2}}&\sqrt{\frac{1}{2}}&0\\
0&1&-1&1
\end{array}\right].$$
\end{enumerate}

A fusion frame $\{\left(W_i,w_i\right)\}_{i=1}^2,$ with weights $w_1=\sqrt{2}, w_2=1,$ is then obtained via PNSTC by defining $W_1=$span$\left(f_1,f_2\right)$, $W_2=$span$\left(f_3\right)$, or $W_1=$span$\left(g_1,g_2,g_3\right)$, $W_2=$span$\left(g_4\right)$, or $W_1=$span$\left(h_1,h_2,h_3\right)$, $W_1=$span$\left(h_4\right)$. All three generate the same fusion frame.
\end{example}

The differences among the constructions in this example are superficial; (2) simply splits a vector from (1) into two colinear vectors, and (3) takes two orthogonal vectors from (2) and combines them into a $2 \times 2$ block spanning the same 2-dimensional space. In fact, all Spectral Tetris frames which generate a given fusion frame are related in this manner. We state this more formally in the following theorem.

\begin{theorem}\cite{CP}\label{partitionthm}
If $\{\left(W_i, w_i\right)\}_{i=1}^D$ is a spectral tetris fusion frame in $\mathcal{H}_M$, then there exists a spectral tetris frame $F=\{f_n\}_{n=1}^N$ and a partition $\{J_i\}_{i=1}^D$ of $\{1,\dots,N\}$ generating this fusion frame such that $\|f_n\|=w_i$ and $\langle f_n,f_{n'}\rangle=0$, for $n,n' \in J_i$ for each $i \in \{1,\dots,D\}$.
\end{theorem}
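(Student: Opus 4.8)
\emph{Setup and reduction to a local problem.} The plan is to start from the data that is promised by the hypothesis: a Spectral Tetris frame $F=\{f_n\}_{n=1}^N$ and a partition $\{J_i\}_{i=1}^D$ of $\{1,\dots,N\}$ generating $\{(W_i,w_i)\}_{i=1}^D$, so that each $F_{J_i}=\{f_n\}_{n\in J_i}$ is a $w_i^2$-tight frame for $W_i$. I would first record the structural facts about $F$ established earlier: every $f_n$ is supported on at most two consecutive standard coordinates, the synthesis matrix sits in ``staircase'' form, and the vectors enter either singly as $a_ne_m$ or as the two columns of a building block $\hat A(x)$ (equation~(\ref{eqn:a}) in the unit-norm case), in which case the two columns fail to be orthogonal precisely when the block is \emph{genuine}, i.e.\ does not degenerate into two coordinate vectors. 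Next I would invoke Theorem~\ref{fffthm}: the fusion frame one obtains, together with its spectrum, depends only on the choice of \emph{some} tight frame with bound $w_i^2$ for each $W_i$. Consequently it suffices to exhibit, for each $i$, an orthogonal family $u_{i,1},\dots,u_{i,d_i}$ spanning $W_i$ with $\|u_{i,j}\|=w_i$ (equivalently, a scaled orthonormal basis of $W_i$), supported in a staircase-compatible way, and then to check that the union of these families, correctly ordered, is the output of a PNSTC run.

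\emph{The good basis of each $W_i$.} Because $S_i:=\sum_{n\in J_i} f_n f_n^{*}=w_i^{2}P_i$ and because the only vectors of $F$ that couple two consecutive coordinates $m,m+1$ are the (at most one) genuine $\hat A$-block sitting across rows $m$ and $m+1$, one shows that $W_i$ decomposes orthogonally into pieces of three kinds: a coordinate line $\operatorname{span}\{e_m\}$; a full coordinate plane $\operatorname{span}\{e_m,e_{m+1}\}$ (this happens exactly when a genuine $\hat A$-block lies entirely inside $J_i$, since a genuine block already spans that plane); and a one-dimensional non-coordinate line $\operatorname{span}\{\alpha e_m+\beta e_{m+1}\}$, which occurs when exactly one column of a genuine block lies in $J_i$. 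On a line of the third type the single block-column in $J_i$ already has norm $w_i$ and is, up to sign, the only admissible choice, so nothing is done. On a coordinate line, replace all the (colinear) vectors of $J_i$ supported there by the single vector $w_i e_m$. On a full coordinate plane, replace the vectors of $J_i$ supported there by the pair $\{w_i e_m,\,w_i e_{m+1}\}$; this preserves the span and, since $P_i$ restricts to the identity on that plane, it preserves $S_i$ restricted to the plane as $w_i^{2}I_2$. Doing this on every piece produces the desired orthogonal, norm-$w_i$ spanning set $F'_{J'_i}$ of $W_i$, and $F'_{J'_i}$ is again a $w_i^{2}$-tight frame for $W_i$.

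\emph{Reassembly, and the main obstacle.} Finally I would order $F':=\bigcup_i F'_{J'_i}$ so that the coordinates are processed in increasing order, with each coordinate-plane block straddling the two corresponding rows, exactly mimicking a staircase. Since for every coordinate $m$ the total squared mass contributed in row $m$ is unchanged — it equals $\lambda_m$, because by Theorem~\ref{fffthm} we are still describing the same fusion frame with the same fusion frame operator — the resulting matrix has orthogonal rows with the prescribed row sums and columns of the prescribed norms, which is exactly what identifies it as the output of a PNSTC run; hence $F'$ is a Spectral Tetris frame, $\{J'_i\}$ is the induced partition, and $\{(W_i,w_i)\}_{i=1}^D$ is generated by $F'$ and $\{J'_i\}$ with $\|f'_n\|=w_i$ and $\langle f'_n,f'_{n'}\rangle=0$ for $n,n'\in J'_i$. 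The heart of the argument, and the step I expect to be the main obstacle, is the structural claim of the middle paragraph together with the consistency of the reassembly: one must verify that swapping a genuine $\hat A$-block for two scaled coordinate vectors does not break the staircase where the affected rows are shared with an \emph{adjacent} block, and that no subspace $W_i$ can be forced to use a basis that is not staircase-compatible. Both facts rest on the rigidity of the Spectral Tetris coupling pattern — that consecutive rows overlap in at most one $2\times 2$ block — but making this bookkeeping airtight is where the real work lies.
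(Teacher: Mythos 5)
Your strategy is the right one, and it is exactly the normalization the paper sketches in the discussion preceding the theorem (the paper itself defers the proof to the cited source): merge colinear vectors of a common $J_i$ into a single vector of norm $w_i$, and trade a genuine $2\times 2$ block whose two columns lie in the same $J_i$ for the two scaled coordinate vectors spanning the same plane. But two steps are asserted where they need proof, and one of them is false as written. The orthogonal decomposition of $W_i$ into coordinate lines, coordinate planes, and isolated slanted lines is the heart of the theorem, and ``one shows that'' does not discharge it. It does follow, but from the projection identity rather than from the support pattern alone: writing $S_i=\sum_{n\in J_i}f_nf_n^*=w_i^2P_i$ and using that at most one $2\times 2$ block straddles any pair of consecutive rows and that the off-diagonal contributions of the two columns of a block cancel, one sees that $\left(P_i\right)_{m,m+1}\neq 0$ exactly when $J_i$ contains one but not both columns of the block across rows $m,m+1$; computing the $(m,m+2)$ entry of $P_i^2=P_i$ (whose only possibly nonzero term is $\left(P_i\right)_{m,m+1}\left(P_i\right)_{m+1,m+2}$) forbids two such partial blocks on overlapping rows, and the resulting $2\times 2$ principal submatrix of $P_i$, being a projection with nonzero off-diagonal entry and hence of rank one, forces the lone block column to have norm exactly $w_i$ and to be the only vector of $J_i$ meeting those two coordinates. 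Some argument of this kind must appear; without it the ``three kinds of pieces'' claim is unsupported, and it is also what guarantees that the slanted lines are mutually orthogonal and orthogonal to the coordinate part.

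The second gap is the sentence ``orthogonal rows with the prescribed row sums and columns of the prescribed norms \ldots is exactly what identifies it as the output of a PNSTC run.'' That is not true: by Theorem \ref{thm2.3} such a matrix is merely \emph{a} frame with the right spectrum and norms, while a PNSTC output has in addition the specific greedy staircase form, and most matrices with orthogonal rows and the right row and column sums do not. What actually saves your reassembly is more particular: every genuine block surviving in $F'$ is a block of the original run, where it was inserted last in its row precisely because the remaining row weight $x$ had fallen below the squared norm of the next vector (so that $\hat A(x)$ of Lemma \ref{pnstclem} was called); your replacements change neither the row sums $\lambda_m$ nor the weight $x$ remaining when that block is reached, so the same greedy test is passed in the new run, the singleton insertions pass theirs trivially since positive weights summing to the remainder can be inserted in any order, and rows with no surviving block terminate with remaining weight exactly zero. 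Writing out that verification (which in particular exhibits the reordered norm sequence as Spectral Tetris ready in the sense of Theorem \ref{thm3.6}) is what is needed to conclude that $F'$ is genuinely a Spectral Tetris frame; once both points are supplied the proof is complete.
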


From Theorem \ref{partitionthm}, we see that every Spectral Tetris fusion frame can be generated by a Spectral Tetris frame, where each subspace of the fusion frame is spanned by equal norm, orthogonal frame vectors. Moreover, the weights of the subspaces of the Spectral Tetris fusion frame are the norms of the frame vectors from the Spectral Tetris frame. With this relationship in mind, we are now ready to give necessary and sufficient conditions for constructing fusion frames via Spectral Tetris.

\begin{theorem}\label{thm3.4}\cite{CP}
Let $\{w_i\}_{i=1}^D$ be a sequence of weights, $\{\lambda_m\}_{m=1}^M$ a sequence of eigenvalues, and $\{d_i\}_{i=1}^D$ a sequence of dimensions. Let $N = \sum_{i=1}^D d_i$, and now consider each $w_i$ repeated $d_i$ times. We will use a double index to reference specific weights and a single index to emphasize the ordering:
\[ \{w_{i,j}\}_{i=1,j=1}^{D,d_i} = \{w_n\}_{n=1}^N.\]
Then Spectral Tetris can construct a fusion frame whose subspaces have the given weights and dimensions, and whose frame operator has the given spectrum if and only if there exists a Spectral Tetris ready (as in Definition \ref{def3.4}) permutation of $\{w_n\}_{n=1}^N$ and $\{\lambda_m\}_{m=1}^M$, say $\{w_{\sigma n}\}_{n=1}^N$ and $\{\lambda_{\sigma ' m}\}_{m=1}^M$ whose associated partition $1 \leq n_1\leq \cdots \leq n_M=N$ satisfies:

\begin{enumerate} 
\item $\sum_{n=1}^{n_i}w_{\sigma n}^2 < \sum_{m=1}^i\lambda_{\sigma ' m}$, then
		\begin{enumerate}
		\item if $\sum_{n=1}^{n_{i+1}}w_{\sigma n}^2 < \sum_{m=1}^{i+1} \lambda_{\sigma ' m}$, then for $w_{u,v},w_{p,q} \in \{w_{\sigma n}\}_{n=n_i}^{n_{i+1}+1}, v \neq q$
		
		\item if $\sum_{n=1}^{n_{i+1}}w_{\sigma n}^2 = \sum_{m=1}^{i+1} \lambda_{\sigma ' m}$, then for $w_{u,v},w_{p,q} \in \{w_{\sigma n}\}_{n=n_i}^{n_{i+1}}, v \neq q$
		\end{enumerate}
\item  $\sum_{n=1}^{n_i}w_{\sigma n}^2 = \sum_{m=1}^i\lambda_{\sigma ' m}$, then
		\begin{enumerate}
		\item if $\sum_{n=1}^{n_{i+1}}w_{\sigma n}^2 < \sum_{m=1}^{i+1} \lambda_{\sigma ' m}$, then for $w_{u,v},w_{p,q} \in \{w_{\sigma n}\}_{n=n_i+1}^{n_{i+1}+1}, v \neq q$
		\item if $\sum_{n=1}^{n_{i+1}}w_{\sigma n}^2 = \sum_{m=1}^{i+1} \lambda_{\sigma ' m}$, then for $w_{u,v},w_{p,q} \in \{w_{\sigma n}\}_{n=n_i+1}^{n_{i+1}}, v \neq q$
		\end{enumerate}
\end{enumerate}

for all $i=1,\dots, M-1$.
\end{theorem}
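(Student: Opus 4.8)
The plan is to reduce Theorem~\ref{thm3.4} to the frame-level results already established and then to read off, directly from the PNSTC algorithm, exactly which pairs of constructed frame vectors are forced to be non-orthogonal. Form the sequence $\{w_n\}_{n=1}^N$ by listing each weight $w_i$ with multiplicity $d_i$, retaining the label $(i,j)$ on each entry. By Theorem~\ref{fffthm}, producing a fusion frame with the prescribed weights, dimensions and spectrum is equivalent to producing a frame $F=\{f_n\}_{n=1}^N$ whose frame operator has spectrum $\{\lambda_m\}$, together with a partition $\{J_i\}_{i=1}^D$ of $\{1,\dots,N\}$ with $|J_i|=d_i$ for which each $F_{J_i}$ is a tight frame for its span with bound $w_i^2$; and by Theorem~\ref{partitionthm} we may in addition require that $F$ be a Spectral Tetris frame with $\|f_n\|=w_i$ and $\langle f_n,f_{n'}\rangle=0$ for all $n,n'\in J_i$. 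Thus the statement reduces to the following: there is an ordering $\sigma$ of the labelled values $\{w_n^2\}$ and an ordering $\sigma'$ of $\{\lambda_m\}$ on which PNSTC may be run, and whose output is a frame in which, for each $i$, the positions carrying the label $i$ form a pairwise orthogonal family.

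Next I would determine the ``non-orthogonality graph'' of a PNSTC output from Table~\ref{pnstcalg}: $\langle f_n,f_{n'}\rangle\neq0$ can occur only when $f_n$ and $f_{n'}$ both have a nonzero entry in a common coordinate $e_m$, which happens in exactly two ways --- either both are greedy single-support columns $a_ne_m$ placed in row $m$ (these are colinear), or at least one of them is a column of a $2\times2$ building block $\hat{A}(x)$ straddling rows $m$ and $m+1$ (whose two columns are non-orthogonal unless the block degenerates into two single-support columns). Using the partition $0\le n_1<\dots<n_M=N$ furnished by Spectral Tetris readiness (Definition~\ref{def3.4}), one checks that a genuine straddle occurs at positions $n_i+1,n_i+2$ precisely when $\sum_{n\le n_i}w_{\sigma n}^2<\sum_{m\le i}\lambda_{\sigma'm}$, and that in that event the leftover weight is carried into row $i+1$. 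Hence the frame vectors meeting a given row form exactly one of the consecutive blocks of indices displayed in the four cases (1a)--(2b), the four cases recording whether a straddle enters that row from below and whether one leaves it from above. Each maximal clique of the non-orthogonality graph is thereby identified with one of the listed index ranges, and the condition imposed on the labels appearing in it says precisely that all frame vectors of that clique are assigned to distinct subspaces.

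With this structure in place both implications follow. For necessity, take a Spectral Tetris construction of the fusion frame together with the generating data $(F,\{J_i\})$ from Theorem~\ref{partitionthm}; the underlying orderings $\sigma,\sigma'$ are Spectral Tetris ready (PNSTC accepts only such input, cf.\ Theorem~\ref{thm3.6}), and since each $F_{J_i}$ is pairwise orthogonal, no two of its vectors can lie in a common clique of the non-orthogonality graph --- which is exactly conditions (1) and (2). For sufficiency, given $\sigma,\sigma'$ as in the statement, run PNSTC to obtain $F=\{f_n\}$ and set $J_i:=\{n:w_{\sigma n}\text{ carries the label }i\}$, so that $|J_i|=d_i$ and $\|f_n\|=w_i$ for $n\in J_i$; conditions (1) and (2) force distinct elements of $J_i$ to lie in distinct cliques, hence $F_{J_i}$ is an orthogonal family of norm-$w_i$ vectors, i.e.\ a $w_i^2$-tight frame for $W_i:=\operatorname{span}F_{J_i}$ with $\dim W_i=d_i$. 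Theorem~\ref{fffthm} then shows $\{(W_i,w_i)\}_{i=1}^D$ is a fusion frame whose fusion frame operator has the same spectrum as $F$, namely $\{\lambda_m\}$, as desired.

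The hard part will be the middle step: rigorously matching the maximal cliques of the non-orthogonality graph of a PNSTC output with the index ranges appearing in (1a)--(2b). The subtlety is the degenerate configuration in which a $2\times2$ building block $\hat{A}(x)$ decouples into two orthogonal single-support columns --- one of which drifts entirely into the next row --- so that the clique attached to a row is not simply the naive interval $\{n_i+1,\dots,n_{i+1}\}$ but must be corrected accordingly; one must verify that the Spectral Tetris ready partition already records this decoupling (through an equality rather than a strict inequality at the corresponding step), so that the stated conditions remain simultaneously necessary and sufficient. Pinning down these endpoints exactly, rather than up to a bounded error, is where the bookkeeping is delicate.
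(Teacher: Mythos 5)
This survey states Theorem~\ref{thm3.4} with a citation to \cite{CP} and gives no proof of it, so there is no in-paper argument to compare against; the comparison has to be with the argument in \cite{CP}. Your reduction is the right one and is the one actually used there: Theorem~\ref{fffthm} converts the problem into producing a PNSTC frame together with a partition $\{J_i\}$ into equal-norm orthogonal spanning sets, Theorem~\ref{partitionthm} guarantees that any Spectral Tetris fusion frame admits such a generating pair, and Theorem~\ref{thm3.6} supplies the ``Spectral Tetris ready'' part of the equivalence. Your identification of the non-orthogonality structure of a PNSTC output is also essentially correct: two output vectors fail to be orthogonal exactly when they share a supporting row, the columns of a nondegenerate block $\hat{A}(x,a_1,a_2)$ have inner product $\sqrt{(a_1^2-y)(x-a_1^2)}$ and so are non-orthogonal unless $a_1^2\in\{x,y\}$, and the set of vectors meeting a fixed row is a clique. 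Reading conditions (1a)--(2b) as ``all weights in the displayed range belong to distinct subspaces'' is the intended meaning (the literal ``$v\neq q$'' in the survey's transcription should be ``$u\neq p$'': it is the first, subspace, index that must differ).

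The genuine gap is that you explicitly defer the one step that constitutes the proof: the exact identification of the row-cliques with the four index ranges $\{n_i,\dots,n_{i+1}+1\}$, $\{n_i,\dots,n_{i+1}\}$, $\{n_i+1,\dots,n_{i+1}+1\}$, $\{n_i+1,\dots,n_{i+1}\}$ according to whether the partial sums at $n_i$ and $n_{i+1}$ are strict or equalities. Everything else in your argument (both implications, the tight-frame-per-subspace bookkeeping, the invocation of Theorems~\ref{fffthm} and~\ref{partitionthm}) is routine once that dictionary is established, and without it neither direction is actually proved --- necessity needs to know that two same-subspace vectors landing in one of those ranges really are non-orthogonal, and sufficiency needs to know that vectors outside a common range really are orthogonal. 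The degenerate case you flag, where $\lambda$'s residual equals $a_{n+1}^2$ so that $\hat{A}(x)$ decouples into two disjointly supported singletons (one of which lives entirely in the next row), is exactly where the naive clique-equals-interval claim can fail and where the strict-versus-equality dichotomy in the partial sums must be shown to do the correcting. Until that case analysis is written out and the endpoints are pinned down exactly, this is a correct plan rather than a proof.
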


Theorem \ref{thm3.4} gives necessary and sufficient conditions for the construction of a Spectral Tetris fusion frame. With that said, it is possible for a sequence of weights/norms and a sequence of eigenvalues to satisfy the Spectral Tetris ready condition but no partition of these sequences satisfies the orthogonality conditions (1)(a,b) and (2)(a,b) of Theorem \ref{thm3.4}. Thus there exists a Spectral Tetris frame but there cannot exist a Spectral Tetris fusion frame. However this does not suggest that such a fusion frame cannot exist, it just implies that Spectral Tetris cannot construct such a fusion frame. We illustrate this in the following example.

\begin{example} 
Given the dimensions $\{d_i\}_{i=1}^5 = \{4,2,2,2,1\}$ and the eigenvalues $\{\lambda_m\}_{m=1}^6=\{\frac{11}{6},\frac{11}{6},\frac{11}{6},\frac{11}{6},\frac{11}{6},\frac{11}{6}\}$, PNSTC will construct the following unit norm frame with these properties.
$$\left[\begin{array}{ccccccccccc}
1&\sqrt{\frac{5}{12}}&\sqrt{\frac{5}{12}}&0&0&0&0&0&0&0&0\\
0&\sqrt{\frac{7}{12}}&-\sqrt{\frac{7}{12}}&\sqrt{\frac{1}{3}}&\sqrt{\frac{1}{3}}&0&0&0&0&0&0\\
0&0&0&\sqrt{\frac{2}{3}}&-\sqrt{\frac{2}{3}}&\sqrt{\frac{1}{4}}&\sqrt{\frac{1}{4}}&0&0&0&0\\
0&0&0&0&0&\sqrt{\frac{3}{4}}&-\sqrt{\frac{3}{4}}&\sqrt{\frac{1}{6}}&\sqrt{\frac{1}{6}}&0&0\\
0&0&0&0&0&0&0&\sqrt{\frac{5}{6}}&-\sqrt{\frac{5}{6}}&\sqrt{\frac{1}{12}}&\sqrt{\frac{5}{12}}\\
0&0&0&0&0&0&0&0&0&\sqrt{\frac{11}{12}}&-\sqrt{\frac{11}{12}}
\end{array}\right]$$

A unit weighted fusion frame with these dimensions and spectrum is known to exist due to combinatorial arguments. However, the hypotheses of Theorem \ref{thm3.4} cannot be satisfied in this case because no four columns can be chosen to be pairwise orthogonal. Hence there is no Spectral Tetris fusion frame with these properties.
\end{example}

We see from Theorem \ref{thm3.4} that given a spectrum for a fusion frame operator and a sequence of weights repeated appropriately for subspace dimensions, if these sequences are Spectral Tetris ready then PNSTC can construct a Spectral Tetris frame. If these sequences further satisfy conditions (1)(a,b) and (2)(a,b), then Theorem \ref{thm3.4} guarantees that the Spectral Tetris frame can be grouped into orthogonal, equal norm spanning sets for the subspaces. However, to determine if Spectral Tetris can be used we need to not only check if our sequences are Spectral Tetris ready but also that this permutation satisfies conditions (1)(a,b) and (2)(a,b). In general, this can be a very time consuming and tedious task. To alleviate this work, in \cite{CP} they provide several special cases of Theorem \ref{thm3.4} in which an appropriate ordering may be found much more easily.

\begin{proposition}\label{prop4.1}\cite{CP}
Given a sequence of norms $\{a_n\}_{n=1}^N$ and a sequence of eigenvalues $\{\lambda_m\}_{m=1}^M$ where $\sum_{n=1}^N a_n^2 = \sum_{m=1}^M \lambda_m$, if
\[\mbox{max }_{i,j \in \{1,\dots,N\}} \left( a_i^2+a_j^2\right) \leq \mbox{ min }_{m \in \{1,\dots,M\}}\lambda_m,\]
then the sequences can be made Spectral Tetris ready by systematically switching adjacent weights.
\end{proposition}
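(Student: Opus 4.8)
The plan is to reduce the claim to the ``in particular'' part of Proposition \ref{prop345}. The first observation I would make is purely combinatorial: any finite sequence of reals can be put into non-decreasing order by a finite number of transpositions of \emph{adjacent} entries (this is exactly bubble sort), so ``systematically switching adjacent weights'' does let us replace $\{a_n\}_{n=1}^N$ and $\{\lambda_m\}_{m=1}^M$ by their non-decreasing rearrangements. Since the Spectral Tetris ready property of Definition \ref{def3.4} depends only on the underlying values together with the \emph{existence} of a suitable partition, it then suffices to show that the two monotone sequences are Spectral Tetris ready.

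So I would assume $a_1\le\cdots\le a_N$ and $\lambda_1\le\cdots\le\lambda_M$. Then $\min_m\lambda_m=\lambda_1$, while $\max_{i,j}(a_i^2+a_j^2)$ equals $a_{N-1}^2+a_N^2$ if the two indices are required to be distinct and $2a_N^2\ge a_{N-1}^2+a_N^2$ otherwise; in either reading the hypothesis yields
\[
a_N^2+a_{N-1}^2\;\le\;\lambda_1 .
\]
This is precisely the hypothesis of the final sentence of Proposition \ref{prop345}, so that proposition gives that $\{a_n\}_{n=1}^N$ and $\{\lambda_m\}_{m=1}^M$ are Spectral Tetris ready. (It is worth noting in passing that the hypothesis already forces $N\ge 2M$, since $\sum_n a_n^2=\sum_m\lambda_m\ge M\lambda_1\ge M(a_N^2+a_{N-1}^2)$ while $\sum_n a_n^2\le N a_N^2$; this matches the remark after Proposition \ref{prop345}.)

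If a self-contained argument is preferred instead, I would exhibit the partition directly: with both sequences sorted, set $n_0=0$ and let $n_k$ be the largest index with $\sum_{n=1}^{n_k}a_n^2\le\sum_{m=1}^k\lambda_m$. Because each $\lambda_m$ dominates any single $a_n^2$ (indeed $\lambda_m\ge a_N^2+a_{N-1}^2\ge a_n^2$), every block is non-empty and $0\le n_1<\cdots<n_M=N$, and condition (1) of Definition \ref{def3.4} holds by the maximality in the choice of $n_k$; the inequality $a_{n_k+2}^2\ge\sum_{m=1}^k\lambda_m-\sum_{n=1}^{n_k}a_n^2$ in clause (2) is then automatic from monotonicity, because $\sum_{m=1}^k\lambda_m-\sum_{n=1}^{n_k}a_n^2<a_{n_k+1}^2\le a_{n_k+2}^2$. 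The ``$n_{k+1}-n_k\ge 2$'' part of clause (2) is exactly where $\max_{i,j}(a_i^2+a_j^2)\le\min_m\lambda_m$ is used: in a carry situation $\sum_{n=1}^{n_k+2}a_n^2=\sum_{n=1}^{n_k}a_n^2+a_{n_k+1}^2+a_{n_k+2}^2<\sum_{m=1}^k\lambda_m+\lambda_{k+1}=\sum_{m=1}^{k+1}\lambda_m$, which forces $n_{k+1}\ge n_k+2$.

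The main obstacle is not real difficulty but bookkeeping: confirming that the monotone-sorted form of the hypothesis coincides with the hypothesis of Proposition \ref{prop345} (the only delicate point being the harmless convention of whether $i=j$ is permitted in the maximum), and, on the self-contained route, checking clause (2) of Definition \ref{def3.4} for the greedy partition — which is precisely the step where the assumption $\max_{i,j}(a_i^2+a_j^2)\le\min_m\lambda_m$ is essential, guaranteeing that each ``carry'' is absorbed within a block of size at least two.
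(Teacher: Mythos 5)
The paper states Proposition \ref{prop4.1} without proof (it is quoted from \cite{CP}), so there is no in-paper argument to compare against; judged on its own, your proof is correct, and the self-contained greedy-partition verification is the part that carries the real content. Sorting the weights by adjacent transpositions and then checking Definition \ref{def3.4} for the maximal (greedy) partition works exactly as you describe: clause (1) is the maximality of $n_k$, the norm inequality in clause (2) follows from monotonicity of the sorted $a_n$, and the block-length inequality $n_{k+1}-n_k\ge 2$ is where $\max_{i,j}(a_i^2+a_j^2)\le\min_m\lambda_m$ enters. Two small points. First, your argument never actually needs the eigenvalues to be sorted — every use of the hypothesis is through $\min_m\lambda_m\le\lambda_{k+1}$ — which is fortunate, since the proposition's phrasing (``switching adjacent \emph{weights}'') and the STR procedure of Table \ref{stralg} only permute the norms, not the spectrum; you should leave $\{\lambda_m\}$ in place rather than route through the fully monotone hypotheses of Proposition \ref{prop345}. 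Second, your parenthetical derivation of $N\ge 2M$ is incomplete: from $Na_N^2\ge M(a_N^2+a_{N-1}^2)$ one only gets $N\ge M(1+a_{N-1}^2/a_N^2)$, which can be less than $2M$ when $a_{N-1}<a_N$. The correct route is to pair the sorted norms, giving $\sum_n a_n^2\le\lceil N/2\rceil\,(a_{N-1}^2+a_N^2)$, and compare with $\sum_m\lambda_m\ge M(a_{N-1}^2+a_N^2)$; this forces $\lceil N/2\rceil\ge M$ and, after ruling out $N=2M-1$ using $a_{N-1}>0$, yields $N\ge 2M$. Neither point affects the validity of the main argument.
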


Proposition \ref{prop4.1} suggests that by possibly switching adjacent norms, the vector norms and eigenvalue sequences can become Spectral Tetris ready. In particular, this allows PNSTC to construct a frame with sequences which are not Spectral Tetris ready. The following example demonstrates this process.

\begin{example}\label{strex}
We will construct a Spectral Tetris frame on a sequence of vector norms and eigenvalues which are not Spectral Tetris ready. In $\mathcal{H}_2$, we will construct a 6-element frame with the sequence of norms $\{a_n\}_{n=1}^6=\{\sqrt{3}, 2,\sqrt{3},1,2,\sqrt{2}\}$ and eigenvalues $\{\lambda_m\}_{m=1}^2=\{9,8\}$.

First note that these sequences are not Spectral Tetris ready in the current order. Also, $\sum_{n=1}^6a_n^2=17=\sum_{m=1}^2\lambda_m$ and 
\[\mbox{max }_{i,j \in \{1,\dots,6\}} \left( a_i^2+a_j^2\right) = 8 \leq 8 \mbox{ min }_{m \in \{1,\dots, 2\}}\lambda_m,\]
hence by Proposition \ref{prop4.1} these sequences can be made Spectral Tetris ready by switching adjacent weights/norms. Therefore we can construct such a frame by using PNSTC and possibly switching norms. 

Starting the PNSTC construction of this frame yields: $f_1:= \sqrt{3}\cdot e_1$ and $f_2:= 2\cdot e_1$. Now, for row one we have a weight of $9-\left(\sqrt{3}\right)^2-\left(2\right)^2= 2$ left to add. Now $\lambda=2 \not\geq a_3^2=3$; hence in PNSTC we would typically add a $2 \times 2$ submatrix next. However, $a_4^2 = 1 < x=2 < 3=a_3^2$ and by Lemma \ref{pnstclem}, such a $2 \times 2$ submatrix does not exist. But, if we switch $a_3$ and $a_4$  to now have the vector norm order $\{\sqrt{3}, 2,1, \sqrt{3},2,\sqrt{2}\}$ then we have that $\lambda=2\geq 1=a_3^2$ and hence $f_3:=1\cdot e_1$. Now we have a weight of $\lambda=1$ left to add into row one. Since $\lambda =1 \not\geq 3=a_4^2$ then we add a $2 \times 2$ submatrix to yield $f_4:=\sqrt{\frac{3}{5}}\cdot e_1 + \sqrt{\frac{12}{5}}\cdot e_2$ and $f_5:= \sqrt{\frac{2}{5}}\cdot e_1 - \sqrt{\frac{18}{5}}\cdot e_2$. Thus we have sufficient weight in row one. Now for row two we need to add $8- \left(\sqrt{\frac{12}{5}}\right)^2-\left(\sqrt{\frac{18}{5}}\right)^2 = 2$ and hence we let $f_6:=2\cdot e_2$. Thus we have constructed the frame:
$$\left[\begin{array}{cccccc}
\sqrt{3}&2&1&\sqrt{\frac{3}{5}}&\sqrt{\frac{2}{5}}&0\\
0&0&0&\sqrt{\frac{12}{5}}&-\sqrt{\frac{18}{5}}&2
\end{array}\right].$$
Notice this frame has orthogonal rows with the 

desired norms $\{a_n\}_{n=1}^6=\{\sqrt{3}, 2,1,\sqrt{3},2,\sqrt{2}\}$ and

eigenvalues $\{\lambda_m\}_{m=1}^2=\{9,8\}$.
\end{example}

As seen in Example \ref{strex}, Proposition \ref{prop4.1} is a modification of PNSTC which allows the algorithm to handle non-Spectral Tetris ready orderings. To use this re-ordering technique, we will simply insert Table 6 between lines ((1)(b)(ii)) and ((1)(b)(ii)(A)) of the PNSTC algorithm, at Table 2. We will define this re-ordering procedure Spectral Tetris Re-Ordering (STR). Through the use of STR, we no longer have to check if sequences are Spectral Tetris ready before using PNSTC, which alleviates a possibly very time consuming task. 

In Table 6 we provide the Spectral Tetris Re-Ordering algorithm from \cite{CP}, which can be used in conjunction with PNSTC, to allow for non-Spectral Tetris ready orderings of vector norms and spectrum.

\begin{table}[ht]
\centering
\framebox{
\begin{minipage}[ht]{4.8in}
\vspace*{0.3cm}
{\sc \underline{STR: Spectral Tetris Re-Ordering Procedure}}

\vspace*{0.4cm}

\noindent {\bf Parameters:}
\begin{itemize}
\item Dimension $M \in \mathbb{N}$.

\item Number of frame elements $N \in \mathbb{N}$.

\item Eigenvalues $\left(\lambda_m\right)_{m=1}^M$ and vector norms $\{a_n\}_{n=1}^N$ such that $\sum_{n=1}^Na_n^2 = \sum_{m=1}^M \lambda_m$ and max$_{i,j \in \{1,\dots,N\}}\left(a_i^2+a_j^2\right) \leq$ min$_{m \in \{1,\dots,M\}}\lambda_m$.
\end{itemize}

\noindent {\bf Algorithm:}
\begin{enumerate}
\item If $\lambda_m > a_{n+1}^2$, then
		\begin{enumerate}
		\item temp $:=a_n$.
		\item $a_{n+1}:=a_n$.
		\item $a_{n+1} :=$temp.
		\item Go to STC (1ai).
		\end{enumerate}
\item end.
\end{enumerate}

\end{minipage}
}
\vspace*{0.2cm}
\caption{Procedure for running STC on a non-spectral-tetris-ready ordering.}
\label{stralg}
\end{table}

Using SFR always results in a Spectral Tetris ready ordering of the vector norms and eigenvalues. Hence in Theorem \ref{thm3.4} our main constraint now is to ensure that our sequences satisfy the orthogonality conditions (1)(a,b) and (2)(a,b). Since these orthogonality constraints are tedious, we will now provide more easily checked sufficient conditions for when a Spectral Tetris fusion frame can be constructed. 

\begin{theorem}\label{thm4.2b}\cite{CP}
Consider $\mathcal{H}_M$ and a sequence of weights $w_1 \leq w_2 \leq \dots \leq w_D$ with corresponding subspace dimensions $\{d_i\}_{i=1}^D$, and a sequence of eigenvalues $\lambda_1 \leq \lambda_2 \leq \dots \leq \lambda_M$. Let the doubly indexed sequence $\{w_{i,j}\}_{i=1,j=1}^{D,d_i}$ represent $w_i$ each repeated $d_i$ times. Now PNSTC/STR will build a weighted fusion frame $\{\left(W_i,w_i\right)\}_{i=1}^D$, with $\mbox{dim}\left(W_i\right)=d_i$ and whose frame operator has the given spectrum if there exists an ordering $\{w_n\}_{n=1}^N$ of $\{w_{i,j}\}_{i=1,j=1}^{D,d_i}$ such that
\begin{enumerate}
\item $\sum_{n=1}^N w_n^2=\sum_{m=1}^M \lambda_m$
\item $w_{D-1,1}^2 + w_{D,1}^2 \leq \lambda_1$
\item If $w_l=w_{i,j},w_{l'}=w_{i',j'}$ with $i=i'$ and $l < l'$, then $\sum_{n=l}^{l'-1}w_n^2 \geq 2\lambda_M$.
\end{enumerate}
\end{theorem}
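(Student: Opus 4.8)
The plan is to deduce the statement from the necessary-and-sufficient Theorem \ref{thm3.4} by exhibiting one explicit admissible run of the algorithm, rather than re-analysing PNSTC/STR from scratch. Concretely, I would take the doubly-indexed multiset $\{w_{i,j}\}$, fix any ordering $\{w_n\}_{n=1}^N$ satisfying hypotheses (1)--(3), and pair it with the non-decreasing eigenvalue list $\lambda_1\le\cdots\le\lambda_M$. The claim to establish is that running PNSTC together with the re-ordering procedure STR of Table \ref{stralg} on this input terminates and outputs a Spectral Tetris frame $F=\{f_n\}_{n=1}^N$ which, partitioned by $J_i=\{\,n:\ w_n\text{ is a copy of }w_i\,\}$, generates the desired fusion frame $\{(W_i,w_i)\}_{i=1}^D$ with $W_i=\operatorname{span}\{f_n\}_{n\in J_i}$.

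First I would handle \emph{existence of the $2\times2$ blocks and termination}. Hypothesis (2), $w_{D-1,1}^2+w_{D,1}^2\le\lambda_1$, together with $w_1\le\cdots\le w_D$, says that \emph{any} pair of weights drawn from two distinct subspaces has sum of squares at most $\lambda_1\le\lambda_m$ for every $m$; the only pairs escaping this bound are two copies of $w_D$ sitting in the same $d_D$-dimensional subspace. Whenever PNSTC is about to insert a block $\hat A(x)$ into rows $m,m+1$, the leftover weight satisfies $0<x\le\lambda_m$, and STR performs finitely many adjacent swaps until the two feeding weights $a_n^2,a_{n+1}^2$ both dominate $x$ (the usual role of STR, cf.\ the computation in Example \ref{strex}); this is precisely condition (1) of Lemma \ref{pnstclem}, so the block exists. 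Here one must also note that STR never swaps two copies of the same $w_i$ into a common block: adjacent weights have cumulative-weight gap at most $\max_n w_n^2\le w_D^2\le\lambda_1<2\lambda_M$, while hypothesis (3) keeps same-subspace copies at cumulative-weight distance $\ge 2\lambda_M$. Hence every block actually formed involves weights from distinct subspaces and thus falls under hypothesis (2), and PNSTC/STR runs to completion, outputting a Spectral Tetris frame $F$ whose synthesis matrix has orthogonal rows squaring to $\lambda_1,\dots,\lambda_M$ and columns of norms $w_n$.

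Next comes \emph{orthogonality within each subspace}, the heart of the matter. Each Spectral Tetris vector is supported on one or two consecutive rows, say $f_n$ on $\{m(n),m(n)+1\}$ with $m(\cdot)$ non-decreasing; a bookkeeping argument shows that if the weight accumulated strictly between $f_l$ and $f_{l'}$ ($l<l'$) is at least $2\lambda_M$, then $m(l')\ge m(l)+2$, because that weight fills at least two complete rows, each of capacity $\lambda_k\le\lambda_M$. For $l,l'\in J_i$ hypothesis (3) supplies exactly $\sum_{n=l}^{l'-1}w_n^2\ge 2\lambda_M$, so $\operatorname{supp}f_l\cap\operatorname{supp}f_{l'}=\emptyset$ and $\langle f_l,f_{l'}\rangle=0$. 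Thus $\{f_n\}_{n\in J_i}$ is an orthogonal family of $d_i$ vectors each of norm $w_i$, i.e.\ a tight frame for $W_i$ with tight frame bound $w_i^2$ and $\dim W_i=d_i$. Theorem \ref{fffthm} (equivalently Theorem \ref{partitionthm}) then identifies $\{(W_i,w_i)\}_{i=1}^D$ as a fusion frame whose fusion frame operator shares the spectrum $\{\lambda_m\}_{m=1}^M$ of the frame operator of $F$. Equivalently, one can check directly that the permutation built above is Spectral Tetris ready (hypothesis (1) plus Definition \ref{def3.4}, obtained in the spirit of Proposition \ref{prop4.1}) and that its partition meets the orthogonality conditions (1)(a,b)--(2)(a,b) of Theorem \ref{thm3.4} — which assert precisely that two weights combined inside a common diagonal-block window carry different second indices $v\ne q$, again the disjoint-support conclusion just proved.

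I expect the main obstacle to be the bookkeeping in the orthogonality step: making rigorous the passage from ``the accumulated weight between $f_l$ and $f_{l'}$ exceeds $2\lambda_M$'' to ``$f_l$ and $f_{l'}$ lie at least two rows apart,'' while absorbing the off-by-one effects caused by $2\times2$ blocks straddling two rows and by the $n_i$ versus $n_i+1$ ambiguity in the partition of Definition \ref{def3.4}, and simultaneously verifying that the adjacent swaps performed by STR in the first step never bring two copies of the same $w_i$ close enough to violate this separation (which is exactly why the threshold in hypothesis (3) is $2\lambda_M$ rather than merely $\lambda_M$). Everything else — termination of PNSTC/STR, the fact that orthogonal equal-norm vectors form a tight frame, and the passage from frame to fusion frame — is routine given the results already established.
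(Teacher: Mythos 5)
The paper itself offers no proof of Theorem \ref{thm4.2b} --- it is quoted from \cite{CP} and followed only by discussion --- so there is nothing internal to compare against; I can only assess your outline on its merits. Your architecture is certainly the intended one: run PNSTC with the STR swaps, use hypothesis (2) to guarantee the $\hat{A}(x)$ blocks of Lemma \ref{pnstclem} exist, use hypothesis (3) to force same-subspace vectors onto disjoint supports, and then pass to the fusion frame via Theorem \ref{fffthm}. But the two places you flag as ``obstacles'' are precisely where the content of the theorem lives, and in both places the justification you give does not yet work.

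First, the separation step. You claim that $\sum_{n=l}^{l'-1}w_n^2\ge 2\lambda_M$ forces $m(l')\ge m(l)+2$ ``because that weight fills at least two complete rows.'' It does not fill complete rows: $f_l$ may be the second half of a $2\times 2$ block straddling rows $m(l)$ and $m(l)+1$, and $f_{l'-1}$ may be the first half of a block straddling rows $m(l)+1$ and $m(l)+2$, so if the supports of $f_l$ and $f_{l'}$ were to intersect, the naive row-capacity bound only gives $\sum_{n=l}^{l'}w_n^2\le\lambda_{m(l)}+\lambda_{m(l)+1}+\lambda_{m(l)+2}\le 3\lambda_M$, which is consistent with your hypothesis. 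To get down to the stated threshold $2\lambda_M$ you must use the finer structure of the algorithm: the portion of $\sum_{n=l}^{l'-1}w_n^2$ lying in the first row is only the leftover $x$ that triggered a $2\times 2$ block, hence is strictly less than a single squared weight, and the portion spilling into the last row is at most the overshoot $a_n^2+a_{n+1}^2-x$ of one block; both are controlled by hypothesis (2) via $w_{D-1}^2+w_D^2\le\lambda_1\le\lambda_M$. That accounting is the actual proof and is absent.

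Second, the stability of hypothesis (3) under STR. Conditions (1)--(3) are hypotheses on one fixed ordering, but the frame is built from the ordering \emph{after} STR has acted, and your partition $J_i$ must be read off the final ordering. Your one-line argument compares the effect of a single adjacent swap to the threshold $2\lambda_M$; but a single swap of $w_{l'-1}$ with $w_{l'}$ already decreases the separation sum $\sum_{n=l}^{l'-1}w_n^2$ by $w_{l'-1}^2$, and STR may perform many swaps over the course of the algorithm, so you need an argument that the cumulative erosion of the separation (and of condition (2)'s applicability to the pairs that actually get welded into a common $2\times 2$ block) stays within the slack built into the $2\lambda_M$ threshold. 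As written, neither of these two steps is closed, so the proposal is a correct plan rather than a proof.
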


Although the conditions in Theorem \ref{thm4.2b} are more relaxed than that of Theorem \ref{thm3.4}, finding an ordering of weights which achieves condition (3) is no small task. Intuitively, we would want to space like-weights as far apart as possible in our ordering in order to maximize $\sum_{n=l}^{l'-1}w_n^2$. When all of the subspaces have the same dimension then the ordering of the like-weights becomes obvious. We will start in this more obvious case and provide sufficient conditions for when PNSTC/STR can construct an equidimensional, tight fusion frame.

\begin{corollary}\cite{CP}\label{cormtb}
Consider $\mathcal{H}_M$ and a sequence of weights $w_1\leq w_2 \leq \dots \leq w_D$. PNSTC/STR can construct a tight weighted fusion frame with the given weights, all subspaces of dimension $k$, (eigenvalue $\lambda=\frac{k}{M}\sum_{i=1}^Da_i^2$) provided both of the following hold:
\begin{enumerate}
\item $w_{D-1}^2+w_D^2 \leq \lambda$
\item $\frac{k}{M} \leq \frac{1}{2}$
\end{enumerate}
\end{corollary}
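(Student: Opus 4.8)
The plan is to derive Corollary \ref{cormtb} directly from Theorem \ref{thm4.2b} by exhibiting one explicit ordering of the (repeated) weights and verifying the three hypotheses of that theorem. Since every subspace has dimension $k$, we have $N = kD$, and the doubly indexed sequence $\{w_{i,j}\}_{i=1,j=1}^{D,k}$ is simply each $w_i$ repeated $k$ times; moreover $\lambda = \frac{k}{M}\sum_{i=1}^D w_i^2$ (the $a_i$ in the statement being the weights $w_i$, as $N=kD$ vectors of norm $w_i$ occur $k$ times each), and $\lambda_1 = \cdots = \lambda_M = \lambda$ by tightness. The ordering I would use is the \emph{cyclic} one: write down $w_1, w_2, \dots, w_D$ and repeat this block a total of $k$ times, so that the entry in position $n$ is $w_{((n-1)\bmod D)+1}$. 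This is precisely the ordering that pushes consecutive copies of any single weight as far apart as possible, which is the feature rewarded by condition (3) of Theorem \ref{thm4.2b}.

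It then remains to check conditions (1)--(3) of Theorem \ref{thm4.2b} for this ordering. Condition (1) is automatic, since $\sum_{n=1}^N w_n^2 = k\sum_{i=1}^D w_i^2 = M\lambda = \sum_{m=1}^M \lambda_m$ by the defining formula for $\lambda$. Condition (2) reads $w_{D-1,1}^2 + w_{D,1}^2 \le \lambda_1$; here $w_{D-1,1} = w_{D-1}$ and $w_{D,1} = w_D$ are just the two largest weights and $\lambda_1 = \lambda$, so this is exactly hypothesis (1) of the Corollary.

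The only genuine content is condition (3): whenever $w_l, w_{l'}$ with $l<l'$ are two copies of the same weight $w_i$, we need $\sum_{n=l}^{l'-1} w_n^2 \ge 2\lambda_M = 2\lambda$. In the cyclic ordering all copies of $w_i$ occupy positions in one residue class modulo $D$, so $l'-l = mD$ for an integer $m\ge 1$ and the block from position $l$ to $l'-1$ is exactly $m$ full cycles; hence $\sum_{n=l}^{l'-1} w_n^2 = m\sum_{i=1}^D w_i^2 \ge \sum_{i=1}^D w_i^2$. It therefore suffices to have $\sum_{i=1}^D w_i^2 \ge 2\lambda$, and since $\lambda = \frac{k}{M}\sum_{i=1}^D w_i^2$ this is equivalent to $\frac{k}{M} \le \frac{1}{2}$, which is hypothesis (2). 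With all three conditions in hand, Theorem \ref{thm4.2b} produces the desired tight weighted fusion frame via PNSTC/STR, finishing the proof. The whole argument carries essentially no obstacle beyond the one observation that the cyclic ordering collapses the ordering-dependent inequality (3) into the clean bound $k/M \le 1/2$; once that is noted, the remaining steps are direct substitutions into Theorem \ref{thm4.2b}.
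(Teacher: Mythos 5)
Your proposal is correct and follows exactly the route the paper intends: the cyclic ordering $w_1,\dots,w_D,w_1,\dots,w_D,\dots$ ($k$ repetitions of the block) is precisely the arrangement prescribed in the paper's remark following Corollary \ref{cor4.4}, and your verification that conditions (1)--(3) of Theorem \ref{thm4.2b} reduce to $w_{D-1}^2+w_D^2\le\lambda$ and $k/M\le 1/2$ is the intended argument. No issues.
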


Next, we specify Theorem \ref{thm4.2b} to the case of equidimensional fusion frames and provide sufficient conditions for when PNSTC/STR can construct such fusion frames. 

\begin{corollary}\label{cor4.4}\cite{CP}
Consider $\mathcal{H}_M$, a sequence of weights $w_1 \leq w_2 \leq \dots \leq w_D$ and a sequence of eigenvalues $\lambda_1 \leq \lambda_2 \leq \dots \leq \lambda_M$. PNSTC/STR can construct a weighted fusion frame $\{\left(W_i,w_i\right)\}_{i=1}^D$, all subspaces dimension $k$, and with the given spectrum provided all of the following hold:
\begin{enumerate}
\item $k\sum_{n=1}^D w_n^2 = \sum_{m=1}^M \lambda_m$
\item $w_{D-1}^2 +w_D^2 \leq \lambda_1$
\item $\sum_{n=1}^Dw_n^2 \geq 2\lambda_M$
\end{enumerate}
\end{corollary}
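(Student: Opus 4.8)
The plan is to obtain Corollary \ref{cor4.4} as a direct specialization of Theorem \ref{thm4.2b} to the equidimensional case $d_1 = d_2 = \cdots = d_D = k$, together with one explicit choice of ordering of the weights. Set $N = \sum_{i=1}^D d_i = kD$, and let $\{w_{i,j}\}_{i=1,j=1}^{D,k}$ be the doubly-indexed sequence with $w_{i,j} = w_i$ for every copy $j = 1,\dots,k$. I will order these $N$ weights in round-robin fashion,
\[
w_1, w_2, \dots, w_D,\ w_1, w_2, \dots, w_D,\ \dots,\ w_1, w_2, \dots, w_D
\]
($k$ complete passes), so that the element in position $n$ is the $\lceil n/D\rceil$-th copy of $w_i$ with $i = ((n-1)\bmod D)+1$. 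It then remains to check that this ordering satisfies conditions (1)--(3) of Theorem \ref{thm4.2b}; once that is done, Theorem \ref{thm4.2b} produces the desired weighted fusion frame, and we are finished.

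Conditions (1) and (2) of Theorem \ref{thm4.2b} are immediate translations of the hypotheses of the Corollary. For (1), $\sum_{n=1}^N w_n^2 = \sum_{i=1}^D \sum_{j=1}^k w_i^2 = k\sum_{i=1}^D w_i^2$, which equals $\sum_{m=1}^M \lambda_m$ precisely by hypothesis (1). For (2), since all $k$ copies of the weight of subspace $i$ equal $w_i$, we have $w_{D-1,1} = w_{D-1}$ and $w_{D,1} = w_D$, so $w_{D-1,1}^2 + w_{D,1}^2 \leq \lambda_1$ is exactly hypothesis (2). (Both statements also use that $w_1 \leq \cdots \leq w_D$ and $\lambda_1 \leq \cdots \leq \lambda_M$, which are part of the Corollary's setup.)

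The one step that needs a genuine, if short, argument is condition (3) of Theorem \ref{thm4.2b}. Suppose positions $l < l'$ in the round-robin ordering carry weights from the same subspace index $i$, i.e.\ $w_l = w_{i,j}$ and $w_{l'} = w_{i,j'}$ for some $j \neq j'$. By construction this forces $l \equiv l' \pmod D$, hence $l' - l$ is a positive multiple of $D$ and in particular $l' - 1 \geq l + D - 1$. The $D$ consecutive positions $l, l+1, \dots, l+D-1$ form a complete residue system modulo $D$, so they carry exactly one copy of each of $w_1,\dots,w_D$ and therefore $\sum_{n=l}^{l+D-1} w_n^2 = \sum_{i=1}^D w_i^2$. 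Since every weight is strictly positive, extending the range can only increase the sum, so $\sum_{n=l}^{l'-1} w_n^2 \geq \sum_{n=l}^{l+D-1} w_n^2 = \sum_{i=1}^D w_i^2 \geq 2\lambda_M$, the last inequality being hypothesis (3). This verifies condition (3) for every such pair $l<l'$, completing the application of Theorem \ref{thm4.2b}.

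I do not expect any serious obstacle: the Corollary is essentially a clean specialization, and the only place requiring care is the bookkeeping in condition (3) — namely, confirming that in the round-robin ordering any two copies of a single subspace's weight sit at least $D$ positions apart, and that positivity of the weights lets one bound an arbitrary gap below by the sum over one full cycle $w_1,\dots,w_D$. If one wanted to be even more careful, one would also note that the equidimensional round-robin ordering is genuinely an ordering of $\{w_{i,j}\}$ (a bijection of positions with $(\text{subspace},\text{copy})$ pairs), which is what Theorem \ref{thm4.2b} asks for.
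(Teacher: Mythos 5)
Your proposal is correct and is essentially the argument the paper intends: the survey gives no explicit proof (it cites \cite{CP}), but the remark immediately following Corollary \ref{cor4.4} prescribes exactly your round-robin arrangement of the $k$ copies of each weight, and your verification of conditions (1)--(3) of Theorem \ref{thm4.2b} for that ordering --- in particular the observation that any two copies of $w_i$ are at least $D$ positions apart, so the intervening block contains a full cycle summing to $\sum_{i=1}^D w_i^2 \geq 2\lambda_M$ --- is precisely the content of the specialization.
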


\begin{remark}
To construct the fusion frame in Corollary \ref{cormtb} and Corollary \ref{cor4.4}, write each weight $w_i$ repeated $k$ times and arrange these weights as follows: $\{a_1,\dots,a_m,a_1,\dots,a_m,\dots\}$. Then proceed to use PNSTC/STR on this collection of norms. We provide this arrangement of the sequence of weights because it guarantees that such a fusion frame can be constructed so long as all of the conditions in the respective corollary are met. However, other arrangements are possible.
\end{remark}

Next we provide an example to help illustrate the Spectral Tetris construction of an equidimensional, weighted fusion frame and to do this we will utilize Corollary \ref{cor4.4}. 

\begin{example}
In $\mathcal{H}_5$ we will construct a weighted fusion frame with 9 two-dimensional subspaces with weights $\{w_i\}_{i=1}^{9}=\{1,1,1,1,\sqrt{2},\sqrt{2},\sqrt{3},\sqrt{3},2\}$ respectively and spectrum $\{\lambda_m\}_{m=1}^5=\{7,7,7,7,8\}$. Notice that conditions (1), (2), and (3) of Corollary \ref{cor4.4} are met. Indeed:
\begin{enumerate}
\item $k\sum_{i=1}^9w_i^2=2\left(18\right)=36=\sum_{i=1}^5\lambda_m$
\item $w_{D-1}^2+w_D^2=7\leq 7=\lambda_1$
\item $\sum_{i=1}^9w_i^2=18\geq 16=2\lambda_5$
\end{enumerate}
and hence such a construction is possible. 

To construct our fusion frame we first construct the corresponding Spectral Tetris frame via PNSTC. First we write each norm $k$ times and arrange these weights in the following order:
\[\{1,1,1,1,\sqrt{2},\sqrt{2},\sqrt{3},\sqrt{3},2,1,1,1,1,\sqrt{2},\sqrt{2},\sqrt{3},\sqrt{3},2\}.\]

Using PNSTC we will construct the following Spectral Tetris frame:
$$\left[\begin{array}{cccccccccccccccccc}
1&1&1&1&\sqrt{2}&\sqrt{\frac{2}{3}}&\sqrt{\frac{1}{3}}&0&0&0&0&0&0&0&0&0&0&0\\
0&0&0&0&0&\sqrt{\frac{4}{3}}&-\sqrt{\frac{8}{3}}&\sqrt{3}&0&0&0&0&0&0&0&0&0&0\\
0&0&0&0&0&0&0&0&2&1&1&1&0&0&0&0&0&0\\
0&0&0&0&0&0&0&0&0&0&0&0&1&\sqrt{2}&\sqrt{2}&1&1&0\\
0&0&0&0&0&0&0&0&0&0&0&0&0&0&0&\sqrt{2}&-\sqrt{2}&2
\end{array}\right].$$

Next, grouping the frame vectors in the following way will yield our desired fusion frame:
\[W_1=\mbox{ span}\{f_1,f_{10}\}; W_2=\mbox{ span}\{f_2,f_{11}\};W_3=\mbox{ span}\{f_3,f_{12}\};\]
\[W_4=\mbox{ span}\{f_4,f_{13}\};W_5=\mbox{ span}\{f_5,f_{14}\};W_6=\mbox{ span}\{f_6,f_{15}\};\]
\[W_7=\mbox{ span}\{f_7,f_{16}\};W_8=\mbox{ span}\{f_8,f_{17}\};W_9=\mbox{ span}\{f_9,f_{18}\};\]
where each subspace is two-dimensional, has the respective desired weight and the spectrum of the fusion frame operator is $\{7,7,7,7,8\}$ as desired. 
\end{example}

\begin{remark}\cite{CP}
In order for PNSTC to build a desired fusion frame, a complex relationship
among partial sums of weights, partial sums of eigenvalues, and dimensions of our subspaces must be satisfied according to Theorem \ref{thm3.4}. We simplified this relationship in Theorem \ref{thm4.2b} and its corollaries to achieve concrete constructions via PNSTC/STR. While these extra assumptions still allow a variety of fusion frames to be created, they are best suited for fusion frames with relatively 
flat spectrum. For example, (1) and (3) of Corollary \ref{cor4.4} imply
\[ \frac{\sum_{m=1}^M\lambda_m}{k} \geq 2\lambda_M,\]
and this can clearly be manipulated to 
\[ \frac{\mbox{Average }\left(\{\lambda_m\}_{m=1}^M\right)}{2\lambda_M} \geq \frac{k}{M}.\]

Hence if we desire PNSTC/STR to guarantee the construction of fusion frames with relatively large subspaces, our prescribed frame operator must have a relatively 
flat spectrum. However, the conditions used here are of the correct order for practical applications. That is, we generally do not work with large subspaces or with eigenvalues for the frame operator which are very spread out.
\end{remark}


\section{Concluding Remarks}

Before the development of Spectral Tetris there were no general methods for constructing frames nor fusion frames and this proved to be a problem since both frames and fusion frames are used in numerous applications where explicit frames with desired properties are necessary. In the present paper we have seen the complete development of Spectral Tetris. From being restricted to only constructing unit norm tight frames to eventually being generalized and completely characterized to construct frames with prescribed spectrum and vector norms. Then we proceeded to see the development of Spectral Tetris fusion frames and have found numerous classifications and construction techniques for constructing fusion frames based on Spectral Tetris frames. This ultimately developed into a complete characterization of Spectral Tetris fusion frames. Throughout we have not only given the complete characterization of Spectral Tetris frames and Spectral Tetris fusion frames, but we have given numerous easily check-able sufficient conditions for using Spectral Tetris. Also, for each construction method we have given an easily implementable algorithm and  numerous examples to help illustrate the constructions. Throughout our paper we only addressed finite frame constructions; however, infinite dimensional Spectral Tetris frame constructions do exist and have been studied in \cite{BJ}. Overall, we have seen that Spectral Tetris provides an easily implementable construction algorithm for sparse frames and  sparse fusion frames with desired properties.  



\end{document}